\documentclass[11pt, reqno]{amsart}
\usepackage{fullpage}
\usepackage[OT1]{fontenc}
\usepackage[utf8]{inputenc}
\usepackage{lmodern}

\usepackage{amsmath,amssymb,amsthm,amsfonts,amsaddr,mathtools,bm}
\usepackage[authoryear]{natbib}
\usepackage{graphicx}
\usepackage{booktabs,array,enumitem,microtype}
\usepackage[colorlinks,citecolor=blue,urlcolor=blue]{hyperref}
\usepackage[nameinlink,noabbrev]{cleveref}

\hypersetup{
  pdftitle={Sharp Asymptotic Theory of Maximum Likelihood Estimation for Gaussian Processes with an RBF Kernel},
  pdfsubject={Sharp Asymptotics of MLEs for GP with an RBF Kernel}
}

\allowdisplaybreaks[2]
\setlist[enumerate]{leftmargin=*,itemsep=3pt,topsep=5pt}
\setlist[itemize]{leftmargin=*,itemsep=3pt,topsep=5pt}
\numberwithin{equation}{section}

\theoremstyle{plain}
\newtheorem{theorem}{Theorem}[section]
\newtheorem{proposition}[theorem]{Proposition}
\newtheorem{corollary}[theorem]{Corollary}
\newtheorem{lemma}[theorem]{Lemma}

\theoremstyle{definition}
\newtheorem{definition}[theorem]{Definition}
\newtheorem{assumption}{Assumption}
\newtheorem{remark}[theorem]{Remark}

\crefname{assumption}{assumption}{assumptions}
\Crefname{assumption}{Assumption}{Assumptions}

\newcommand{\R}{\mathbb R}
\newcommand{\N}{\mathbb N}
\newcommand{\E}{\mathbb E}
\newcommand{\PP}{\mathbb P}
\newcommand{\Id}{I}
\newcommand{\T}{\top}
\newcommand{\tr}{\operatorname{tr}}
\newcommand{\diag}{\operatorname{diag}}

\newcommand{\Cov}{\operatorname{Cov}}
\newcommand{\Var}{\operatorname{Var}}
\newcommand{\op}{\mathrm{op}}
\newcommand{\Frob}{\mathrm F}
\newcommand{\TV}{\operatorname{TV}}
\newcommand{\Aff}{\operatorname{Aff}}

\newcommand{\calI}{\mathcal I}
\newcommand{\calX}{\mathcal X}

\newcommand{\spat}{\mathrm{sp}}
\newcommand{\norm}[1]{\left\lVert#1\right\rVert}

\newcommand{\thetan}{\widehat\theta_n}

\DeclareMathOperator{\intt}{int}
\DeclareMathOperator{\chol}{chol}

\begin{document}

\title[Sharp Asymptotics of MLEs for RBF Gaussian Processes]{Sharp Asymptotic Theory of Maximum Likelihood Estimation for Gaussian Processes with an RBF Kernel}

\author{Ameer Qaqish$^1$, Sudipto Banerjee$^2$, and Didong Li$^1$}
\address{$^1$Department of Biostatistics, University of North Carolina at Chapel Hill\\
$^2$Department of Biostatistics, University of California, Los Angeles}
\email{ameer@unc.edu, sudipto@ucla.edu, didongli@unc.edu}

%\subjclass[2020]{Primary 62F12, 62M30, 60G15; Secondary 62F10}
%\keywords{Gaussian process, RBF kernel, maximum likelihood, fixed-domain asymptotics, minimax optimal}

\begin{abstract}
Gaussian processes (GPs) are widely used across machine learning, spatial statistics, time-series analysis, optimization, Bayesian statistics, and scientific applications. A central component of a GP model is its kernel, which is typically specified through a parametric family. Among the most widely used choices is the radial basis function (RBF), also known as the squared exponential or Gaussian kernel, owing to its simple form, smoothness, and flexibility. In practice, the kernel parameters are routinely estimated by the maximum likelihood estimators (MLEs), as implemented by standard GP software. Despite this widespread use, the asymptotic behavior of the MLEs remains poorly understood under fixed-domain asymptotics, even for the RBF kernel. The main difficulty arises from the increasingly strong dependence among densely sampled observations and the nonlinear dependence of the covariance matrix on the kernel parameters. In this paper, we address this gap by providing, to the best of our knowledge, the first complete asymptotic characterization of the joint MLE of the spatial variance, lengthscale, and nugget variance under fixed-domain asymptotics. We establish consistency, derive convergence rates for all three parameters, prove joint asymptotic normality, and show that these rates are minimax optimal.
\end{abstract}

\maketitle

\vspace{-5pt}
\tableofcontents

\section{Introduction}\label{sec:introduction}

Gaussian processes (GPs) are a fundamental class of probabilistic models with broad applications across statistics, machine learning, and the sciences. In machine learning, GPs provide flexible nonparametric models for regression, classification, and nonlinear dimension-reduction methods~\citep{williams2006gaussian}. In spatial statistics and time-series analysis, GPs provide a natural framework for modeling spatial~\citep{stein1999interpolation, cressie2011statistics, gelfand2017bayesian, banerjee2025hierarchical} and temporal dependence~\citep{roberts2013gaussian}. They are also widely used as surrogate models in Bayesian optimization \citep{garnett2023bayesian}, and, more generally, as flexible prior distributions over unknown functions in Bayesian inference~\citep{ghosal2017fundamentals}. Applications have further extended to biomedical problems, such as spatial transcriptomics~\citep{svensson2018spatialde,weber2023nnsvg}. 

A key component of a GP is its covariance function, also known as the kernel function, which determines the dependence and smoothness of the underlying random function \citep{williams2006gaussian}. Among the most commonly used choices is the radial basis function (RBF) kernel, also known as the squared exponential or Gaussian kernel. Its popularity stems from its simple form and its smoothness. In this paper, we consider the RBF kernel
\begin{equation}\label{eq:rbf-intro}
K_{\sigma^2,l}(x,x')
\coloneqq 
\sigma^2
\exp\left(-\frac{\|x-x'\|^2}{2l^2}\right),
\end{equation}
where $\sigma^2$ is the spatial variance, which controls the magnitude of spatial variation, and $l$ is the lengthscale, which determines the scale over which the process remains correlated. We observe the GP at $n$ locations with additive, independent Gaussian measurement errors of variance $\tau^2$, known as the nugget; the precise observation model is given in \Cref{sec:gp-background}. In practice, the parameter $\theta=(\sigma^2,l,\tau^2)$ is routinely estimated by maximizing the Gaussian likelihood, leading to the maximum likelihood estimator (MLE). This is the standard approach implemented in widely used GP software, including \texttt{GPML}~\citep{rasmussen2010gaussian} and \texttt{scikit-learn}~\citep{pedregosa2011scikit} in \textsf{Python} and \texttt{DiceKriging}~\citep{roustant2012dicekriging} and \texttt{GPfit}~\citep{macdonald2015gpfit} in \textsf{R}.

However, the theoretical understanding of the MLEs is considerably less developed than their widespread practical use would suggest~\citep{stein1999interpolation}. Throughout this paper, we focus on fixed-domain asymptotics, where observations become increasingly dense within a fixed bounded domain as the sample size increases. Several recent results address related aspects of this problem. \citet{qaqish2025identifiability} established the identifiability of all parameters of the RBF kernel, showing that consistent estimation of these parameters is at least possible in principle. Identifiability alone, however, does not establish the consistency or asymptotic distribution of any particular estimator. Moving from identifiability to estimation, \citet{loh2000estimating} established consistency of sieve MLEs for smooth Gaussian random fields, including the RBF case, under regular-grid sampling and a restricted parameterization. More recently, \citet{loh2026consistent} constructed consistent estimators of RBF parameters from noiseless scattered observations using local differences. These estimators, however, are not MLEs. \citet{xu2017maximum} studied likelihood estimation for a smooth Gaussian random field model when the observations are generated from a deterministic function rather than from the assumed GP. From a different perspective, \citet{karvonen2023maximum} showed that lengthscale MLE can be ill-posed in noiseless GP regression. Despite these developments, a general asymptotic theory for the joint MLE of the RBF spatial variance, lengthscale, and nugget variance remains unavailable.

Consequently, even for the RBF kernel, a basic statistical question remains unresolved: \emph{what is the asymptotic behavior of the MLE that practitioners actually compute?} In particular, it is not even known in general whether the joint MLE of $\theta=(\sigma^2,l,\tau^2)$ is consistent, at what rates its three components converge, or whether it admits an asymptotically normal distribution. The difficulty is fundamentally different from standard likelihood problems based on independent or weakly dependent observations. As the sampling locations become increasingly dense, the observations arise from a single realization of the underlying GP and become strongly dependent. At the same time, the covariance matrix is a nonlinear function of the kernel parameters, and its dimension and dependence structure both change with $n$. Standard likelihood arguments based on sums of approximately independent contributions therefore do not apply directly.

In this paper, we resolve this problem for the RBF kernel. We establish consistency of the joint MLE of $\theta$. We further derive the convergence rates of all three parameters, characterize the Fisher information, and establish joint asymptotic normality. In particular, we show that the MLEs of the spatial variance $\sigma^2$, lengthscale $l$, and nugget variance $\tau^2$ converge at rates 
\[\left(\frac{\log n}{\log\log n}\right)^{-p/2},\quad \left(\frac{\log n}{\log\log n}\right)^{-(p+2)/2},\quad n^{-1/2},\] respectively, where $p$ is the dimension of the input domain. We further establish matching minimax lower bounds and show that the MLE attains these bounds uniformly over the parameter space. Thus, the above three convergence rates are minimax optimal. The theory accommodates irregular observation locations under a polynomial coverage condition and does not require a regular grid or a minimum separation condition. Numerical experiments complement the theoretical results and examine the convergence rates empirically.

Beyond the specific asymptotic results, a key methodological contribution of this work is a general strategy for statistical inference with strongly dependent observations. The central idea is to construct a function of the observations that is a lower triangular linear transformation of i.i.d. standard normals, while controlling the approximation error and showing that it vanishes sufficiently fast. This converts a difficult dependent-data problem into one that can be analyzed using tools closer to those for independent observations. In the RBF setting studied here, the analyticity of the kernel allows us to construct such a transformation through a growing collection of Taylor coefficients, whose approximation errors decay rapidly with the sample size. The resulting representation makes the information carried by different kernel parameters explicit and provides the key mechanism for establishing consistency, convergence rates, and asymptotic normality of the MLEs. More broadly, this dependence-to-independence transformation provides a potentially useful strategy for asymptotic inference in other strongly dependent models whenever an appropriate latent representation can be recovered with sufficiently small error.

The remainder of the paper is organized as follows. \Cref{sec:back} reviews background on fixed-domain GP inference and related work on kernel parameter inference. \Cref{sec:theory} presents the main theoretical results for the joint MLEs. \Cref{sec:simulation} reports simulation studies supporting the rates. \Cref{sec:discussion} discusses implications, limitations, and directions for future work. All proofs and additional experimental details are provided in the Appendix.

\section{Background and related work}\label{sec:back}
\subsection{Background}\label{sec:gp-background}

\begin{definition}[GP]
Let $\mathcal{X}\subset\mathbb{R}^p$ be an input domain. A stochastic process
$\{F(x):x\in\mathcal{X}\}$ is a GP, denoted by $F\sim \mathrm{GP}(m,K)$, if, for every finite collection
$x_1,\ldots,x_n\in\mathcal{X}$, the random vector
\[
\bigl(F(x_1),\ldots,F(x_n)\bigr)^\top\sim N(\mu,\Sigma),
\]
where $\mu=[m(x_1),\cdots,m(x_n)]^\top$ and $\Sigma_{ij}=K(x_i,x_j)$. $m$ is called the mean function while $K$ is called the covariance function or kernel function, which together uniquely determine the GP.
\end{definition}

Throughout this paper, we assume that the mean function $m$ is known to be zero and focus
on inference for the kernel parameters of RBF as defined in \eqref{eq:rbf-intro}. We observe noisy outcomes
\begin{equation}\label{eqn:observation}
Y_{n,i}=F(x_{n,i})+\varepsilon_{n,i},
\qquad i=1,\ldots,n,
\end{equation}
at deterministic locations $x_{n,1},\ldots,x_{n,n}\in\mathcal{X}$, where
$\varepsilon_{n,i}\stackrel{\mathrm{iid}}{\sim}N(0,\tau^2)$ and the errors
are independent of $F$. We first develop the theory for deterministic designs satisfying a weak coverage condition;
random designs, including independent sampling from densities bounded away from zero on a region of $\mathcal{X}$, are treated in \Cref{cor:random}. For the deterministic-design formulation, the covariance matrix of the observations is
\begin{equation*}
C_n(\theta)
=
\sigma^2 R_n(l)+\tau^2 I_n,
\qquad
\theta=(\sigma^2,l,\tau^2),
\end{equation*}
where
\[
[R_n(l)]_{ij}
=
\exp\left(
-\frac{\|x_{n,i}-x_{n,j}\|^2}{2l^2}
\right).
\]
Thus,
\[
Y_n=(Y_{n,1},\ldots,Y_{n,n})^\top
\sim N_n(0,C_n(\theta)).
\]

We study fixed-domain asymptotics: the domain $\mathcal{X}\subset\mathbb{R}^p$ remains
fixed and bounded while the number of observation locations increases. This regime differs
from increasing-domain asymptotics, in which the observation region itself
expands with $n$~\citep{stein1999interpolation}. Under fixed-domain
asymptotics, increasingly many observations correspond to a single realization
of $F$ over the same bounded region, resulting in strong dependence among
nearby observations. Whether kernel parameters can be consistently
estimated in this regime depends critically on the kernel family, which
motivates the identifiability and estimation literature reviewed next.

\subsection{Related work on RBF parameter inference}
\label{sec:related-estimation}

Parameter estimation under fixed-domain asymptotics differs substantially
across kernel families. A fundamental issue is whether different kernel parameters induce equivalent or orthogonal Gaussian measures.
Parameters that can be distinguished in this sense are commonly referred
to as microergodic parameters~\citep{stein1999interpolation}. For the RBF kernel, \citet{qaqish2025identifiability} showed that all RBF parameters
are identifiable. This result, however, concerns the
distinguishability of the induced Gaussian measures and does not establish the behavior of a particular estimator.

Several works have studied RBF parameter inference. \citet{loh2000estimating} considered an RBF GP observed on a regular lattice and derived an explicit likelihood using the special structure of the resulting correlation matrix. They established strong consistency of sieve MLEs for the kernel parameters. This was an important early consistency result for likelihood-based estimation of smooth kernels but its scope is substantially different from the problem considered here. In particular, the analysis relies on a highly structured regular design and on a sieve likelihood construction tailored to that setting. In contrast, the MLE routinely used in GP software is obtained by directly maximizing the Gaussian likelihood over a prescribed parameter space, and observations in applications are rarely restricted to a regular lattice. Our goal is to characterize this exact MLE under substantially more general observation designs.

More recently, \citet{loh2026consistent} constructed consistent estimators of the spatial variance and lengthscale of the RBF kernel from noiseless observations at scattered, space-filling locations. The result substantially relaxes the regular-grid requirement and establishes directly that these parameters can be consistently recovered. However, the estimators are specifically constructed for the theoretical analysis and are not obtained by maximizing the Gaussian likelihood. Moreover, the observations are noiseless, corresponding to $\tau^2=0$ in our notation. Measurement error is present in real data applications, and introducing an unknown nugget changes both the likelihood and the asymptotic problem. Thus, while \citet{loh2026consistent} establishes the existence of consistent estimators, it does not determine whether the routinely used MLE is consistent or characterize its asymptotic behavior.

\citet{xu2017maximum} studied a different question motivated by computer experiments. There, the observations are values of a deterministic function $f$, while an RBF GP is used as a statistical model for interpolation. Consequently, there is no true covariance parameter $\theta_0$ generating the observations, and consistency toward a true RBF parameter is not the relevant question. For regularly spaced observations on $[0,1]$ with the lengthscale fixed, they showed that the MLE of the spatial variance converges to zero when $f$ is constant and diverges to infinity when $f$ is linear; they also derived asymptotic orders for related polynomial examples. Their results therefore characterize the behavior of likelihood estimation under this deterministic-function formulation rather than estimation of the kernel parameters of a correctly specified RBF GP. Indeed, \citet{xu2017maximum} explicitly noted that establishing estimation theory when the GP model is correctly specified remained a substantially different and difficult problem.

Finally, \citet{karvonen2023maximum} studied the well-posedness of MLE-based GP regression rather than consistency of covariance parameter estimators. They showed that, in the noiseless setting, estimating the lengthscale by maximum likelihood can make the resulting predictive distribution non-Lipschitz with respect to perturbations of the observed data. In particular, for certain data configurations the likelihood drives the estimated lengthscale to infinity. This notion of ill-posedness does not contradict the consistency result established in this paper. Their result concerns sensitivity of the fitted predictor to perturbations of a fixed noiseless data set, whereas our results concern the probabilistic behavior of the MLE when the data are generated from an RBF GP with noise as $n$ increases. The nugget is particularly important: it keeps the covariance matrices uniformly nonsingular and excludes the noiseless setting in which their negative result occurs.

Taken together, the existing literature establishes identifiability of the RBF parameters, consistency of specially constructed estimators, consistency of sieve MLEs under highly structured designs, and both positive and negative results for MLEs in other formulations. None of these results characterizes the joint MLE of $\theta=(\sigma^2,l,\tau^2)$ under the observation model in \eqref{eqn:observation}. In particular, consistency, convergence rates, asymptotic normality, and minimax optimality of this MLE remain unresolved. These are the questions addressed in \Cref{sec:theory}.

\section{Theory}\label{sec:theory}

This section states the model and assumptions in \Cref{sec:setup}, establishes the asymptotic behavior in \Cref{sec:results} and minimax optimality of the MLE in \Cref{sec:minimax}, with a few useful corollaries in \Cref{sec:covariance}, and gives an outline of the proof with some key ideas in \Cref{sec:outline}. Complete proofs are deferred to the Appendices \ref{app:statements}, \ref{app:proof-main}, and \ref{app:lemma-proofs}.

\subsection{Problem setup, assumptions, and notations}\label{sec:setup}

Recall from \Cref{sec:gp-background} that
\[
Y_n\sim N_n(0,C_n(\theta)),\qquad
C_n(\theta)=\sigma^2R_n(l)+\tau^2 I_n,
\qquad
\theta=(\sigma^2,l,\tau^2).
\]
For notational simplicity, in the theoretical analysis we write
\[
(s,l,t)=(\sigma^2,l,\tau^2),
\]
so that $\theta=(s,l,t)\in\Theta\subset (0,\infty)^3$ and $C_n(\theta)=sR_n(l)+tI_n$. We also write $K_\theta=K_{s,l}$ for the RBF kernel \eqref{eq:rbf-intro},
   which does not depend on $t$.
Let $\theta_0=(s_0,l_0,t_0)$ denote the true parameter. We estimate $\theta$ by maximizing the Gaussian likelihood over $\Theta$. Ignoring an additive constant, the log likelihood is
\begin{equation}\label{eq:likelihood}
l _n(\theta)
=
-\frac12\log \det C_n(\theta)
-\frac12Y_n^\top C_n(\theta)^{-1}Y_n,
\end{equation}
and the MLE is
\begin{equation*}
\widehat\theta_n
\in\operatorname*{arg\,max}_{\theta\in\Theta}l _n(\theta).
\end{equation*}

We now state the two assumptions of our theoretical analysis.

\begin{assumption}[Positive compact parameter set]\label{ass:parameter}
The parameter space is
\[
\Theta=[s_-,s_+]\times[l_-,l_+]\times[t_-,t_+]
\Subset(0,\infty)^3,
\]
i.e., $\Theta$ is compactly contained in $(0,\infty)^3$, and assume that $\theta_0\in\operatorname{int}(\Theta)$.
\end{assumption}

For a fixed closed cube $Q\subset\mathcal{X}$, define its fill distance by 
\[
h_n(Q)=
\sup_{x\in Q}\min_{1\le i\le n}\|x-x_{n,i}\|.
\]

\begin{assumption}[Polynomial coverage]\label{ass:coverage}
There exist a fixed closed cube $Q\subset\mathcal{X}$ with positive side length and constants $C_h<\infty$ and $\kappa>0$ such that, for all sufficiently large $n$,
\begin{equation}\label{eq:coverage}
h_n(Q)\le C_h n^{-\kappa}.
\end{equation}

\end{assumption}

\Cref{ass:coverage} requires the observation locations to become sufficiently dense on at least one fixed region of the domain, but imposes no minimum spacing condition. In particular, irregular, clustered, and repeated observation locations are allowed. Necessarily $\kappa \leq 1/p$, since $n$ balls of radius $h_n(Q)$ must cover a set of positive $p$-dimensional volume.

We finally introduce the Fisher information and the normalization used in our asymptotic results. Let subscripts on $C_n$ denote parameter derivatives. At $\theta_0$, define the Fisher information matrix
\begin{equation}\label{eq:fisher}
 [\calI_n(\theta_0)]_{ab}
 =\frac12\tr\left(
 C_{n,0}^{-1}C_{n,a}(\theta_0)
 C_{n,0}^{-1}C_{n,b}(\theta_0)\right),
 \quad a,b\in\{s,l,t\},
\end{equation}
where $C_{n,0}=C_n(\theta_0)$. For $n\ge16$, let
\begin{equation*}
 b_n=\frac{\log n}{\log\log n},~~
 D_n=\diag(b_n^{p/2},b_n^{p/2+1},\sqrt n),~~
 J_n=D_n^{-1}\calI_n(\theta_0)D_n^{-1},~~\epsilon_n=b_n^{-p/2}.
\end{equation*}
Thus, $J_n$ is the normalized Fisher information, with the normalization $D_n$ corresponding to the convergence rates of the three kernel parameters that will be established below.

Throughout this article, constants denoted by $c,C$ may change between displays and can depend on the fixed parameter rectangle, dimension, bounded domain, cube, and coverage constants, but not on $n$. For symmetric matrices, $\preceq$ is the Loewner order. The symbols $\norm{\cdot}_{\op}$ and $\norm{\cdot}_{\Frob}$ denote operator and Frobenius norms. For a multiindex $\alpha\in\N_0^p$, write
$|\alpha|=\sum_j\alpha_j$, $\alpha!=\prod_j\alpha_j!$, and
$x^\alpha=\prod_jx_j^{\alpha_j}$.

\subsection{MLE asymptotics}\label{sec:results}
Our main result is the following.
\begin{theorem}[MLE asymptotics]\label{thm:rbf}
Under \Cref{ass:parameter,ass:coverage}, the following assertions hold.
\begin{enumerate}[label=(\roman*)]
\item \textbf{Fisher information.} The normalized Fisher information is uniformly nondegenerate:
\begin{equation}\label{eq:information-bounds}
 c\Id_3\preceq J_n\preceq C\Id_3
\end{equation}
for all sufficiently large $n$. Its spatial block $J_{n,\spat}$ satisfies
$c\Id_2\preceq J_{n,\spat}\preceq C\Id_2$, and
\begin{equation}\label{eq:information-block}
 J_n=
 \begin{pmatrix}
 J_{n,\spat}&o(1)\\
 o(1)&(2t_0^2)^{-1}+o(1)
 \end{pmatrix}.
\end{equation}
\item \textbf{Consistency and convergence rates.} The MLE $\widehat{\theta}_n$ admits the expansion
\begin{equation}\label{eq:mle-expansion}
 D_n(\thetan-\theta_0)
 =J_n^{-1}D_n^{-1}\nabla l_n(\theta_0)+o_{\PP}(1).
\end{equation}
In particular, $\thetan$ is consistent with rates
\begin{equation}\label{eq:rates}
 \widehat s_n-s_0=O_{\PP}(b_n^{-p/2}),\quad
 \widehat l_n-l_0=O_{\PP}(b_n^{-(p+2)/2}),\quad
 \widehat t_n-t_0=O_{\PP}(n^{-1/2}).
\end{equation}
\item \textbf{Asymptotic normality.} Let $M_n$ be any $3\times3$ matrices with
 $M_n^\top M_n=\calI_n(\theta_0)$. Then
\begin{equation}\label{eq:main-an}
 M_n(\thetan-\theta_0)\ \Longrightarrow\ N_3(0,\Id_3).
\end{equation}
Moreover,
\begin{equation}\label{eq:nugget-an}
 \sqrt n(\widehat t_n-t_0)\Longrightarrow N(0,2t_0^2).
\end{equation}
\item \textbf{Exponential tail bounds.} Write $\widehat h_n=D_n(\thetan-\theta_0)
 =(\widehat h_{n,\spat},\widehat h_{n,t})$, where
 $\widehat h_{n,\spat}\in\R^2$ collects the two spatial coordinates $s$ and $l$.
 There are constants $c,C>0$ and
 $n_0$ such that, for all $n\ge n_0$ and all $H\ge0$,
\begin{equation}\label{eq:tail-bound}
\begin{gathered}
 \PP_{\theta_0,n}\bigl\{\norm{\widehat h_{n,\spat}}\ge H\bigr\}
 \le C\exp\{-c\min(H^2,b_n^{p/2}H)\},\\
 \PP_{\theta_0,n}\bigl\{|\widehat h_{n,t}|\ge H\bigr\}
 \le Ce^{-cH^2}.
\end{gathered}
\end{equation}
\item \textbf{Moment convergence.} The normalized MLE has uniformly bounded exponential moments: for every
 $\lambda>0$ there is $n_\lambda$ such that
\begin{equation}\label{eq:exp-moments}
 \sup_{n\ge n_\lambda}\E_{\theta_0,n}
 \exp\bigl\{\lambda\norm{D_n(\thetan-\theta_0)}\bigr\}<\infty.
\end{equation}
 Consequently, let $Z\sim N_3(0,\Id_3)$, and let $f:\R^3\to\R$ be
 continuous with $|f(x)|\le Ce^{\lambda\norm x}$ for some constants
 $C,\lambda>0$. Then, for every choice of $M_n$ in (iii),
\begin{equation}\label{eq:f-convergence}
 \E_{\theta_0,n}f\bigl(D_n(\thetan-\theta_0)\bigr)
 -\E f\bigl(J_n^{-1/2}Z\bigr)\to0,
 \qquad
 \E_{\theta_0,n}f\bigl(M_n(\thetan-\theta_0)\bigr)\to\E f(Z).
\end{equation}
 In particular, all moments converge, and
\begin{equation}\label{eq:moment-convergence}
 D_n\E_{\theta_0,n}(\thetan-\theta_0)\to0,\qquad
 D_n\E_{\theta_0,n}\bigl[(\thetan-\theta_0)(\thetan-\theta_0)^\top\bigr]D_n
 -J_n^{-1}\to0.
\end{equation}
\end{enumerate}
\end{theorem}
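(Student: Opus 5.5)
The plan follows the dependence-to-independence idea from the introduction. Rather than working with $C_n(\theta)$ directly, I would build a low-dimensional surrogate for the spatial part of the data and treat the nugget separately.

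\textbf{Step 1: Taylor coefficients and the spatial block.} Fix a centre $x_*$ of the cube $Q$ from \Cref{ass:coverage}. The RBF process is real-analytic, so $F(x)=\sum_{\alpha}\xi_\alpha (x-x_*)^\alpha/\alpha!$, where $\xi_\alpha=\partial^\alpha F(x_*)$ is jointly Gaussian. Its covariance $\Gamma(\theta)$ has entries $s\,l^{-|\alpha|-|\beta|}\,\gamma_{\alpha\beta}$ with explicit Hermite-type constants $\gamma_{\alpha\beta}$. After a Cholesky (lower-triangular) factorization ordered by degree $|\alpha|$, the coefficients become a triangular linear image of i.i.d.\ $N(0,1)$ variables.

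Take multiindices with $|\alpha|\le m_n$, where $m_n\asymp \log n/\log\log n$. This is the largest degree for which local polynomial fits on $Q$ recover $\xi_\alpha$ from the noisy data with error $o$ of the intrinsic scale. Noise amplification is of order $(C/h)^{|\alpha|}n^{-1/2}$ with $h\asymp n^{-\kappa}$, while the Taylor coefficients only decay factorially. Balancing these gives $m!\approx n^{c}$, i.e.\ $m\approx b_n$.

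The number of recovered coefficients is then $N_n\asymp b_n^{p}$. In this Gaussian sequence experiment, $s$ is a scale parameter, so its information is $\asymp N_n=b_n^{p}$. The parameter $l$ enters as $l^{-|\alpha|}$, so its score carries a factor $|\alpha|\asymp b_n$, giving information $\asymp b_n^{p+2}$. The matching upper bounds come from the minimax / equivalence arguments: the higher coefficients carry no information beyond this order.

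\textbf{Step 2: the nugget block and the Fisher matrix.} For $t$, the information is $\tfrac12\tr(C_{n,0}^{-2})=n/(2t_0^2)+o(n)$. This holds because $sR_n(l)$ has numerical rank $\ll n$: its eigenvalues decay super-exponentially, so only $O(b_n^{p}\,\mathrm{polylog})$ of them exceed $n^{-2}$. The cross terms $\tr(C^{-1}C_aC^{-1}I)$ are $O(b_n^{p+1})=o(b_n^{p/2+1}\sqrt n)$.

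Combining the two blocks yields (i): two-sided bounds on $J_{n,\spat}$, obtained by comparing $\calI_n$ in the Loewner order with the information of the truncated coefficient experiment plus a vanishing error, together with the block form \eqref{eq:information-block}.

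\textbf{Step 3: consistency and localization.} For consistency I would use identifiability-type Kullback–Leibler separation. For $\theta$ outside a neighbourhood of $\theta_0$, $\E[l_n(\theta_0)-l_n(\theta)]\gtrsim b_n^{p}$ in the spatial coordinates and $\gtrsim n$ in $t$. Deviations are controlled uniformly by Hanson–Wright inequalities for the quadratic forms $Y^\top(C_n(\theta)^{-1}-C_{n,0}^{-1})Y$, plus a chaining/net argument over the compact $\Theta$.

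Then localize. For $h=D_n(\theta-\theta_0)$ in a neighbourhood, Taylor-expand $l_n$ and show
\[
\sup_{\norm h\le H}\norm{D_n^{-1}\nabla^2 l_n(\theta_0+D_n^{-1}h)D_n^{-1}+J_n}=o_{\PP}(1).
\]
This uses Hanson–Wright on each Hessian quadratic form, whose variance is $\asymp$ Fisher information. It also uses smoothness of derivatives in $\theta$, with $\epsilon_n=b_n^{-p/2}$ controlling the third-derivative remainder. This gives \eqref{eq:mle-expansion} and \eqref{eq:rates}.

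\textbf{Step 4: normality, tails and moments.} The score $D_n^{-1}\nabla l_n(\theta_0)$ is a vector of centred Gaussian quadratic forms. Diagonalizing each as a weighted sum of $\chi^2_1$, the largest weight is $o(1)$ relative to the standard deviation, which gives a Lyapunov CLT with covariance $J_n$. Since $J_n$ stays in a compact set of positive definite matrices, \eqref{eq:main-an} follows for every $M_n$ with $M_n^\top M_n=\calI_n(\theta_0)$ along subsequences; \eqref{eq:nugget-an} then follows from the block structure.

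For (iv), use the Hanson–Wright tail $\exp\{-c\min(H^2,H/\norm{A}_{\op})\}$. The normalized spatial score matrices have $\norm{A}_{\op}\asymp b_n^{-p/2}$, which produces the $b_n^{p/2}H$ term. Combine this with a peeling argument over shells $\norm h\in[2^k,2^{k+1}]$, using the curvature lower bound (which must hold globally on $\Theta$, not just locally).

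Statement (v) follows by integrating the tails: the exponential moments are bounded for $n\ge n_\lambda$ because $b_n^{p/2}\to\infty$. Uniform integrability then upgrades the weak convergence to \eqref{eq:f-convergence} and \eqref{eq:moment-convergence}.

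\textbf{Main obstacle.} The hard part is Step 1/(i): a \emph{lower} bound on $J_{n,\spat}$ with the exact $b_n^{p}$ and $b_n^{p+2}$ scalings, robust to irregular designs. I would first try to show that the data approximately contain the Cholesky-whitened coefficients $\{\xi_\alpha\}_{|\alpha|\le c\,b_n}$, with total-variation error $o(1)$. By the data-processing inequality for Fisher information, $\calI_n$ then dominates the information of that triangular i.i.d.\ experiment minus a vanishing error. Making this error vanish fast enough to survive the $D_n$ normalization is the delicate point.
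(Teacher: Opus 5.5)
Your local ingredients (Hessian approximation on balls of fixed radius, the Lyapunov CLT for the score, the subsequence argument for (iii), integrating tails for (v)) match the paper. The gap is in the global step, on which (ii) and (iv) rest.

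\textbf{Consistency does not give the rate.} Consistency outside a fixed neighbourhood does not yield $\widehat h_n=O_\PP(1)$, and no local argument can close the difference. The normalized third derivatives are $O_\PP(\epsilon_n)$, so the local bounds keep the rescaled Hessian near $-J_n$ only for $\norm h\ll\epsilon_n^{-1}=b_n^{p/2}$; in the lengthscale this means only $|l-l_0|\ll1/b_n$. Worse, the quadratic approximation itself fails once $\norm h\asymp b_n^{p/6}$ (\Cref{rem:radius}). Tightness must therefore come from a global bound at the scale of $D_n$, so (iv) has to be proved before (ii), not after.

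\textbf{The $b_n^{p/2}H$ branch has a different origin.} That branch of \eqref{eq:tail-bound} is the regime $H\gtrsim b_n^{p/2}$, where $|\log(s/s_0)|$ or $b_n|\log(l/l_0)|$ is of order one and no curvature bound exists. It does not come from the Hanson--Wright tail of the score. It comes from the linear growth of $\log\cosh$ in the global separation $-\log\Aff(P_{\theta,n},P_{\theta_0,n})\ge cb_n^p\min\{\Lambda_n(\theta)^2,\Lambda_n(\theta)\}-o(1)$. You assert a KL separation but never derive it, and this is exactly where the coefficient structure must be used globally:
\begin{enumerate}
\item the conditional variances $s\,l^{-2|\alpha|}$;
\item the univariate bound $\Aff\le\cosh\{\frac12\log(\omega/\omega')\}^{-1/2}$, which does not depend on the parameter-dependent conditional means;
\item a fixed fraction of degrees being separated;
\item transfer to $Y_n$ through a $\theta$-free statistic $H_nY_n$ by monotonicity of affinity, losing only $o(1)$ in log-affinity, plus the block $W_n$ for $t$.
\end{enumerate}

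\textbf{Hanson--Wright plus a net does not give uniform control.} Away from $\theta_0$, $C_n(\theta)$ and $C_{n,0}$ are comparable only up to factors $e^{Cb_n}$. Hence the rescaled score at $\theta$ is not uniformly bounded under $\PP_{\theta_0,n}$, which breaks the Lipschitz control a net needs. Moreover, the event $\{l_n(\theta)\ge l_n(\theta_0)\}$ is driven by the lower tail of $\chi^2_1$ variables carrying weights $\mu_k-1$ as large as $e^{cb_n}$. A two-sided bound governed by $\max_k\mu_k$ cannot see this, whereas the Chernoff bound at exponent $1/2$ (the affinity) gains a factor $\asymp\mu_k^{-1/4}$ per direction. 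The paper's device is the process $\xi_n=Z_n^{1/(2m)}$. Its $m$-th moment $\E_{\theta_0,n}\xi_n(h)^m$ is exactly the affinity. Its $L^m$-Lipschitz constant is bounded on the entire rescaled box by changing measure to $\PP_{\theta,n}$, where the score has bounded moments. The Ibragimov--Has'minskii cover inequality over unit cubes, applied to $\xi_n(\widehat h_n)\ge\xi_n(0)=1$, then gives \eqref{eq:tail-bound}.

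\textbf{Part (i) also needs repair.}
\begin{itemize}
\item The data-processing inequality for Fisher information does not survive a $\TV$ approximation, since Fisher information is not continuous in $\TV$. You would need control of the $\theta$-derivatives of $H_nC_n(\theta)H_n^\top-\Sigma_{m_n,\theta}$. The paper avoids this. On the unit sphere, $-\log\Aff(P_{\theta_h,n},P_{\theta_0,n})=\frac18h^\top J_{n,\spat}h+o(1)$ follows from the covariance-derivative bounds alone, while the global separation gives $\ge c-e_n$ there. At rescaled distance one, an $o(1)$ recovery error in log-affinity is harmless.
\item If ``Cholesky-whitened coefficients'' means the $\theta$-dependent whitening, those variables are $N(0,1)$ under every $\theta$ and carry no information. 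The $\theta$-free statistic must target the unwhitened coefficient vector.
\item The upper bounds are not supplied. A minimax lower bound does not bound $\calI_n$ from above; in the paper the implication runs the other way. The claim that higher coefficients carry no information must be proved. It follows from the $O(b_n^p)$ numerical rank of $R_n$ together with $\norm{C_n^{-1/2}\partial_lC_n\,C_n^{-1/2}}_{\op}\le Cb_n$, which comes from the feature-raising identity for $\partial_l\phi_{\alpha,l}$.
\item Your recovery heuristic is off. An amplification $(C/h)^{|\alpha|}$ with $h\asymp n^{-\kappa}$ would be superpolynomial at $|\alpha|\asymp b_n$. The fit must be global on the fixed cube $Q$. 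There, extracting a normalized degree-$m$ coefficient from a degree-$K$ fit amplifies errors by $e^{m\log m+O(K)}$. This must beat both the truncation error $e^{-K\log K}$ and the noise level $\max_iw_{n,i}\lesssim n^{-\kappa p}$, which forces $K\approx2m$ and $m\asymp b_n$. The fill distance enters only through the weights and the quadrature error of the Gram matrix.
\end{itemize}
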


\Cref{thm:rbf} provides a complete asymptotic characterization of the MLE. All three kernel parameters are consistently estimated, but at substantially different rates. The nugget variance $t$ has the standard parametric rate $n^{-1/2}$, whereas the spatial variance $s$ and lengthscale $l$ converge at the slower logarithmic rates. We show in \Cref{sec:minimax} that these rates are minimax optimal. % The exponential tail and moment bounds further strengthen these distributional results and will allow us to establish the corresponding mean squared error (MSE) rates and their optimality below.

The MLE is also jointly asymptotically normal after normalization by the Fisher information. An important feature of \Cref{thm:rbf} is that it does not require the normalized Fisher information $J_n$ to converge. If, for a particular design sequence, $J_{n,\spat}\to J_{\spat}$, then it also implies
\begin{equation*}
 D_n(\thetan-\theta_0)
 \Longrightarrow N_3\left(
 0,\begin{pmatrix}J_{\spat}^{-1}&0\\0&2t_0^2\end{pmatrix}
 \right).
\end{equation*}
Thus, when $J_n$ converges, the MLE admits a conventional asymptotic normal distribution under the deterministic normalization $D_n$. When it does not, the Fisher-normalized result in part~(iii) continues to provide a valid asymptotic normal approximation for the actual observation design.

Beyond convergence in probability and distribution, parts (iv) and (v) provide exponential tail bounds and moment convergence for the normalized MLE. In particular, the moment convergence allows the stochastic rates in part (ii) to be translated into corresponding MSE rates, which are used in \Cref{sec:minimax} to establish minimax optimality.

\begin{remark}[Uniformity in $\theta_0$]\label{rem:uniform}
Inspection of the proofs shows that parts (i) and (iv) also hold
uniformly in $\theta_0\in\Theta$. Parts~(ii), (iii), and the limits
in part~(v) hold uniformly over $\theta_0$ in compact subsets
$K$ of $\intt(\Theta)$. For example, writing
$\mathcal L_{\theta_0}$ for the law under $\theta_0$ and
$d_{\mathrm{BL}}$ for the bounded Lipschitz distance,
\[
 \sup_{\theta_0\in K}d_{\mathrm{BL}}\bigl\{\mathcal L_{\theta_0}
 \bigl(M_n(\thetan-\theta_0)\bigr),N_3(0,\Id_3)\bigr\}\to0.
\]
Proofs of these uniform claims are given in Appendix~\ref{app:uniform}.

Uniformity is what makes these asymptotic statements usable for
inference when $\theta_0$ is unknown. It ensures that Wald confidence
intervals based on $\calI_n(\theta_0)^{-1}$ (or $\calI_n(\hat{\theta}_n)^{-1}$, as will be shown later) attain their nominal coverage
asymptotically, uniformly over $\theta_0\in K$, rather than only at each
fixed parameter value.

\end{remark}

\subsection{Minimax optimality}\label{sec:minimax}
The convergence rates in \Cref{thm:rbf} are logarithmic for the spatial variance $s$ and lengthscale $l$. In fact, the logarithmic rates also arise for the specially constructed estimators of \citet{loh2026consistent} for noiseless observations, whose established rates are slower than those obtained here for the MLE. A natural question is therefore
whether the rates in \Cref{thm:rbf} can be improved by using a different estimator. The following result shows that they cannot: the rates in \Cref{thm:rbf} are minimax optimal for all three parameters, and the MLE attains these optimal rates.

\begin{theorem}[Minimax optimality]\label{thm:minimax}
Assume \Cref{ass:parameter}. Let $d_{n,j}=D_{n,jj}$ for $j\in\{s,l,t\}$. For each $j\in\{s,l,t\}$, there exist constants $a>0$ and $c_0>0$ such
that, for every fixed $\theta_0\in\intt(\Theta)$,
\[
 \theta_{1,n}^{(j)}
 =\theta_0+a d_{n,j}^{-1}e_j\in\Theta
\]
for all sufficiently large $n$, and
\begin{equation}\label{eq:probability-lower}
 \inf_{\widetilde\theta_{n,j}}
 \max_{\theta\in\{\theta_0,\theta_{1,n}^{(j)}\}}
 \PP_{\theta,n}\left\{
 |\widetilde\theta_{n,j}-\theta_j|
 \ge\frac{a}{2d_{n,j}}\right\}
 \ge c_0.
\end{equation}
The other two parameters are identical under the two alternatives, so
this lower bound continues to hold even when they are known. Consequently,
\begin{equation}\label{eq:risk-lower}
 \inf_{\widetilde\theta_{n,j}}
 \sup_{\theta\in\Theta}
 \E_{\theta,n}(\widetilde\theta_{n,j}-\theta_j)^2
 \ge c\,d_{n,j}^{-2}.
\end{equation}

If \Cref{ass:coverage} also holds, then the MLE satisfies
\begin{equation}\label{eq:minimax-upper}
 \sup_{\theta\in\Theta}
 \E_{\theta,n}(\widehat\theta_{n,j}-\theta_j)^2
 \le C\,d_{n,j}^{-2}.
\end{equation}
Therefore, for each parameter $j\in\{s,l,t\}$,
\begin{equation}\label{eq:minimax-rate}
 \inf_{\widetilde\theta_{n,j}}
 \sup_{\theta\in\Theta}
 \E_{\theta,n}(\widetilde\theta_{n,j}-\theta_j)^2
 \asymp d_{n,j}^{-2},
\end{equation}
and the MLE attains the minimax rate. Equivalently, the minimax root MSE rates for $(s,l,t)$ are
\[
 b_n^{-p/2},\qquad
 b_n^{-(p+2)/2},\qquad
 n^{-1/2},
\]
respectively.
\end{theorem}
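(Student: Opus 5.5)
The lower bound \eqref{eq:probability-lower} follows from Le Cam's two-point method. The key quantity is a divergence between $N_n(0,C_n(\theta_0))$ and $N_n(0,C_n(\theta_{1,n}^{(j)}))$. Since $\theta_0\in\intt(\Theta)$ and $d_{n,j}\to\infty$, the perturbed point lies in $\Theta$ for large $n$. Only coordinate $j$ changes, which gives the remark that the other two parameters may be known.

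If an estimator has $|\widetilde\theta_{n,j}-\theta_j|<a/(2d_{n,j})$ under both hypotheses, it would separate two parameters at distance $a/d_{n,j}$. Hence the maximal error probability is at least $\tfrac12(1-\TV)$, and it suffices to show $\TV\le 1-2c_0$ uniformly in large $n$. I would bound $\TV$ by Pinsker or Hellinger using the Gaussian KL formula
\[
\mathrm{KL}=\tfrac12\bigl[\tr(C_0^{-1}C_1)-n-\log\det(C_0^{-1}C_1)\bigr]=\tfrac14\norm{C_0^{-1/2}(C_1-C_0)C_0^{-1/2}}_\Frob^2+\text{higher order}.
\]
Set $\Delta=C_0^{-1/2}(C_1-C_0)C_0^{-1/2}$. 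If $\norm{\Delta}_\op$ is small, the higher-order term is dominated by the quadratic term. By a Taylor expansion of $C_n$ in $\theta_j$ along the segment, $\tfrac14\norm{\Delta}_\Frob^2\approx \tfrac12 a^2 d_{n,j}^{-2}[\calI_n(\theta)]_{jj}$. It is therefore bounded by $C a^2$ times $[J_n]_{jj}$, evaluated at points on the segment.

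The lower bound must hold without \Cref{ass:coverage}, so I cannot invoke the upper bound in \Cref{thm:rbf}(i). Instead I need a design-free upper bound $d_{n,j}^{-2}\sup_{\theta\in\Theta}[\calI_n(\theta)]_{jj}\le C$ valid for arbitrary points in the bounded domain.

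For $t$ this is immediate, since $C_n^{-1}\preceq t_-^{-1}I$ gives $[\calI_n]_{tt}\le n/(2t_-^2)$. Also $\Delta\preceq$ a multiple of $a/\sqrt n$ in operator norm, so the KL bound above is rigorous.

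For $s$ and $l$ the bound is the main obstacle. I would write $C_{n,s}=R_n(l)$ and $C_{n,l}=s\,\partial_lR_n(l)$ and use an analytic (Taylor/Hermite) expansion of the RBF kernel. This gives $R_n=\Phi\Lambda\Phi^\top$ with features indexed by multi-indices $\alpha$, whose eigen-scale decays like $\rho^{|\alpha|}/\alpha!$ (the same mechanism the paper uses). Then $\tr\bigl((C_n^{-1}R_n)^2\bigr)\le\sum_k\min\bigl(1,(s\lambda_k/t)^2\bigr)$, where $\lambda_k$ are eigenvalues of $R_n$. The eigenvalues of $R_n$ are at most $n$ times the eigenvalues of the kernel's integral operator on a ball containing the domain, up to truncation error, and the latter decay like $\exp(-c\,m\log m)$ at level $|\alpha|=m$. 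So only indices with $m\log m\lesssim\log n$, i.e.\ $m\lesssim b_n$, contribute $O(1)$ each. Their count is $\binom{m+p}{p}\asymp b_n^{p}$, and the tail is summable. Hence $[\calI_n]_{ss}\le Cb_n^{p}$. The derivative $\partial_l$ multiplies level-$m$ terms by roughly $m/l$, which yields $[\calI_n]_{ll}\le Cb_n^{p+2}$. Making this rigorous for arbitrary designs is the hardest step. I would do it via a low-rank approximation $R_n\approx\Phi_m\Lambda_m\Phi_m^\top+E$, with rank $\binom{m+p}{p}$ and $\norm{E}_\op\le n\,e^{-cm\log m}$ (by Taylor remainder bounds for $e^{x\cdot y/l^2}$), and choosing $m=Cb_n$ so that $\norm{E}_\op\le t_-$.

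Given these bounds, choose $a$ small so that $\mathrm{KL}\le 1/8$, hence $\TV\le 1/4$, giving $c_0=3/8$. The risk bound \eqref{eq:risk-lower} follows from Markov's inequality applied to \eqref{eq:probability-lower}, with $c=c_0a^2/4$. For \eqref{eq:minimax-upper}, use the uniformity in $\theta_0\in\Theta$ of \Cref{thm:rbf}(iv) (\Cref{rem:uniform}). Integrating the tail bounds $C\exp\{-c\min(H^2,b_n^{p/2}H)\}$ gives $\sup_\theta\E_\theta\norm{\widehat h_n}^2\le C$ coordinatewise, i.e.\ $\E(\widehat\theta_{n,j}-\theta_j)^2\le Cd_{n,j}^{-2}$. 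Combining the two bounds gives \eqref{eq:minimax-rate}.
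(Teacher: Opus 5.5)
Your skeleton matches the paper's: Le Cam's two-point reduction with $c_0=3/8$, a coverage-free bound $\sup_{\theta\in\Theta}\calI_{jj,n}(\theta)\le Cd_{n,j}^2$, Markov's inequality for \eqref{eq:risk-lower}, and integration of the $\theta_0$-uniform tail bound of \Cref{thm:rbf}(iv) for \eqref{eq:minimax-upper}. In the paper, the coverage-free bound is the first part of \Cref{lem:information}, which assumes only bounded locations.

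The two routes differ in how information becomes a total variation bound. The paper uses $\norm{\partial_up_u}_{L^1}=\E_u|\partial_u\log p_u|\le\calI_{jj,n}(u)^{1/2}$ along the one-coordinate path, so $\TV\le\frac12\int_{u_0}^{u_1}\calI_{jj,n}(u)^{1/2}\,du$. This needs only the diagonal information bound on the segment. Your KL expansion also needs $\norm{\Delta}_{\op}$ small and a re-whitening from $\theta_0$ to the intermediate points. You check these only for $t$; they are also immediate for $s$, since $s_0R_n\preceq C_{n,0}$. For $l$ they follow from the information bound, but only after an extra step. Since $\norm{\cdot}_{\op}\le\norm{\cdot}_\Frob$, the bound $\calI_{ll,n}\le Cd_{n,l}^2$ gives $|\frac{d}{du}\log x^\top C_n(\theta_u)x|\le Ca$ along the segment. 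Hence $e^{-Ca}C_{n,0}\preceq C_n(\theta_u)\preceq e^{Ca}C_{n,0}$, which controls both quantities. Bounding Hellinger distance by Fisher--Rao path length would skip this step, as the paper's bound does.

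The genuine gap is the coverage-free bound $\calI_{ll,n}\le Cb_n^{p+2}$ itself, which you flag as hardest but support only heuristically.

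For $s$, eigenvalue counting works because $C_{n,s}=R_n$ commutes with $C_n$. Even there, bounding the tail eigenvalues by $\norm{E}_{\op}\le t_-$ leaves a contribution of order $n$. You need $\norm{E}_\Frob=O(1)$, e.g.\ $n\max_{i,k}|E_{ik}|\le1$ by taking $m$ a larger multiple of $b_n$, because $\sum_{k>r}\lambda_k(R_n)^2\le\norm{E}_\Frob^2$.

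For $l$, counting does not apply. $\partial_lR_n$ does not commute with $C_n$: the Gaussian envelope alone contributes $DR_n+R_nD$ with $D=\diag(\norm{x_{n,i}}^2/l^3)$. So $\tr\{(C_n^{-1}\partial_lR_n)^2\}$ is not a function of the spectrum of $R_n$. The heuristic ``$\partial_l$ multiplies level $m$ by $m/l$'' gives no whitened bound, and a low-rank approximation of $R_n$ alone does not fix this.

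The missing mechanism is the one in \Cref{lem:projection}:
\begin{itemize}
\item Differentiate the features: $\partial_l\phi_{\alpha,l}=l^{-1}\{-|\alpha|\phi_{\alpha,l}+\sum_j\sqrt{(\alpha_j+1)(\alpha_j+2)}\,\phi_{\alpha+2e_j,l}\}$.
\item With $B_M$ the matrix of features of degree at most $M$, this gives $\partial_l(B_MB_M^\top)=B_{M+2}HB_{M+2}^\top$ with $\norm{H}_{\op}=O(M)$. To see the norm bound, split the raising map into pieces with orthogonal columns.
\item Then $sB_{M+2}B_{M+2}^\top\preceq C_n$ gives the contraction $\norm{\sqrt s\,C_n^{-1/2}B_{M+2}}_{\op}\le1$.
\item So the whitened truncated derivative has operator norm $O(M)$ and rank $O(M^p)$, hence Frobenius norm $O(M^{p/2+1})$.
\item The factorial tail is negligible for $M$ a large multiple of $b_n$. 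Taking $M\asymp b_n$ gives $\calI_{ll,n}\le Cb_n^{p+2}$ uniformly over $\Theta$, for any bounded design.
\end{itemize}
Without this contraction step, your lower bound for $l$ is not established.
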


\subsection{Some useful corollaries}\label{sec:covariance}
The preceding results characterize the asymptotic behavior and prove minimax optimality of the MLE. We next give three corollaries that connect this theory more directly to statistical practice. They %quantify the actual estimation
%error through the variance and MSE (\Cref{cor:mse}), 
show that the theory carries over to the commonly used log parametrization (\Cref{cor:log-scale}), justify confidence intervals based
on estimated Fisher information rather than the Fisher information at the unknown true parameter (\Cref{cor:plugin}), and extend the
deterministic-design theory to random designs (\Cref{cor:random}). All proofs are given in
Appendix~\ref{app:proof-cor}.

\subsubsection{Log scale}
As all three parameters are positive, GP implementations commonly optimize them on the log scale, which automatically enforces positivity. Our computations and simulations in \Cref{sec:simulation} therefore use $\lambda=\log\theta=(\log s,\log l,\log t)$, coordinatewise. By
equivariance, the MLE of $\lambda$ is $\widehat\lambda_n=\log\thetan$.
Let $\Lambda_0=\diag(s_0,l_0,t_0)$. Since
$\partial_{\log\theta_j}=\theta_j\partial_{\theta_j}$, the score and the
Fisher information for $\lambda$ at $\lambda_0$ are
$\Lambda_0\nabla l_n(\theta_0)$ and
\[
 \calI_n^{\log}(\theta_0)=\Lambda_0\calI_n(\theta_0)\Lambda_0
 =D_nJ_n^{\log}D_n,\qquad J_n^{\log}=\Lambda_0J_n\Lambda_0,
\]
because the diagonal matrices $\Lambda_0$ and $D_n$ commute.

\begin{corollary}[Log scale]\label{cor:log-scale}
Under \Cref{ass:parameter,ass:coverage}, all assertions of
\Cref{thm:rbf} and \Cref{thm:minimax} hold for $\widehat\lambda_n$, with
$\thetan-\theta_0$ replaced by $\widehat\lambda_n-\lambda_0$,
$\calI_n(\theta_0)$ by $\calI_n^{\log}(\theta_0)$, $J_n$ by $J_n^{\log}$,
and $\nabla l_n(\theta_0)$ by $\Lambda_0\nabla l_n(\theta_0)$, after
adjusting the constants $c$ and $C$. In particular,
$M_n(\widehat\lambda_n-\lambda_0)\Rightarrow N_3(0,\Id_3)$ whenever
$M_n^\top M_n=\calI_n^{\log}(\theta_0)$,
\[
 \sqrt n\,(\log\widehat t_n-\log t_0)\Longrightarrow N(0,2),
\]
and
\[
 \E(\log\widehat s_n-\log s_0)^2\asymp b_n^{-p},\quad
 \E(\log\widehat l_n-\log l_0)^2\asymp b_n^{-(p+2)},\quad
 n\E(\log\widehat t_n-\log t_0)^2\longrightarrow2.
\]
The uniformity statements of \Cref{rem:uniform} also carry over.
\end{corollary}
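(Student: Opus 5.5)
The corollary follows from \Cref{thm:rbf} and \Cref{thm:minimax} via the coordinatewise diffeomorphism $\phi(\theta)=\log\theta$. The key fact is that $\phi$ and its inverse are smooth with bounded derivatives on the compact rectangle $\Theta$, so every statement transfers by a delta-method argument.

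\textbf{Step 1: equivariance and the expansion.} Since $\phi$ is a bijection from $\Theta$ onto the rectangle $\log\Theta$, we have $\widehat\lambda_n=\log\thetan$. A Taylor expansion gives
\[
\widehat\lambda_n-\lambda_0=\Lambda_0^{-1}(\thetan-\theta_0)+r_n,
\qquad |r_{n,j}|\le C(\widehat\theta_{n,j}-\theta_{0,j})^2 .
\]
Multiplying by $D_n$ and using that $D_n$ and $\Lambda_0$ commute,
\[
D_n(\widehat\lambda_n-\lambda_0)=\Lambda_0^{-1}D_n(\thetan-\theta_0)+D_nr_n .
\]
The remainder satisfies $|d_{n,j}r_{n,j}|\le C d_{n,j}^{-1}\,(d_{n,j}(\widehat\theta_{n,j}-\theta_{0,j}))^2$. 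This is $o_{\PP}(1)$ by \eqref{eq:rates}, because $d_{n,j}\to\infty$.

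Now substitute \eqref{eq:mle-expansion}. Using $J_n^{\log}=\Lambda_0J_n\Lambda_0$,
\[
\Lambda_0^{-1}J_n^{-1}D_n^{-1}\nabla l_n
=(J_n^{\log})^{-1}D_n^{-1}\Lambda_0\nabla l_n(\theta_0).
\]
This is exactly the log-scale version of \eqref{eq:mle-expansion}. The information bounds \eqref{eq:information-bounds} transfer to $J_n^{\log}$ with new constants, since $\Lambda_0$ is fixed and nonsingular. The nugget entry of the block form becomes $t_0^2(2t_0^2)^{-1}=1/2$.

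\textbf{Step 2: normality.} Let $M_n^\top M_n=\calI_n^{\log}(\theta_0)$. Then $\widetilde M_n=M_n\Lambda_0^{-1}$ satisfies $\widetilde M_n^\top\widetilde M_n=\calI_n(\theta_0)$. Hence
\[
M_n(\widehat\lambda_n-\lambda_0)=\widetilde M_n(\thetan-\theta_0)+M_nr_n .
\]
We have $\|M_nD_n^{-1}\|_{\op}^2=\|J_n^{\log}\|_{\op}\le C$, and $D_nr_n=o_{\PP}(1)$, so $M_nr_n=o_{\PP}(1)$. Slutsky's lemma together with \eqref{eq:main-an} then gives the log-scale version of \eqref{eq:main-an}. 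The delta method applied to \eqref{eq:nugget-an} gives $\sqrt n(\log\widehat t_n-\log t_0)\Rightarrow N(0,2t_0^2/t_0^2)=N(0,2)$.

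\textbf{Step 3: tails, moments and minimax.} The map $\phi$ is Lipschitz on $\Theta$ with constant $1/\min(s_-,l_-,t_-)$, and its inverse is Lipschitz with constant $\max(s_+,l_+,t_+)$. So $|\widehat\lambda_{n,j}-\lambda_{0,j}|\le C|\widehat\theta_{n,j}-\theta_{0,j}|$ coordinatewise, and the tail bounds \eqref{eq:tail-bound} and the exponential moments \eqref{eq:exp-moments} transfer with adjusted constants.

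This step is where I expect the main care to be needed. For \eqref{eq:f-convergence} with exponentially bounded $f$, I would argue as follows. Convergence in distribution holds by Step 2. The family $\exp\{\lambda'\|D_n(\widehat\lambda_n-\lambda_0)\|\}$ is uniformly integrable for every $\lambda'$, by the transferred exponential moments applied with $2\lambda'$. Uniform integrability plus distributional convergence yields the moment statements.

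There is one subtlety: $J_n$ need not converge. I would handle it with a subsequence argument, exactly as in the original theorem, and compare against $(J_n^{\log})^{-1/2}Z$. Applying this to the coordinate second moments gives $n\E(\log\widehat t_n-\log t_0)^2\to 2$, and $\E(\log\widehat s_n-\log s_0)^2\asymp b_n^{-p}$ via the bounds on $J_n^{\log}$.

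For \Cref{thm:minimax}, the lower bound transfers directly. The two-point alternatives $\theta_0+ad_{n,j}^{-1}e_j$ differ in $\lambda_j$ by at least $c\,a d_{n,j}^{-1}$, so the testing-based bound \eqref{eq:probability-lower} holds with a halved constant. The upper bound follows from the Lipschitz comparison combined with \eqref{eq:minimax-upper}.

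The uniformity claims of \Cref{rem:uniform} carry over because all the constants above depend only on $\Theta$.
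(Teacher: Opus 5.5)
Your proposal is correct and follows essentially the same route as the paper. The shared steps are a second-order Taylor expansion of $\log$ with $D_n$-normalized remainder of order $\epsilon_n\|D_n(\thetan-\theta_0)\|^2$, the identity $\Lambda_0^{-1}J_n^{-1}D_n^{-1}\nabla l_n(\theta_0)=(J_n^{\log})^{-1}D_n^{-1}\Lambda_0\nabla l_n(\theta_0)$, the reduction to $\widetilde M_n=M_n\Lambda_0^{-1}$ with $\widetilde M_n^\top\widetilde M_n=\calI_n(\theta_0)$, the coordinatewise Lipschitz bound for tails and exponential moments, and the subsequence argument against $(J_n^{\log})^{-1/2}Z$ for part (v).

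The differences are minor. You apply \Cref{thm:rbf}(iii) directly to $\widetilde M_n$ instead of passing through the score and \Cref{lem:score}, and you spell out the log-scale minimax transfer, which the paper leaves implicit. In that transfer, the precise adjustment is not a ``halved constant'': the $\lambda_j$-separation of the two alternatives is at least $a/(\theta_+d_{n,j})$ with $\theta_+=\max(s_+,l_+,t_+)$, so $a$ is replaced by $a/\theta_+$ and $c_0=3/8$ is unchanged.
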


Thus, working on the log scale does not change the convergence rates or the first-order asymptotic theory. It has additional advantages of respecting positivity and yielding a parameter-free limiting variance for the nugget, and, as the simulations in \Cref{sec:simulation} illustrate, can provide a better normal 
approximation at moderate sample sizes.

\subsubsection{Confidence intervals}
Asymptotic normality is useful for practical statistical inference only when the asymptotic covariance matrix can be estimated from the data. In particular, to report standard errors or confidence intervals for the kernel parameters, the Fisher information at the unknown true parameter $\theta_0$ needs to be replaced by an estimated quantity. The next corollary shows that both the expected and observed information evaluated at the MLE $\thetan$ are valid replacements.

\begin{corollary}[Estimated information and confidence intervals]\label{cor:plugin}
Under \Cref{ass:parameter,ass:coverage}, let $\widehat\calI_n$ denote
either the plug-in Fisher information $\calI_n(\thetan)$ or the observed
information $-\nabla^2l_n(\thetan)$. Then the following assertions hold.
\begin{enumerate}[label=(\roman*)]
\item \textbf{Consistency of the estimated Fisher information.}
\begin{equation}\label{eq:plugin-information}
 D_n^{-1}\widehat\calI_nD_n^{-1}-J_n=O_\PP(\epsilon_n).
\end{equation}
In particular, $\widehat\calI_n$ is positive definite with probability
tending to one, and for each $j\in\{s,l,t\}$,
\[
 \frac{[\widehat\calI_n^{-1}]_{jj}}{[\calI_n(\theta_0)^{-1}]_{jj}}
 =1+O_\PP(\epsilon_n).
\]
\item \textbf{Coordinatewise intervals.} For each $j\in\{s,l,t\}$,
\[
 \frac{\widehat\theta_{n,j}-\theta_{0,j}}
 {[\widehat\calI_n^{-1}]_{jj}^{1/2}}\Longrightarrow N(0,1).
\]
Consequently, for every $\alpha\in(0,1)$, letting $z_{1-\alpha/2}$ denote the $(1-\alpha/2)$ quantile of the standard normal distribution, the Wald interval
$\widehat\theta_{n,j}\pm z_{1-\alpha/2}[\widehat\calI_n^{-1}]_{jj}^{1/2}$
contains $\theta_{0,j}$ with probability tending to $1-\alpha$.
\item \textbf{Joint inference.} Let $\widehat L_n$ be the
lower-triangular Cholesky factor of $\widehat\calI_n$. Then
\[
 \widehat L_n^\top(\thetan-\theta_0)\Longrightarrow N_3(0,\Id_3),
 \qquad
 (\thetan-\theta_0)^\top\widehat\calI_n(\thetan-\theta_0)
 \Longrightarrow\chi^2_3.
\]
Hence the ellipsoid
$\{\theta:(\thetan-\theta)^\top\widehat\calI_n(\thetan-\theta)
\le\chi^2_{3,1-\alpha}\}$ is an asymptotic $1-\alpha$ confidence region
for $\theta_0$.
\item \textbf{Log scale.} Let $\widehat\Lambda_n=\diag(\thetan)$ and
$\widehat\calI_n^{\log}=\widehat\Lambda_n\widehat\calI_n\widehat\Lambda_n$.
Then (i)--(iii) hold for $\widehat\lambda_n-\lambda_0$, with
$\widehat\calI_n$ replaced by $\widehat\calI_n^{\log}$ and $J_n$ by
$J_n^{\log}$. In particular, the interval
\[
 \Bigl[\widehat\theta_{n,j}
 \exp\bigl\{-z_{1-\alpha/2}[(\widehat\calI_n^{\log})^{-1}]_{jj}^{1/2}\bigr\},\;
 \widehat\theta_{n,j}
 \exp\bigl\{z_{1-\alpha/2}[(\widehat\calI_n^{\log})^{-1}]_{jj}^{1/2}\bigr\}\Bigr]
\]
contains $\theta_{0,j}$ with probability tending to $1-\alpha$. When
$\widehat\calI_n$ is the observed information, $\widehat\calI_n^{\log}$
coincides with the observed information $-\nabla_\lambda^2l_n$ at
$\widehat\lambda_n$ whenever $\thetan\in\intt(\Theta)$, an event of
probability tending to one.
\end{enumerate}
On the event that $\widehat\calI_n$ is not positive definite, the
intervals and regions in (ii)--(iv) may be defined arbitrarily.
\end{corollary}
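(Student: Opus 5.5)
The plan is to reduce everything to a single perturbation bound for the normalized information near $\theta_0$, and then combine it with \Cref{thm:rbf} via Slutsky-type arguments. The key intermediate claim is a \emph{local Lipschitz bound}: for $\theta$ with $\norm{D_n(\theta-\theta_0)}\le H$, with $H$ fixed or growing slowly,
\[
 \norm{D_n^{-1}\bigl(\calI_n(\theta)-\calI_n(\theta_0)\bigr)D_n^{-1}}\le C\epsilon_n H .
\]
Since $\norm{D_n(\thetan-\theta_0)}=O_\PP(1)$ by \Cref{thm:rbf}(ii), this gives \eqref{eq:plugin-information} for the plug-in information.

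To prove the local bound, I would write $[\calI_n(\theta)]_{ab}=\frac12\tr(C_n^{-1}C_{n,a}C_n^{-1}C_{n,b})$ and differentiate once more in $\theta$. The resulting third-order trace terms, normalized by $D_n^{-1}$ in each index, must be controlled by the same machinery used for \Cref{thm:rbf}(i). That machinery is the Taylor-coefficient representation, in which the spatial information lives in roughly $b_n^{p}$ effective coordinates.

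Heuristically, the normalized third derivative in spatial directions is of order $b_n^{-p/2}=\epsilon_n$. In directions involving $t$ it is of order $n^{-1/2}$ or smaller. Multiplying by the step $D_n^{-1}h$ with $\norm h\le H$ yields $C\epsilon_nH$. I expect this uniform bound on the normalized third derivatives of the log likelihood over a $D_n$-neighbourhood to be the main obstacle. I would try to obtain it by checking that the proof of \eqref{eq:information-bounds} holds uniformly in $\theta_0\in\Theta$, as claimed in \Cref{rem:uniform}, and then applying the same spectral decomposition with an extra derivative.

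For the observed information, I would decompose
\[
 -\nabla^2l_n(\theta)=\calI_n(\theta)+\bigl(-\nabla^2l_n(\theta)-\E_\theta[-\nabla^2l_n(\theta)]\bigr)
\]
evaluated at $\theta$ near $\theta_0$. The centered term is a quadratic form in $Y_n$ minus its mean. After $D_n$-normalization, its variance is a sum of traces of the same type, bounded by $C\epsilon_n^2$ (and $O(n^{-1})$ in the $t$ direction). Chebyshev's inequality at $\theta_0$, plus the third-derivative bound to move to $\thetan$, then gives $O_\PP(\epsilon_n)$.

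Given (i), the remaining parts follow in order.

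\emph{Positive definiteness and variance ratio.} Since $J_n\succeq c\Id_3$, the perturbed matrix $\widehat J_n=D_n^{-1}\widehat\calI_nD_n^{-1}$ is positive definite with probability tending to one. Moreover $\widehat J_n^{-1}-J_n^{-1}=O_\PP(\epsilon_n)$. Because $[\widehat\calI_n^{-1}]_{jj}=d_{n,j}^{-2}[\widehat J_n^{-1}]_{jj}$ and $[J_n^{-1}]_{jj}\ge c$, the ratio claim follows.

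\emph{Part (ii).} Combine the ratio claim with \eqref{eq:main-an}. Applying \eqref{eq:main-an} with $M_n$ ranging over all square roots, the linear functional $(\thetan-\theta_0)_j/[\calI_n^{-1}]_{jj}^{1/2}$ is asymptotically $N(0,1)$; Slutsky's lemma then gives the studentized version.

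\emph{Part (iii).} Write
\[
 \widehat L_n^\top=\bigl(\widehat L_n^\top D_n^{-1}\bigr)D_n .
\]
Here $D_n^{-1}\widehat L_n=\chol(\widehat J_n)$, and the Cholesky map is continuous, indeed Lipschitz, on $\{c\Id\preceq J\preceq C\Id\}$. Hence $\chol(\widehat J_n)-\chol(J_n)\to0$ in probability, with $L_n=D_n\chol(J_n)$ satisfying $L_nL_n^\top=\calI_n(\theta_0)$. Taking $M_n=L_n^\top$ in \eqref{eq:main-an} and using $D_n(\thetan-\theta_0)=O_\PP(1)$ gives the first claim; the $\chi^2_3$ claim follows by the continuous mapping theorem.

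\emph{Part (iv).} Use $\widehat\Lambda_n-\Lambda_0=o_\PP(1)$ together with the fact that diagonal matrices commute with $D_n$. This transfers (i)–(iii) via \Cref{cor:log-scale}, since $J_n^{\log}=\Lambda_0J_n\Lambda_0$ is also uniformly nondegenerate. The identity with the observed information on the log scale is the chain rule
\[
 \nabla_\lambda^2l_n=\Lambda\nabla^2l_n\Lambda+\diag(\Lambda\nabla l_n),
\]
and the gradient vanishes at an interior MLE.
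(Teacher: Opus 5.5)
Your outline follows the paper's proof: a Lipschitz bound for $J_n(\theta)=D_n^{-1}\calI_n(\theta)D_n^{-1}$ obtained by differentiating once more, then the Hessian at $\theta_0$ plus a third-derivative bound for the observed information, then Slutsky's lemma, uniqueness and Lipschitz continuity of the Cholesky factor, and the chain rule on the log scale. The gap is that the estimate carrying the plug-in case is left as a heuristic, and the route you suggest would not deliver it. Uniformity in $\theta_0$ of $c\Id_3\preceq J_n\preceq C\Id_3$ says nothing about the modulus of continuity of $\theta\mapsto J_n(\theta)$.

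The missing ingredient is the derivative-level structure in \Cref{lem:projection}. Whiten at $\theta$ itself: set $B_a(\theta)=d_{n,a}^{-1}C_n(\theta)^{-1/2}C_{n,a}(\theta)C_n(\theta)^{-1/2}$, with $B_{ab}(\theta)$ defined analogously. Then $[J_n(\theta)]_{ab}=\frac12\tr(B_aB_b)$ and
\[
 \partial_{h_c}[J_n(\theta)]_{ab}
 =\tfrac12\tr\bigl\{-B_cB_aB_b-B_aB_cB_b+B_{ac}B_b+B_aB_{bc}\bigr\}.
\]
Uniformly over all of $\Theta$, \Cref{lem:projection} gives:
\begin{itemize}
\item[$\cdot$] $\norm{B_a}_{\op}\le C\epsilon_n$, which is $O(n^{-1/2})$ for $a=t$;
\item[$\cdot$] $\norm{B_a}_{\Frob}\le C$;
\item[$\cdot$] $\norm{B_{ac}}_{\Frob}\le C\epsilon_n$, and $B_{ac}=0$ if $t$ is involved.
\end{itemize}
Combining these with $|\tr(XYZ)|\le\norm{X}_{\op}\norm{Y}_{\Frob}\norm{Z}_{\Frob}$ and $|\tr(XY)|\le\norm{X}_{\Frob}\norm{Y}_{\Frob}$ bounds every entry of $\nabla_hJ_n(\theta)$ by $C\epsilon_n$. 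The mean value theorem on the convex set $\Theta$ then gives $\norm{J_n(\theta)-J_n}_{\op}\le C\epsilon_n\norm{D_n(\theta-\theta_0)}$ for every $\theta\in\Theta$, with no localization or covariance comparison. These are deterministic traces, so, contrary to how you phrase the obstacle, the plug-in case needs no uniform control of random third derivatives of $l_n$.

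That random control is needed only for the observed information. Your Chebyshev step at $\theta_0$ is exactly \eqref{eq:hessian-local}. Moving to $\thetan$ requires the uniform bound \eqref{eq:third-local} of \Cref{lem:quadratic}, and this is not a routine extension of the trace bound. A whitened quadratic form such as $X^\top B_aB_bB_cX$, bounded naively by $\norm{B_aB_bB_c}_{\op}\norm{X}^2$, is only $O_\PP(n\epsilon_n^3)$. The paper obtains $O_\PP(\epsilon_n)$ by passing to the reduced likelihood on the rank-$r_n$ signal block. There $\sup_{\norm{h}\le H}Z_n^\top A_{\theta_0+D_n^{-1}h}^{-1}Z_n=O_\PP(r_n)$ by covariance comparability, and $r_n\epsilon_n^3=O(\epsilon_n)$. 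You should invoke that lemma rather than leave the bound open.

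A smaller point in (iv): $\widehat\Lambda_n-\Lambda_0=o_\PP(1)$ only yields $o_\PP(1)$ for $D_n^{-1}\widehat\calI_n^{\log}D_n^{-1}-J_n^{\log}$. The claimed $O_\PP(\epsilon_n)$ follows from $\norm{\widehat\Lambda_n-\Lambda_0}_{\op}\le\norm{D_n^{-1}}_{\op}\norm{D_n(\thetan-\theta_0)}=O_\PP(\epsilon_n)$.

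Your part (ii) is valid. Choosing in \eqref{eq:main-an} a square root $M_n$ whose first row is $e_j^\top/[\calI_n(\theta_0)^{-1}]_{jj}^{1/2}$ is slightly more direct than the paper's route through \eqref{eq:mle-expansion} and the last assertion of \Cref{lem:score}.
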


The estimated standard errors thus have relative error
$O_\PP(\epsilon_n)=O_\PP(b_n^{-p/2})$. At moderate $n$, the estimated standard
errors of the spatial parameters therefore carry nonnegligible
uncertainty, which is a further reason to form intervals on the log scale
and exponentiate them; this also keeps them positive. For
$\widehat\calI_n=\calI_n(\thetan)$, the proof gives the explicit bound
$\norm{D_n^{-1}\calI_n(\thetan)D_n^{-1}-J_n}_{\op}
\le C\epsilon_n\norm{D_n(\thetan-\theta_0)}$ for every $\theta_0\in\Theta$.
Combined with \Cref{rem:uniform}, the coverage statements hold uniformly
over $\theta_0$ in compact subsets of $\intt(\Theta)$.

\subsubsection{Random observation locations}
So far, all results are formulated for deterministic observation locations, which allows the likelihood analysis to condition on the design. In many applications, however, the locations themselves are naturally modeled as random. The next corollary shows that the deterministic-design formulation is not a substantive restriction: a broad class of random designs satisfies the required coverage condition (\Cref{ass:coverage}) almost surely, so the preceding results continue to hold.

\begin{corollary}[Independent random sampling]\label{cor:random}
Let $X_1,X_2,\ldots$ be independent and identically distributed locations
in a fixed bounded set $\calX$, independent of the process and the errors.
Suppose their distribution has a Lebesgue density $f$ satisfying
$f(x)\ge f_->0$ for almost every $x$ in a fixed cube $Q\subset\calX$. Write
$\mathbf X=(X_1,X_2,\ldots)$ for the design sequence. The quantities
$\calI_n(\theta_0)$, $J_n$, and $M_n$ in \Cref{thm:rbf} are evaluated at the
realized locations, and are therefore random.
\begin{enumerate}[label=(\alph*)]
\item \textbf{Conditionally on the design.} For almost every realization
of $\mathbf X$, \Cref{ass:coverage} holds, and \Cref{thm:rbf,thm:minimax}
and \Cref{cor:log-scale,cor:plugin} hold conditionally on $\mathbf X$. The
constants $c$ and $C$ in these results do not depend on $\mathbf X$; only
the thresholds $n_0$ and $n_\lambda$ may.
\item \textbf{Unconditionally.} Under the joint law of the design, the
process, and the errors, the following assertions hold.
\begin{enumerate}[label=(\roman*)]
\item \Cref{thm:rbf}(i) holds almost surely for all sufficiently large
$n$.
\item \Cref{thm:rbf}(ii) and (iii) hold.
\item \Cref{thm:rbf}(iv) holds with an additional term $Ce^{-cn^{3/4}}$
on the right-hand side of each tail bound.
\item In \Cref{thm:rbf}(v), the exponential moment bound holds, and
$\E g(M_n(\thetan-\theta_0))\to\E g(Z)$ for every test function $g$ as
in \Cref{thm:rbf}(v). In particular, $D_n\E(\thetan-\theta_0)\to0$ and
$\E[M_n(\thetan-\theta_0)(\thetan-\theta_0)^\top M_n^\top]\to\Id_3$.
\item \Cref{thm:minimax} holds, so the MLE attains the minimax rate.
\item \Cref{cor:plugin} holds, and (i)--(v) above hold on the log scale.
\end{enumerate}
\end{enumerate}
\end{corollary}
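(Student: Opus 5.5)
The plan is to reduce everything to the deterministic-design results by proving that \Cref{ass:coverage} holds almost surely for the random design, with deterministic constants. Fix the cube $Q$ from the hypothesis, of side $L$. Take $\kappa=3/(4p)$ and let $r_n=n^{-\kappa}$. Partition $Q$ into $m_n\asymp (L/r_n)^p=L^pn^{3/4}$ congruent subcubes of side $\asymp r_n$. If every subcube contains at least one of $X_1,\dots,X_n$, then $h_n(Q)\le \sqrt p\,\mathrm{side}\le C_hn^{-\kappa}$, where $C_h$ depends only on $p$ and $L$.

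Each subcube has probability at least $f_-\,\mathrm{vol}\ge c\,n^{-3/4}$. A union bound then gives
\[
\PP\{h_n(Q)>C_hn^{-\kappa}\}\le m_n(1-cn^{-3/4})^n\le Cn^{3/4}e^{-cn^{1/4}}.
\]
This is summable, so Borel--Cantelli shows that \Cref{ass:coverage} holds for all large $n$, almost surely. Because $Q$, $C_h$ and $\kappa$ are deterministic, the constants $c,C$ of \Cref{thm:rbf,thm:minimax} and \Cref{cor:log-scale,cor:plugin} do not depend on $\mathbf X$. Only the threshold beyond which coverage holds depends on the realization, and this yields the claim that only $n_0$ and $n_\lambda$ depend on $\mathbf X$. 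The design is independent of $(F,\varepsilon)$, so conditionally on $\mathbf X$ the data follow exactly the deterministic-design model. This proves part (a).

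For part (b), let $A_n$ be the coverage event at level $n$ for a fixed admissible choice of the exponent. The uniform-in-design versions of the deterministic results give bounds on $A_n$ with constants free of $\mathbf X$. To get the extra term $Ce^{-cn^{3/4}}$ in (iii), I would pick a slightly smaller $\kappa$ (for instance $\kappa=1/(8p)$, so that $m_n\asymp n^{1/8}$). The failure probability is then at most $Cn^{1/8}(1-cn^{-1/8})^n\le Ce^{-cn^{3/4}}$ after adjusting constants. The statements of \Cref{thm:rbf} only require some $\kappa>0$, so this costs nothing.

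Item (i) is immediate from the almost-sure eventual coverage. For (ii) and (iii) I would decompose
\[
\PP(\cdot)=\E[\PP(\cdot\mid\mathbf X)\mathbf 1_{A_n}]+O(\PP(A_n^c)).
\]
Convergence in distribution of $M_n(\thetan-\theta_0)$ then follows by dominated convergence from the conditional bounded-Lipschitz convergence. Here I need the conditional convergence on $A_n$ to be uniform over designs satisfying coverage with the fixed constants. This is the main obstacle: the paper's proofs must control the rate in terms of $(C_h,\kappa)$ only, and not in terms of the particular sequence. I would verify this by inspecting the arguments, as was done for \Cref{rem:uniform}, noting that every bound depends on the design only through $h_n(Q)$.

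Tightness in (ii) transfers the same way. For (iv) the conditional tail bound holds on $A_n$ and is trivially at most one off $A_n$. For (v) I would split on $A_n$. On $A_n^c$, which has probability at most $e^{-cn^{3/4}}$, the crude bound $\norm{D_n(\thetan-\theta_0)}\le C\sqrt n$ from compactness of $\Theta$ gives $e^{\lambda C\sqrt n}e^{-cn^{3/4}}\to0$. Uniform integrability then yields the convergence of moments and of test functions. \Cref{thm:minimax} follows because its lower bound holds conditionally for every design, and its upper bound follows from (v). \Cref{cor:plugin} and the log-scale versions transfer by the identical splitting argument.
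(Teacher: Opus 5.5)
Your proposal is correct and has the same architecture as the paper's proof. Both use a subcube partition of $Q$ with a union bound and Borel--Cantelli, apply the deterministic-design results conditionally with constants depending on the design only through $(\calX,Q,C_h,\kappa)$, and split on the coverage event using $\norm{D_n(\thetan-\theta_0)}\le C\sqrt n$ off it.

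The paper uses a single exponent: $k_n=\lfloor n^{1/(4p)}\rfloor$ subcubes per side, so $\kappa=1/(4p)$. Its failure probability $n^{1/4}e^{-f_-L^pn^{3/4}}$ serves both for Borel--Cantelli in (a) and for the extra term in (b)(iii). Your two choices of $\kappa$ are harmless but unnecessary.

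More usefully, the ``main obstacle'' you name does not arise. Dominated convergence over the design needs only almost sure convergence of bounded conditional quantities, not uniformity over designs. For any statistic $V_n$ whose limit law $V$ does not depend on the design, part (a) gives $\E[\varphi(V_n)\mid\mathbf X]\to\E\varphi(V)$ for almost every $\mathbf X$ and each bounded continuous $\varphi$. Dominated convergence then gives the distributional claims. The $o_{\PP}(1)$ claims go the same way through conditional probabilities. The $O_{\PP}(r_n)$ claims, such as the observed-information bound in \Cref{cor:plugin}, follow from the reverse Fatou lemma applied to $q_{n,M}(\mathbf X)=\PP(\norm{R_n}>Mr_n\mid\mathbf X)\le1$.

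Design-uniform constants are needed only where a bound rather than a limit is transferred: tails, exponential moments, and the minimax upper bound. There they come directly from \Cref{lem:tail}, which holds at every $n\ge n_0$ at which \eqref{eq:coverage} holds. Your uniform route would also work, but it needs quantitative, design-uniform versions of the local quadratic expansion and the score CLT, which is considerably more work. In your uniform-integrability step for test functions of $M_n(\thetan-\theta_0)$, also note that $\norm{M_n(\thetan-\theta_0)}=\norm{J_n^{1/2}D_n(\thetan-\theta_0)}\le C\norm{D_n(\thetan-\theta_0)}$ for every bounded design, since the upper information bounds need no coverage.

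One caution on the minimax lower bound. ``The lower bound holds conditionally for every design'' does not average correctly if read as $\max_\theta\PP_\theta(\cdot\mid\mathbf X)\ge c_0$. The inequality $\max_\theta\E_{\mathbf X}\le\E_{\mathbf X}\max_\theta$ points the wrong way, and the same issue affects the sup-risk form. What averages is the two-point bound in sum form. For every design, the two conditional error probabilities sum to at least $3/4$, with alternatives $\theta_0,\theta_{1,n}^{(j)}$ that do not depend on the design, because $a$ is chosen from information upper bounds valid for all bounded designs. Taking expectations, the unconditional error probabilities also sum to at least $3/4$, so the larger is at least $3/8$.
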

 
The remaining statements of \Cref{thm:rbf}(v) compare the MLE with the
design-dependent matrix $J_n$, through $\E g(J_n^{-1/2}Z)$ and $J_n^{-1}$.
They hold conditionally on the design by part (a). Unconditional versions
would require integrability of $J_n^{-1}$ over the design, which we do not
pursue: near-degenerate designs, in which all locations are close
together, have small but positive probability and make $J_n$ nearly
singular. Part (b)(iv) gives the standardized analogues instead. A
continuous density that is bounded away from zero on a neighborhood of $Q$
satisfies the assumptions of the corollary. Thus, the corollary covers
standard random designs whose sampling density is sufficiently well
behaved on some region of the domain $\calX$.

\subsection{Proof outline and key ideas}\label{sec:outline}

The proof of \Cref{thm:rbf} requires overcoming difficulties that make standard likelihood asymptotics inapplicable. Under fixed-domain asymptotics, the observations become increasingly strongly dependent, so the likelihood cannot easily be decomposed into a growing number of approximately independent contributions. Our main strategy is to exploit the analytic structure of the RBF kernel to recover, from the observations, a growing collection of latent Gaussian components whose conditional variance structure makes their information about the kernel parameters tractable. This leads to a global separation bound for
the observation laws at the precise scales of the optimal convergence rates. After this global step is established, consistency is obtained via the method of \citet{ibragimov1981}, and then a Taylor expansion of the local log likelihood is used to derive the asymptotic expansion and
distribution of the MLE.

\textbf{The power series of the field.}
Writing $\norm{x-y}^2=\norm x^2+\norm y^2-2\langle x,y\rangle$ and expanding
$e^{\langle x,y\rangle/l^2}$ by the multinomial theorem gives
\[
 K_\theta(x,y)=s\sum_{\alpha\in\N_0^p}\phi_{\alpha,l}(x)\phi_{\alpha,l}(y),
 \qquad
 \phi_{\alpha,l}(x)=e^{-\norm x^2/(2l^2)}\frac{x^\alpha}{l^{|\alpha|}\sqrt{\alpha!}}.
\]
Hence $F$ can be realized as the entire random function
\[
 F(x)=\sqrt s\,e^{-\norm x^2/(2l^2)}\sum_{\alpha\in\N_0^p}
 \xi_\alpha\frac{x^\alpha}{l^{|\alpha|}\sqrt{\alpha!}},
 \qquad\xi_\alpha\ \text{independent}\ N(0,1).
\]
Its normalized Taylor coefficients at a fixed point,
$\zeta_\alpha=\sqrt{\alpha!}\,[x^\alpha]F(x)$, are linear in the
$\xi_\gamma$. Multiplication by $e^{-\norm x^2/(2l^2)}$ raises degrees in
steps of two, so $\zeta_\alpha$ depends only on $\xi_\alpha$ and on
variables $\xi_\gamma$ of strictly lower degree. In degree order, the map
from the $\xi$'s to the $\zeta$'s is therefore lower triangular with
diagonal entries $\sqrt s\,l^{-|\alpha|}$. Consequently, given all normalized
coefficients of lower degree, $\zeta_\alpha$ is normal with variance
$s\,l^{-2|\alpha|}$. On the log scale, this conditional variance is
$\log s-2|\alpha|\log l$, which is linear in the degree with intercept
$\log s$ and slope $-2\log l$.

This triangular representation is the key structural feature of the proof: it reveals that the highly dependent Gaussian field is a relatively simple function of a sequence of Gaussian variables with simple conditional variance structure. If these normalized coefficients were directly observed, the different information scales for $s$ and $l$ would already be visible from this linear dependence on the degree. The difficulty is that the coefficients are latent, and not all of them can be recovered accurately from the $n$ noisy observations at the irregular locations.

\textbf{Estimating the normalized coefficients.}
The normalized coefficients are not observed, but they can be estimated by a fixed
linear statistic of the data. Recovering a growing number of them with sufficiently small error is a
central technical step, particularly because we assume only polynomial
coverage and allow irregular, clustered, and repeated observation
locations (\Cref{ass:coverage}). We fit a polynomial of
total degree $K_n\asymp b_n = \log n/\log\log n$ to the data by a discrete weighted least squares regression onto
a Legendre basis for $L^2(Q)$, and then read off its normalized Taylor coefficients. We show that all normalized coefficients through degree
$m_n\asymp b_n$ are recovered with error $n^{-c}$,
uniformly in $\theta$ (\Cref{lem:recovery}). There are on the order of $b_n^p$ such
coefficients. The estimator $\widehat\zeta^{(m_n)}=H_nY_n$ of the exact normalized coefficients $\zeta^{(m_n)} = (\zeta_{\alpha})_{|\alpha| \leq m_n}$ does not depend on
$\theta$. The fact that $H_n$ is independent of $\theta$ is essential: if one can distinguish parameters using the observed $\hat{\zeta}^{(m_n)} = H_nY_n$, as we will show, then one can distinguish between parameters using the full data $Y_n$.

\textbf{From coefficient recovery to global likelihood separation.}
For laws $P$ and $Q$, the affinity $\Aff(P,Q)=\int\sqrt{dP\,dQ}$ is the
Bhattacharyya coefficient, and $-\log\Aff(P,Q)$ is the Bhattacharyya
distance~\citep{bhattacharyya1943}. The lower triangular representation of $\zeta^{(m_n)}$ allows us to derive an explicit lower bound on the
Bhattacharyya distance between its laws under $\theta$ and $\theta_0$. The uniform recovery result then transfers this bound from the latent coefficients $\zeta^{(m_n)}$ to their estimators $\widehat\zeta^{(m_n)}$. Finally, because $\widehat\zeta^{(m_n)}=H_nY_n$ is a parameter-independent statistic of the observations, the data processing inequality transfers the bound to the observation laws themselves. This yields a global separation bound in terms of $(\log s-\log s_0)^2$ and
$(\log l-\log l_0)^2$.

For the nugget, the factorial decay in the Taylor series also makes the
kernel matrix numerically of rank $O(b_n^p)$ along a common subspace,
uniformly in $\theta$ (\Cref{lem:projection}). The data component $W_n$
orthogonal to that subspace is essentially white noise with variance
$t$, and contributes a Bhattacharyya distance of order $n(\log t-\log t_0)^2$. 

The proof therefore separates two fundamentally different sources of
information: only $O(b_n^p)$ effective signal directions carry
information about the spatial parameters, whereas the remaining
directions provide essentially $n$ independent pieces of information
about the nugget. This distinction is what ultimately produces the
logarithmic rates for $(s,l)$ and the parametric $n^{-1/2}$ rate for $t$.

Putting the signal and nugget Bhattacharyya distance bounds together yields that in
rescaled coordinates $h=D_n(\theta-\theta_0)$, for all $\theta\in\Theta$,
\[
 -\log\Aff(P_{\theta,n},P_{\theta_0,n})
 \ge c\bigl\{\min(\norm{h_\spat}^2,b_n^{p/2}\norm{h_\spat})+h_t^2\bigr\}-o(1).
\]
In the case of i.i.d. data, such a lower bound comes very easily, but in this case, it is one of the hardest parts of the proof. Crucially, the bound holds uniformly over the entire parameter space, that is, the bound is uniform in $\theta_0$.

\textbf{From global separation to the limit theorem.}
The global separation bound first controls the global maximizer of the
likelihood. A chaining argument based on \citet{ibragimov1981} converts
this separation into the exponential tail bounds for
$\widehat h_n=D_n(\thetan-\theta_0)$ in \Cref{thm:rbf}(iv), thereby
establishing consistency at the rates \eqref{eq:rates} before any local
likelihood approximation is invoked. This localization is essential:
without first ruling out distant likelihood maximizers, a quadratic
expansion around $\theta_0$ alone cannot establish the behavior of the
global MLE.

Once the MLE has been localized, we establish a uniform quadratic expansion of the log likelihood on the relevant rescaled neighborhoods (\Cref{lem:quadratic}). The normalized score is a vector of Gaussian quadratic forms whose individual spectral contributions vanish, allowing
a Lyapunov central limit theorem argument to establish asymptotic normality of the normalized score, despite the strong dependence of the original observations. Together, these three ingredients---global separation and localization,
a uniform local quadratic approximation, and a central limit theorem for the normalized score---yield \eqref{eq:mle-expansion} and the joint asymptotic normality in \Cref{thm:rbf}(iii).

 \section{Simulation}\label{sec:simulation}

 We examine how closely the asymptotic theory in \Cref{thm:rbf} describes
 the MLE at finite sample sizes. Because the convergence rates for
 $\sigma^2$ and $l$ are logarithmic, we consider sample sizes spanning several orders of magnitude.
 Code to reproduce all results in this section, together with the
 simulation output, is available at
 \url{https://github.com/Ameer-eng/Simulation-Studies-for-Euclidean-RBF-MLE}.

\subsection{Simulation setup}\label{sec:sim-setup}

\textbf{Model and designs.}
We simulate from \eqref{eqn:observation} with the RBF kernel on the unit
cube $\calX=[0,1]^p$, for $p\in\{1,2,3\}$, with true parameter
\[
 \theta_0=(\sigma_0^2,l_0,\tau_0^2)=(1,\ 0.25,\ 0.01).
\]
This gives a signal-to-noise ratio $\sigma_0^2/\tau_0^2=100$, and the
lengthscale is one quarter of the side length of the domain. The locations
form a regular midpoint grid with $G$ points per axis,
\[
 x\in\{(i-\tfrac12)/G:\ i=1,\ldots,G\}^p,
\]
so that $n=G^p$. This design satisfies \Cref{ass:coverage} with
$\kappa=1/p$. For each $p$, we use nine sample sizes ranging from
approximately $10^2$ to $10^6$:
\begin{align*}
 G&\in\{10^2,\,3\cdot10^2,\,10^3,\ldots,3\cdot10^5,\,10^6\},
 &&p=1,\\
 G&\in\{10,18,32,56,100,178,316,562,1000\},
 &&p=2,\\
 G&\in\{5,7,10,14,22,32,46,68,100\},
 &&p=3.
\end{align*}
For every $(p,n)$, we generate $1000$ independent data sets, each with
a new realization of the field $F$ and measurement errors.

\textbf{Likelihood evaluation and MLE.}
Both data generation and likelihood evaluation exploit the power-series
representation of the RBF kernel and, for $p\ge2$, the Kronecker structure
of the regular grid. This avoids forming the full $n\times n$ covariance
matrix and makes likelihood optimization feasible for sample sizes up to
$n=10^6$. The MLE $\widehat\theta_n$ is computed over
\[
 \Theta=[10^{-2},10^2]\times[0.05,2]\times[10^{-4},1],
\]
which contains $\theta_0$ well inside its interior. Of the
$2.7\times10^4$ fitted data sets, only three give an estimate on the
boundary of $\Theta$: one estimate of $l$ for $p=1$ and $n=10^2$, and two
estimates of $\tau^2$ for $p=3$ and $n=125$.
Details of the likelihood computation, optimization,
and numerical validation are provided in Appendix~\ref{apdx:sim}.

\textbf{Error summaries.}
For each parameter $\theta_j\in\{\sigma^2,l,\tau^2\}$, $j=1,2,3$, we
report three quantities as functions of $n$.
\begin{enumerate}[label=(\alph*)]
\item The \emph{root MSE} (RMSE),
\[
 \mathrm{RMSE}_j
 =
 \left\{
 \frac1R\sum_{r=1}^R
 (\widehat\theta_j^{(r)}-\theta_{0j})^2
 \right\}^{1/2},
 \qquad R=1000.
\]
By \Cref{thm:rbf}(v), $\mathrm{RMSE}_j$ has the same order as the
corresponding entry of $D_n^{-1}$.

\item The \emph{robust standard deviation} $\mathrm{IQR}_j/1.349$, where
$\mathrm{IQR}_j$ is the interquartile range of the replicate errors
$\widehat\theta_j^{(r)}-\theta_{0j}$. The factor $1.349$ is the
interquartile range of a standard normal distribution, so
$\mathrm{IQR}/1.349$ equals the standard deviation under normality.
Unlike the RMSE, this quantity is insensitive to a small number of
extreme estimates.

\item The \emph{asymptotic standard deviation}
$[\calI_n(\theta_0)^{-1}]_{jj}^{1/2}$, where $\calI_n(\theta_0)$ is the
Fisher information in \eqref{eq:fisher}. By
\Cref{thm:rbf}(iii)--(v), the MLE is approximately normal with this
standard deviation, whose order is
$b_n^{-p/2}$, $b_n^{-(p+2)/2}$, and
$\sqrt{2}\tau_0^2n^{-1/2}$ for $\sigma^2$, $l$, and $\tau^2$,
respectively. This quantity is computed exactly for the actual grid
design at each $n$.
\end{enumerate}
Bootstrap $95\%$ intervals for the first two quantities are obtained by
resampling the replicates.

\textbf{Log parametrization.}
We report the results on the log scale. As shown in
\Cref{cor:log-scale}, the log transformation preserves the convergence
rates and yields the parameter-free nugget limit
\[
 \sqrt n(\log\widehat\tau^2-\log\tau_0^2)
 \Longrightarrow N(0,2).
\]
Results on the original scale are reported in Appendix~\ref{apdx:sim-original}
and lead to the same conclusions about the convergence rates.

\begin{figure}[!h]
\centering
\includegraphics[width=\textwidth]{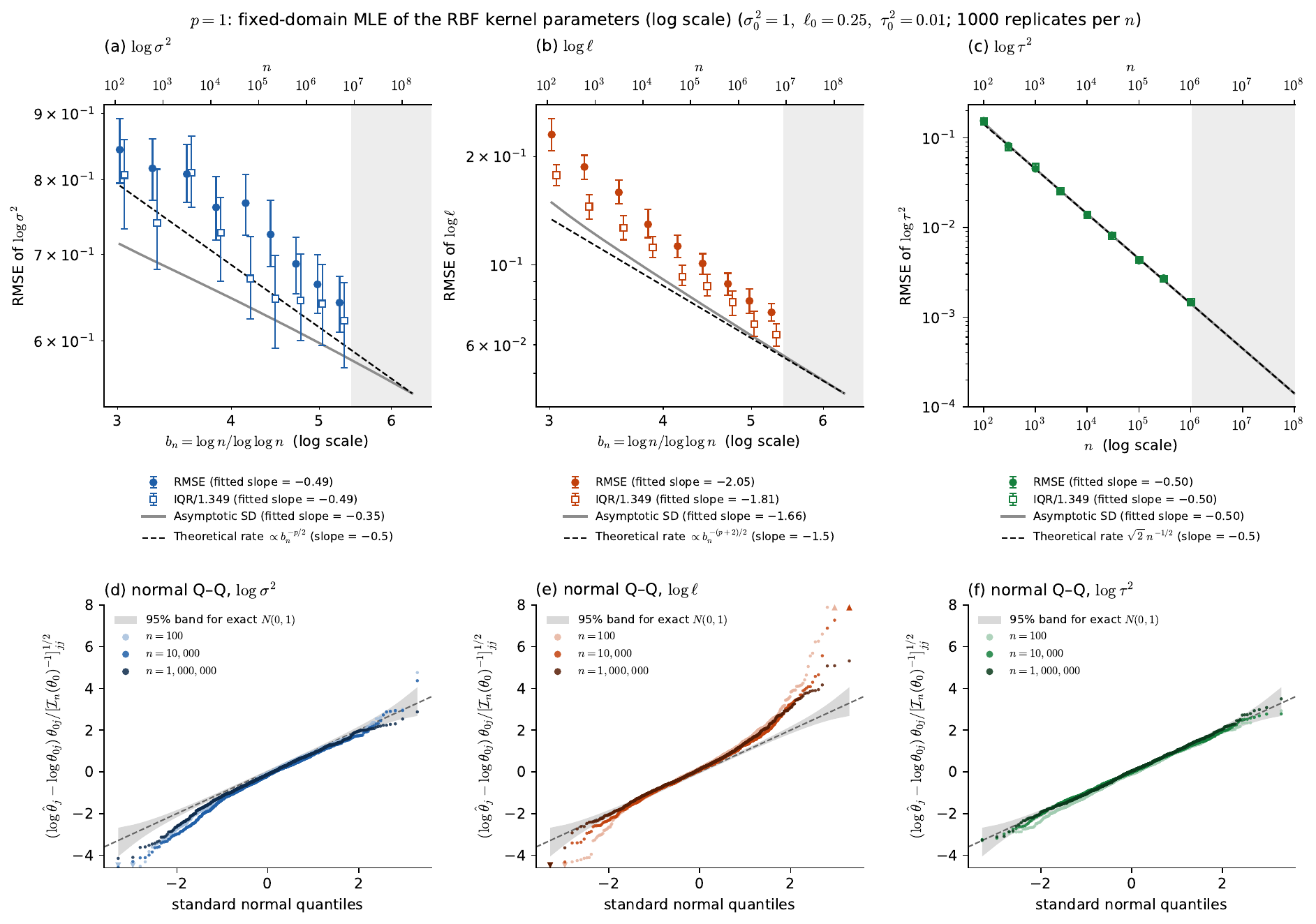}
\caption{Simulation results for $p=1$ on the log scale, with
$\theta_0=(1,0.25,0.01)$ and $1000$ replicates per $n$. Top: RMSE
(filled circles), robust standard deviation $\mathrm{IQR}/1.349$
(open squares), both with bootstrap $95\%$ intervals, and the exact
asymptotic standard deviation
$[\calI_n(\theta_0)^{-1}]_{jj}^{1/2}/\theta_{0j}$ (grey line). The grey region indicates sample sizes beyond the Monte Carlo range,
where repeated MLE computation is computationally infeasible but the Fisher information can still be evaluated. Dashed lines
show the theoretical rates in \Cref{thm:rbf}(ii). Bottom: normal Q--Q plots of the
standardized errors, with a $95\%$ band for exact normality; triangles
mark points outside the frame.}
\label{fig:sim-p1}
\end{figure}

\begin{figure}[!h]
\centering
\includegraphics[width=\textwidth]{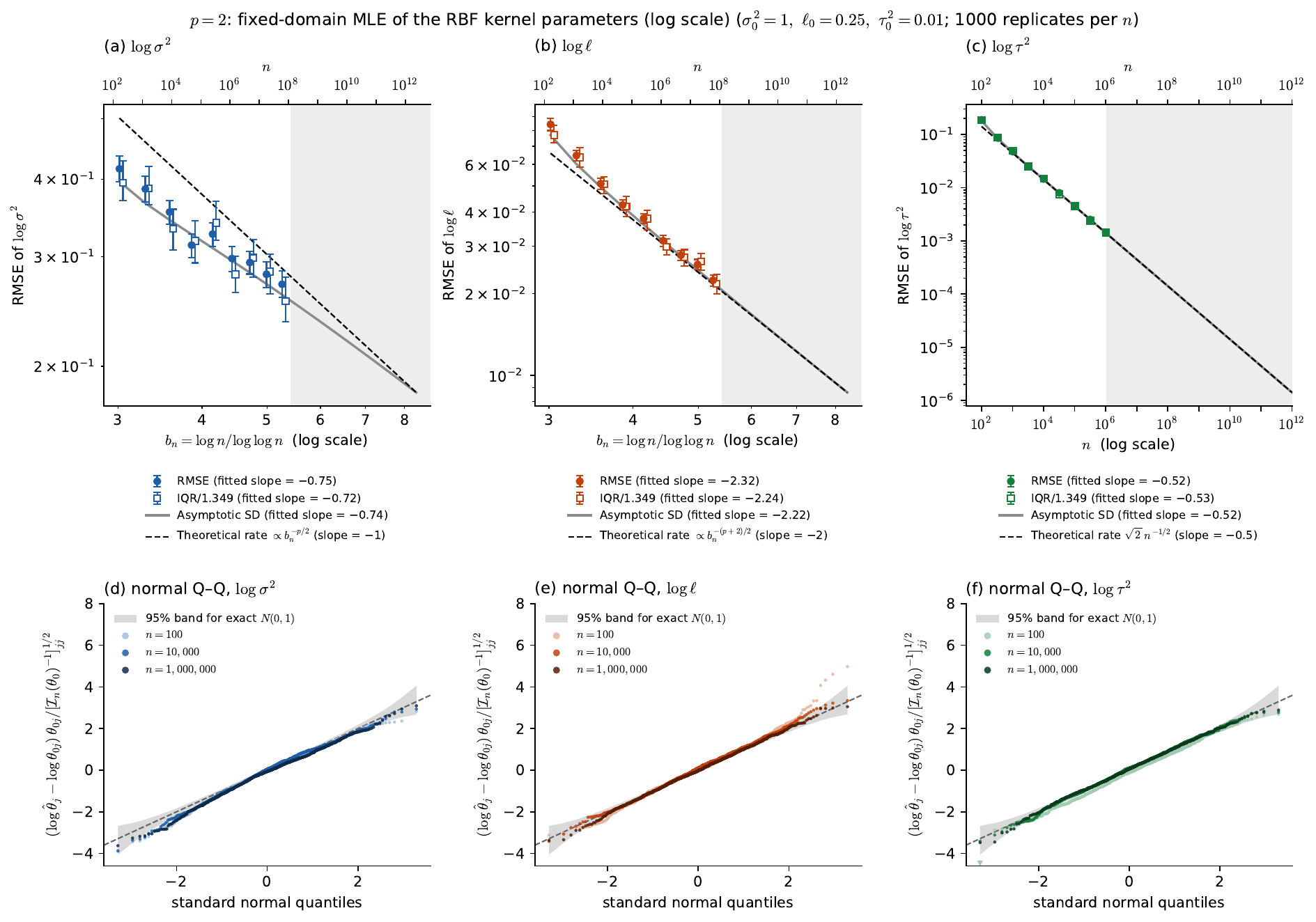}
\caption{Simulation results for $p=2$, with the layout and graphical elements as in \Cref{fig:sim-p1}.}
\label{fig:sim-p2}
\end{figure}

\begin{figure}[!h]
\centering
\includegraphics[width=\textwidth]{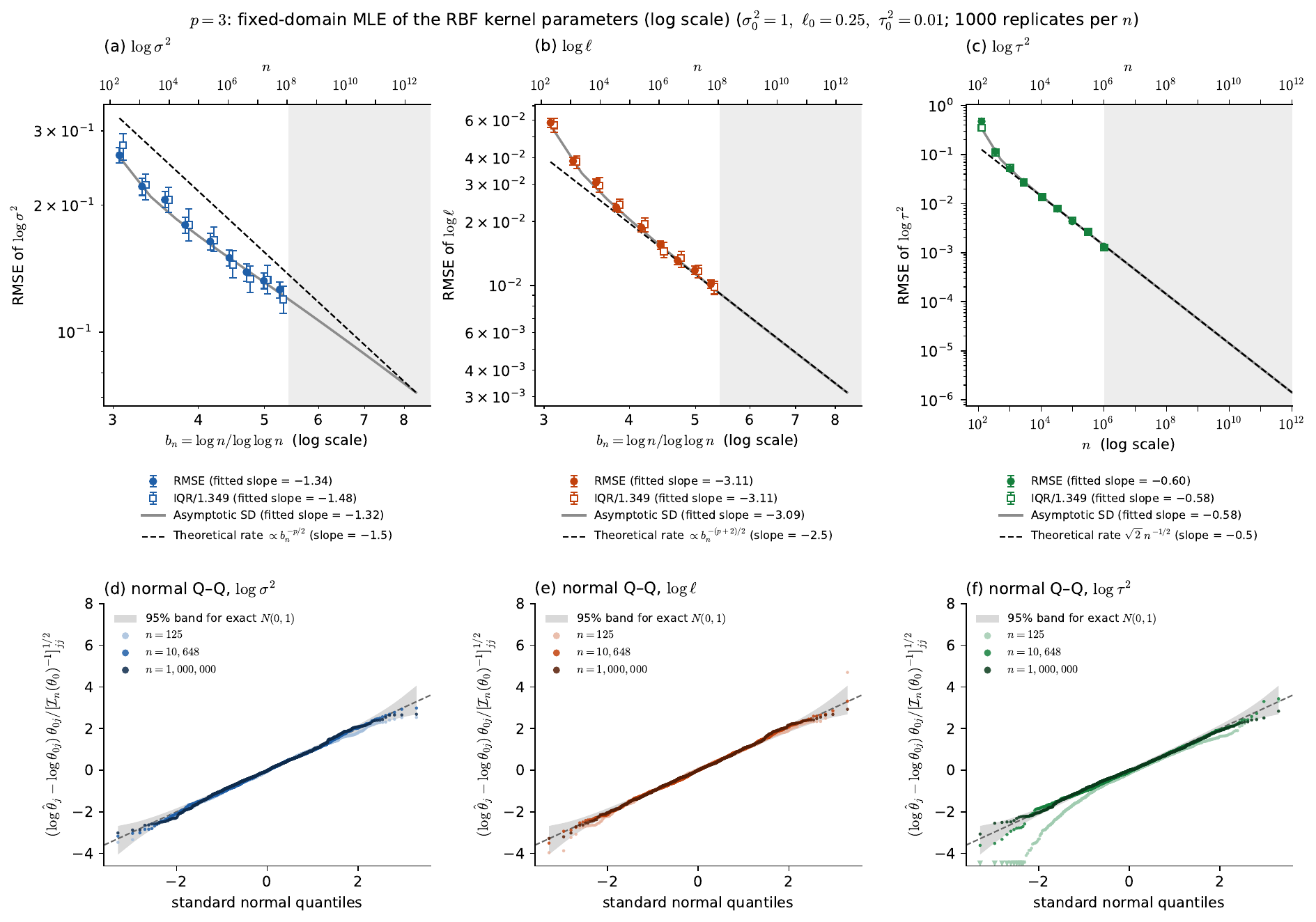}
\caption{Simulation results for $p=3$, with the layout and graphical elements as in \Cref{fig:sim-p1}.}
\label{fig:sim-p3}
\end{figure}

\subsection{Simulation results}\label{sec:sim-results}
\textbf{Graphical summaries.}
\Cref{fig:sim-p1,fig:sim-p2,fig:sim-p3} show the results for
$p=1,2,3$. The top row compares the Monte Carlo RMSE and robust standard
deviation with the asymptotic standard deviation and the theoretical
rates in \Cref{thm:rbf}(ii). The bottom row shows normal Q--Q plots of
the standardized errors at three representative sample sizes, providing
a direct assessment of the asymptotic normality in
\Cref{thm:rbf}(iii).

\textbf{The rates agree with the theory.}
In all three dimensions and for all three parameters, the Monte Carlo
errors decrease at rates consistent with \Cref{thm:rbf} and closely track
the exact asymptotic standard deviations. For $p=3$, the RMSEs of
$\log\widehat\sigma^2$ and $\log\widehat l$ are within $7\%$ of their
asymptotic standard deviations at every $n$ from $125$ to $10^6$. For
$p=2$, they are within $10\%$ at every $n$ and within $6\%$ for
$n\ge10^3$. For $p=1$, the RMSEs are somewhat larger but approach the
asymptotic standard deviations as $n$ increases.

\Cref{tab:sim-slopes} compares slopes fitted to the simulated errors with
those of the exact asymptotic standard deviation over the same range of
$n$, as well as with the limiting exponents in \Cref{thm:rbf}. For
$p=2,3$, the simulated RMSE slopes agree with those of the asymptotic standard
deviation to within $0.1$. The finite-sample slopes need
not yet equal their limiting values because $b_n=\log n/\log\log n$
grows extremely slowly; this issue is examined further in
Appendix~\ref{apdx:sim-spatial}.

\begin{table}[!h]
\centering
\caption{Least-squares slopes of $\log(\text{error})$ against $\log b_n$ for $\log\sigma^2$ and $\log l$, and against $\log n$ for $\log \tau^2$, over the nine simulated sample sizes ($10^2\lesssim n\le10^6$). The last
column gives the theoretical rate shown in \Cref{thm:rbf}.}
\label{tab:sim-slopes}
\vspace{3pt}
\begin{tabular}{llrrrr}
\toprule
& & RMSE & $\mathrm{IQR}/1.349$ & Asymptotic SD & Theory\\
\midrule
$\log\sigma^2$ & $p=1$ & $-0.49$ & $-0.49$ & $-0.35$ & $-0.5$\\
 & $p=2$ & $-0.75$ & $-0.72$ & $-0.74$ & $-1$\\
 & $p=3$ & $-1.34$ & $-1.48$ & $-1.32$ & $-1.5$\\
\midrule
$\log l$ & $p=1$ & $-2.05$ & $-1.81$ & $-1.66$ & $-1.5$\\
 & $p=2$ & $-2.32$ & $-2.24$ & $-2.22$ & $-2$\\
 & $p=3$ & $-3.11$ & $-3.11$ & $-3.09$ & $-2.5$\\
 \midrule
$\log \tau^2$ & $p=1$ & $-0.50$ & $-0.50$ & $-0.50$ & $-0.5$\\
 & $p=2$ & $-0.52$ & $-0.53$ & $-0.52$ & $-0.5$\\
 & $p=3$ & $-0.60$ & $-0.58$ & $-0.58$ & $-0.5$\\
\bottomrule
\end{tabular}
\end{table}

The contrast between the spatial parameters and the nugget is substantial.
From $n=10^2$ to $n=10^6$, the RMSE of $\log\widehat\sigma^2$ decreases
only by factors of $1.3$, $1.5$, and $2.1$ for $p=1,2,3$, respectively,
whereas the RMSE of $\log\widehat\tau^2$ decreases by factors of about
$110$, $130$, and $360$. This is the slow logarithmic behavior predicted by the
theory.

\textbf{The errors are close to normal.}
For $p=2$ and $p=3$, the standardized errors of
$\log\widehat\sigma^2$ and $\log\widehat l$ lie essentially within the
$95\%$ band for exact normality at the displayed sample sizes. The RMSE
and robust standard deviation are also very similar, providing further
evidence for the Fisher-normalized limit in \eqref{eq:main-an}. The
nugget errors show similarly good agreement except at the smallest
sample size for $p=3$. Convergence to normality is slower for $p=1$,
particularly for the lengthscale, where a heavier right tail remains
visible even at the largest simulated sample sizes. Additional normality
diagnostics are reported in Appendix~\ref{apdx:sim-normality}.

\textbf{The nugget is the easiest parameter.}
The nugget variance is estimated at the parametric rate. In every
dimension,
$\sqrt n\,\mathrm{RMSE}(\log\widehat\tau^2)$ is within $6\%$ of its
limit $\sqrt2$ for all $n\ge3\times10^3$. On the original scale, for
example,
\[
 \sqrt n\,\mathrm{RMSE}(\widehat\tau^2)=0.0143
\]
at $n=10^6$ for $p=2$, compared with the theoretical limit
$\sqrt2\,\tau_0^2=0.01414$. The approximation is less accurate at the
smallest sample sizes, especially for $p=3$. This behavior is consistent
with the low-rank structure underlying \Cref{thm:rbf}: most directions
eventually contain essentially only nugget noise, allowing $\tau^2$ to
be estimated at the usual parametric rate. A more detailed finite-sample
explanation is given in Appendix~\ref{apdx:sim-nugget}.

\textbf{Convergence rates of the spatial parameters are slow.}
For the spatial parameters $\sigma^2$ and $l$, the effective amount of information grows
with the number of recoverable Taylor coefficients, whose maximal degree
is of order $b_n=\log n/\log\log n$. This quantity grows very slowly:
over $10^2\le n\le10^6$, it increases only from approximately $3.0$ to
$5.3$. Consequently, the finite-sample slopes in
\Cref{tab:sim-slopes} need not closely match the limiting exponents even
when the MLE already tracks the Fisher-information benchmark well.
Indeed, the exact asymptotic standard deviation itself exhibits the same
finite-sample deviations from the limiting slopes. Further analysis of
these effects is provided in Appendix~\ref{apdx:sim-spatial}.

The different spatial rates are nevertheless already clearly visible.
Since $D_n$ contains an additional factor $b_n$ for $l$, the lengthscale
is estimated more accurately than the spatial variance by a factor of
order $b_n$, in agreement with the simulations.

\textbf{Higher dimensions help.}
The rates $b_n^{-p/2}$ and $b_n^{-(p+2)/2}$ imply that estimation of
$\sigma^2$ and $l$ becomes easier as the dimension $p$ increases. This
may initially appear counterintuitive, since all observations still come
from a single realization of the field. The explanation is that a
higher-dimensional RBF field contains more recoverable Taylor
coefficients. The number of coefficients of degree at most $m$ is
\[
 N_p(m)=\binom{m+p}{p},
\]
which is asymptotically of order $m$, $m^2/2$, and $m^3/6$ for
$p=1,2,3$, respectively. Thus a single realization contains more
independent information about the spatial kernel parameters as $p$
increases. This is reflected both in the smaller estimation errors and
in the improved normal approximation from $p=1$ to $p=3$. In contrast,
the nugget information is of order $n$ in every dimension.

\section{Discussion and future work}\label{sec:discussion}

In this article, we establish an asymptotic theory for MLEs of RBF kernel parameters under fixed-domain asymptotics. We prove consistency of the joint MLE of the spatial variance, lengthscale, and nugget variance, derive their convergence rates, establish joint asymptotic normality, and show that all three rates are minimax optimal. To the best of our knowledge, these results provide the first complete asymptotic characterization of the joint MLE for the RBF kernel in this setting. The results provide theoretical justification for the MLE routinely used in GP software. At the same time, they reveal an important limitation: while the nugget variance converges at the standard parametric rate $n^{-1/2}$, the spatial variance and lengthscale converge only at logarithmic rates. Since these rates are sharp, the slow convergence is not merely an artifact of the MLE or our analysis. In practice, substantial uncertainty in the estimated spatial variance and lengthscale may therefore remain, even with a large number of densely sampled observations, and these estimates should be interpreted with caution when they are used for inference or downstream tasks.

Beyond classical likelihood inference, our theoretical results may also inform modern spatial machine learning techniques, such as the Bayesian predictive stacking framework introduced by \citet{zhang2026bayesian}. Predictive stacking bypasses full posterior sampling by optimizing ensemble weights across a discretized parameter grid using predictive scoring rules. Our analysis establishes explicit convergence rates, namely $b_n^{-p/2}$ for the spatial variance, $b_n^{-(p+2)/2}$ for the lengthscale, and $n^{-1/2}$ for the nugget, together with exponential tail bounds. These suggest a principled guide for constructing the parameter grid: matching the grid spacing for each parameter to its convergence rate should allow the stacking weights to concentrate around the true parameter values without over-discretization, and without MCMC, even in high-dimensional spatial settings. Making this precise is an interesting future direction.

There are, however, some limitations of our study that motivate important directions for future work. First, our results concern the exact MLE, whose computation requires $O(n^3)$ operations and becomes prohibitive for large $n$. A large literature has developed scalable GP approximations in spatial statistics and nonparametric regression, including inducing-point, variational inference, nearest-neighbor, and Vecchia-type methods, which replace the exact likelihood with computationally tractable approximations \citep[see, e.g.,][and references therein]{quinonero2005unifying, banerjee2008gaussian, titsias2009variational, datta2016hierarchical, szabo2026vecchia}. Focused reviews of different classes of scalable GPs are provided by \citet{banerjee2017high} and \citet{liu2020gaussian}. Our simulation studies could compute the exact MLE at the sample sizes needed to examine the asymptotic behavior only by exploiting special structure, namely the power-series representation of the RBF kernel and, for $p\ge2$, the Kronecker structure of a regular grid (Appendix~\ref{apdx:sim-lik}); such structure is not available for general kernels and designs. Thus, an important next question is how these approximations affect kernel parameter inference. In particular, it remains to determine which scalable approximations yield consistent estimators, whether they preserve the convergence rates and asymptotic distributions of the exact MLE, and how the approximation accuracy or computational complexity must scale with $n$ to retain these statistical properties. Conversely, aggressive approximations may lose information about the kernel parameters and lead to slower rates or even inconsistency. Developing a statistical theory connecting computational approximation to kernel parameter inference would, therefore, provide useful guidance for scalable GP methodology.

A second limitation is that our analysis is specific to the RBF kernel. A key ingredient of our proof is its analytic structure, which allows a growing collection of normalized Taylor coefficients to be recovered from the observations with sufficiently small error. This argument does not extend directly to kernels with finite smoothness, most notably the widely used Mat\'ern family.    While \citet{tang2021identifiability} established the consistency of the
   MLEs of the microergodic parameter (see
   \Cref{sec:related-estimation}) and the nugget for the Mat\'ern kernel
   with $p\le3$ and a known smoothness parameter under additional
   strong assumptions, a complete asymptotic theory of the joint MLE remains an open
   challenge requiring different mathematical tools. More generally, understanding how kernel smoothness determines the amount of recoverable information and the resulting behavior of likelihood-based kernel parameter estimation is an important direction for future work.

Finally, our theoretical framework assumes a bounded Euclidean domain $\calX\subset\R^p$. Extending sharp fixed-domain MLE asymptotics for the RBF kernel to non-Euclidean spaces, such as compact Riemannian manifolds, building upon recent advances in parameter identifiability of manifold GPs~\citep{li2023inference}, presents another promising avenue for future research.

\appendix

\section*{Appendices: Overview}

In Appendix \ref{app:statements}, we state key lemmas to prove the main theorems, together with auxiliary lemmas to prove the key lemmas. In Appendix \ref{app:proof-main}, we prove the main theorems and their corollaries using the lemmas in Appendix \ref{app:statements}. In Appendix \ref{app:lemma-proofs}, we prove all lemmas in Appendix \ref{app:statements}. Finally, in Appendix \ref{apdx:sim}, we present additional experimental details for simulations in \Cref{sec:simulation}.

\setcounter{section}{0}

\section{Lemmas}\label{app:statements}

This appendix first records elementary Gaussian facts and a maximal inequality for continuous processes, and then states the auxiliary lemmas used in the proofs of \Cref{thm:rbf,thm:minimax} and their corollaries.
The lemma proofs appear in \Cref{app:lemma-proofs}. All design matrices and linear statistics introduced below are deterministic functions of the locations. 

\subsection{Gaussian affinity and elementary bounds}\label{app:gaussian-affinity}

For probability laws $P,Q$, define
\[
 \Aff(P,Q)=\int\sqrt{dP\,dQ},
 \qquad \TV(P,Q)=\sup_A|P(A)-Q(A)|.
\]
The affinity is also called the Bhattacharyya coefficient, and
$-\log\Aff(P,Q)$ is the Bhattacharyya distance.
For positive definite $d\times d$ covariance matrices $A,B$, abbreviate
$\Aff(A,B)=\Aff(N_d(0,A),N_d(0,B))$. Direct Gaussian integration gives
\begin{equation}\label{eq:gaussian-affinity}
 \Aff(A,B)=
 \frac{\det(A)^{1/4}\det(B)^{1/4}}
 {\det((A+B)/2)^{1/2}}.
\end{equation}
Applying the same statistic to both laws cannot decrease their
affinity.

\begin{lemma}[Monotonicity of affinity]\label{lem:affinity-monotone}
Let $P,Q$ be probability laws on a measurable space
$(\mathcal Y,\mathcal F)$, and let $T:\mathcal Y\to\mathcal Y'$ be a
measurable map into another measurable space. Then
\[
 \Aff(P\circ T^{-1},Q\circ T^{-1})\ge\Aff(P,Q).
\]
In particular, if $Y\sim N_d(0,A)$ under one law and $Y\sim N_d(0,B)$ under
the other, and $H$ is a fixed $r\times d$ matrix of rank $r$, then
$\Aff(HAH^\top,HBH^\top)\ge\Aff(A,B)$.
\end{lemma}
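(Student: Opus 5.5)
The plan is to reduce the claim to the Cauchy--Schwarz inequality applied to conditional densities, and then specialize to Gaussians.

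\textbf{Reduction to a common dominating measure.} Let $\mu=P+Q$, and write $p=dP/d\mu$ and $q=dQ/d\mu$, so that $\Aff(P,Q)=\int\sqrt{pq}\,d\mu$. The pushforward $\nu=\mu\circ T^{-1}$ dominates both $P\circ T^{-1}$ and $Q\circ T^{-1}$. Their densities with respect to $\nu$ are the conditional expectations
\[
\bar p(y')=\E_\mu[p\mid T=y'],\qquad \bar q(y')=\E_\mu[q\mid T=y'].
\]
These are well defined because $\mu$ is finite, and they are obtained via Radon--Nikodym derivatives of $A'\mapsto\int_{T^{-1}A'}p\,d\mu$ with respect to $\nu$.

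\textbf{Cauchy--Schwarz step.} Conditional Cauchy--Schwarz gives
\[
\E_\mu[\sqrt{pq}\mid T]\le\sqrt{\E_\mu[p\mid T]\,\E_\mu[q\mid T]}
\]
$\mu$-a.s. Integrating this against $\mu$ and using the tower property yields
\[
\Aff(P,Q)=\int\sqrt{pq}\,d\mu\le\int\sqrt{\bar p\circ T}\,\sqrt{\bar q\circ T}\,d\mu=\int\sqrt{\bar p\bar q}\,d\nu=\Aff(P\circ T^{-1},Q\circ T^{-1}).
\]
The one point needing care is that conditional Cauchy--Schwarz holds in the form with regular versions. One can avoid that technicality by arguing directly on the $\sigma$-algebra $\sigma(T)$: since $\sqrt{pq}\le(\lambda p+\lambda^{-1}q)/2$ for every rational $\lambda>0$, taking conditional expectations and then the infimum over rational $\lambda$ gives the bound almost surely.

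\textbf{Gaussian specialization.} If $Y\sim N_d(0,A)$, then $HY\sim N_r(0,HAH^\top)$, and likewise for $B$. Because $H$ has rank $r$, the matrices $HAH^\top$ and $HBH^\top$ are positive definite, so the abbreviation $\Aff(HAH^\top,HBH^\top)$ via \eqref{eq:gaussian-affinity} is meaningful. Taking $T(y)=Hy$ in the general statement then gives the claim.

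No step here is a real obstacle; the only subtlety is the measure-theoretic justification of the conditional Cauchy--Schwarz step, which the rational-$\lambda$ argument handles.
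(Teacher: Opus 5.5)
Your proposal is correct and follows the paper's proof essentially verbatim. Both dominate the two laws by (a multiple of) $P+Q$, identify the densities of the pushforwards as conditional expectations given $T$, apply conditional Cauchy--Schwarz and integrate, and then take $T(y)=Hy$ for the Gaussian case. The only differences are cosmetic: you use $P+Q$ where the paper uses $(P+Q)/2$, and you prove conditional Cauchy--Schwarz via the rational-$\lambda$ AM--GM argument, which the paper takes for granted.
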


In the applications, $T$ is a linear statistic of the data that does not
depend on the parameter. Both uses are in the proof of
\Cref{lem:affinity}: the coefficient statistic of \Cref{lem:recovery}, and
the noise block $W_n$.

The next two lemmas are elementary. \Cref{lem:qf-moments} controls all
moments of the scores and is used in \Cref{lem:lr-process};
\Cref{lem:fraction} is used in the proof of \Cref{lem:affinity} to show
that a fixed fraction of polynomial degrees is separated.

\begin{lemma}[Moments of Gaussian quadratic forms]\label{lem:qf-moments}
Let $X\sim N_d(0,\Id_d)$ and let $B$ be a symmetric $d\times d$ matrix.
For every $q\ge1$ there is $C_q$, depending only on $q$, such that
$\E|X^\top BX-\tr B|^q\le C_q\norm{B}_\Frob^q$.
\end{lemma}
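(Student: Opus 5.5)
We diagonalize $B$ and reduce to a weighted sum of independent centered chi-square variables. Write $B=U\Lambda U^\top$ with $U$ orthogonal and $\Lambda=\diag(\lambda_1,\ldots,\lambda_d)$. Since $U^\top X\sim N_d(0,\Id_d)$ by rotation invariance, we obtain the distributional identity
\[
X^\top BX-\tr B \stackrel{d}{=} \sum_{i=1}^d\lambda_i(Z_i^2-1),
\]
with $Z_i$ i.i.d.\ $N(0,1)$. Moreover $\sum_i\lambda_i^2=\norm{B}_\Frob^2$. The problem therefore becomes a moment bound for $S=\sum_i\lambda_i\eta_i$, where $\eta_i=Z_i^2-1$ are i.i.d., mean zero, and have all moments finite.

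For $q\ge2$ we apply the Marcinkiewicz--Zygmund (or Rosenthal) inequality for independent centered summands. This gives
\[
\E|S|^q\le C_q\,\E\Bigl(\sum_i\lambda_i^2\eta_i^2\Bigr)^{q/2}.
\]
Since $q/2\ge1$, Minkowski's inequality in $L^{q/2}$ yields
\[
\Bigl\|\sum_i\lambda_i^2\eta_i^2\Bigr\|_{q/2}\le\sum_i\lambda_i^2\|\eta_1^2\|_{q/2}=\norm{B}_\Frob^2\,(\E|\eta_1|^q)^{2/q}.
\]
Hence $\E|S|^q\le C_q\,\E|\eta_1|^q\,\norm{B}_\Frob^q$. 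Here $\E|\eta_1|^q$ is a finite constant depending only on $q$, since a chi-square variable has all moments.

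For $1\le q<2$, Jensen's (Lyapunov's) inequality gives $\E|S|^q\le(\E S^2)^{q/2}$. Moreover $\E S^2=2\sum_i\lambda_i^2=2\norm{B}_\Frob^2$, which gives the bound with $C_q=2^{q/2}$.

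An alternative route that avoids citing Rosenthal is hypercontractivity. The variable $S$ lies in the second Wiener chaos, so $\|S\|_q\le(q-1)\|S\|_2$ for $q\ge2$. The dimension-free form of the constant is the only point that needs care, and both routes deliver it. No step is a genuine obstacle; the main thing to verify is that the constant does not depend on $d$, which the Minkowski step makes explicit.
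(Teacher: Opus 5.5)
Your proof is correct, but it takes a different route from the paper after the common diagonalization step. The paper does not use an inequality for sums of independent variables. Instead it bounds the cumulant generating function directly. Using $-\tfrac12\log(1-v)-\tfrac v2\le v^2/2$ for $|v|\le1/2$, it shows that $\log\E e^{sQ}\le 2s^2\norm{B}_\Frob^2\le 1/8$ whenever $|s|\le 1/(4\norm{B}_\Frob)$. This gives $\E\exp\{|Q|/(4\norm{B}_\Frob)\}\le 2e^{1/8}$, and the elementary bound $x^q\le C_qe^x$ then finishes the proof.

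Comparison of the two routes:
\begin{itemize}
\item \textbf{Paper's argument.} It is self-contained, since it needs only the explicit $\chi^2_1$ moment generating function. It handles every $q\ge1$ at once, so no split at $q=2$ is needed. It also proves something stronger: $Q/\norm{B}_\Frob$ is uniformly sub-exponential, which is a Hanson--Wright/Bernstein-type tail statement.
\item \textbf{Your argument.} It delegates the main work to Marcinkiewicz--Zygmund (or Rosenthal), so it is shorter but relies on a cited inequality. Your Minkowski step in $L^{q/2}$ does make the dimension-free constant transparent. The MZ-plus-$\E|\eta_1|^q$ chain gives a somewhat worse growth of the constant in $q$.
\item \textbf{Your hypercontractivity alternative.} It gives the sharp dependence $\norm{Q}_{L^q}\le (q-1)\sqrt2\,\norm{B}_\Frob$, which matches the paper's moment generating function bound up to constants.
\end{itemize}

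None of these differences matters here. The lemma is only applied with a fixed exponent $q=2m$, in the proof of \Cref{lem:lr-process}.
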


\begin{lemma}[A Paley--Zygmund-type bound]\label{lem:fraction}
Let $X$ be a random variable with $|X|\le M$ almost surely and
$\E X^2\ge\sigma^2>0$. Then, for every $\lambda\in[0,1)$,
\[
 \PP(|X|\ge\lambda\sigma)\ge(1-\lambda^2)\frac{\sigma^2}{M^2}.
\]
In particular, if $X=f(\iota)$ for an index $\iota$ uniformly distributed
on a finite set $\mathcal A$, then at least
$(1-\lambda^2)\sigma^2|\mathcal A|/M^2$ elements $\iota\in\mathcal A$ satisfy
$|f(\iota)|\ge\lambda\sigma$.
\end{lemma}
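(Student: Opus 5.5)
We argue with a Markov/Chebyshev-type lower bound on the tail, the same mechanism as the Paley--Zygmund inequality. Write $p=\PP(|X|\ge\lambda\sigma)$ and split the second moment along the event $A=\{|X|\ge\lambda\sigma\}$:
\[
 \E X^2=\E[X^2\mathbf 1_{A^c}]+\E[X^2\mathbf 1_{A}].
\]
On $A^c$ we have $X^2<\lambda^2\sigma^2$, so the first term is at most $\lambda^2\sigma^2(1-p)\le\lambda^2\sigma^2$. On $A$ we use the almost sure bound $|X|\le M$, so the second term is at most $M^2p$. Combining with $\E X^2\ge\sigma^2$ gives
\[
 \sigma^2\le\lambda^2\sigma^2+M^2p,
\]
and rearranging yields $p\ge(1-\lambda^2)\sigma^2/M^2$. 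This is exactly the claimed bound. The hypothesis $\lambda<1$ is used only to make the bound nontrivial; the inequality itself holds for all $\lambda\ge0$. Note also that $\sigma\le M$ automatically, since $\E X^2\le M^2$, so the bound never exceeds $1-\lambda^2$.

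For the counting version, we take the probability space to be $\mathcal A$ with the uniform measure and set $X=f(\iota)$. The hypotheses become $|f|\le M$ on $\mathcal A$ and $|\mathcal A|^{-1}\sum_{\iota}f(\iota)^2\ge\sigma^2$. The first part then gives
\[
 \#\{\iota:|f(\iota)|\ge\lambda\sigma\}/|\mathcal A|\ge(1-\lambda^2)\sigma^2/M^2,
\]
and multiplying by $|\mathcal A|$ gives the stated count.

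There is no real obstacle here. The only point needing care is to bound the contribution on $A^c$ by $\lambda^2\sigma^2$ directly, rather than going through Cauchy--Schwarz as in the classical Paley--Zygmund inequality. Doing so yields the sharper linear dependence on $\sigma^2/M^2$, rather than a bound involving $\E X^4$.
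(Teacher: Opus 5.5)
Your proof is correct and is essentially the paper's argument: split $\E X^2$ on $\{|X|<\lambda\sigma\}$, bound the two pieces by $\lambda^2\sigma^2$ and $M^2\,\PP(|X|\ge\lambda\sigma)$, and rearrange. You then obtain the counting version by applying this to the uniform measure on $\mathcal A$, exactly as the paper does.
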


\subsection{A maximal inequality for continuous processes}\label{app:maximal}

The exponential tail bounds rest on the following chaining inequality,
a variant of the shell argument of 
\citet[Theorem~I.5.1]{ibragimov1981}.
\Cref{lem:kolmogorov,lem:convex-sup} are the building blocks,
\Cref{prop:cover} is the general bound, and \Cref{cor:envelope} is the
form used in \Cref{lem:tail}. Throughout, $m\ge1$, $\norm{\cdot}_{L^m}$
refers to a fixed probability measure, $|\cdot|$ is the Euclidean norm,
and a supremum over the empty set is zero.

\begin{lemma}[Kolmogorov continuity criterion]\label{lem:kolmogorov}
Let $k\in\N$, $m\ge1$, and $\alpha>k$. If a real process
$(\xi(x))_{x\in[0,1]^k}$ satisfies
$\norm{\xi(x)-\xi(y)}_{L^m}\le L|x-y|^{\alpha/m}$ for all $x,y$, then it
has a continuous modification, again denoted $\xi$, such that
\[
 \Bigl\|\sup_{x,y\in[0,1]^k,\,|x-y|\le h}|\xi(x)-\xi(y)|\Bigr\|_{L^m}
 \le C_{k,m,\alpha}Lh^{(\alpha-k)/m},\qquad h\in(0,1].
\]
\end{lemma}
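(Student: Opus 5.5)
This is the classical Kolmogorov--Chentsov argument via dyadic chaining, made quantitative in $L^m$. First reduce to dyadic points. Let $D_j=2^{-j}\mathbb Z^k\cap[0,1]^k$ and $D=\bigcup_j D_j$. For each $j$, set
\[
\Delta_j=\max\{|\xi(x)-\xi(y)|:x,y\in D_j,\ |x-y|_\infty\le 2^{-j}\}.
\]
Each $D_j$ has at most $(2^j+1)^k\le C2^{jk}$ points, and each point has at most $3^k$ such neighbours. Bounding the $m$-th power of the maximum by the sum of $m$-th powers gives
\[
\E\Delta_j^m\le C_k2^{jk}\,L^m(\sqrt k\,2^{-j})^{\alpha}=C L^m 2^{-j(\alpha-k)},
\qquad\text{so}\qquad \norm{\Delta_j}_{L^m}\le CL2^{-j(\alpha-k)/m}.
\]
Since $\alpha>k$, the exponent $\gamma=(\alpha-k)/m$ is positive, and the sum $\sum_{j\ge J}\norm{\Delta_j}_{L^m}$ is a geometric tail of order $L2^{-J\gamma}$. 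This uses Minkowski's inequality in $L^m$, which is valid because $m\ge1$.

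Next comes the chaining step. Fix $J$ with $2^{-J-1}<h\le2^{-J}$; the case $J=0$ is harmless. Take $x,y\in D$ with $|x-y|\le h$. Approximate $x$ by the sequence $x_j\in D_j$ for $j\ge J$, obtained by coordinatewise rounding down, so $x_j=x$ for $j$ large and $|x_{j+1}-x_j|_\infty\le2^{-j-1}$. Each link $(x_j,x_{j+1})$ is a neighbour pair in $D_{j+1}$. Define $y_j$ the same way. Then $x_J$ and $y_J$ lie within $\ell^\infty$ distance $2\cdot2^{-J}$ in $D_J$, so they are joined by a bounded number $C_k$ of neighbour steps in $D_J$. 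This yields the deterministic inequality
\[
\sup_{x,y\in D,\ |x-y|\le h}|\xi(x)-\xi(y)|\le C_k\Delta_J+2\sum_{j>J}\Delta_j.
\]
Taking $L^m$ norms and applying Minkowski gives the bound $C_{k,m,\alpha}L2^{-J\gamma}\le C L h^{\gamma}$. Letting $h$ be arbitrary, the same estimate with $h=2^{-J}$ shows that the modulus of continuity of $\xi|_D$ tends to zero almost surely, by monotonicity in $h$ and Borel--Cantelli (or simply by monotone convergence of the bound). Hence $\xi|_D$ is almost surely uniformly continuous.

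Finally, construct the modification. On the almost sure event where $\xi|_D$ is uniformly continuous, define $\tilde\xi(x)=\lim_{D\ni z\to x}\xi(z)$, and set $\tilde\xi=0$ elsewhere. The hypothesis gives $\xi(z)\to\xi(x)$ in $L^m$ as $z\to x$, so $\tilde\xi(x)=\xi(x)$ almost surely for each $x$, and $\tilde\xi$ is a modification. Its modulus over $[0,1]^k$ equals the modulus over $D$ by continuity, possibly up to replacing $h$ by $h^+$, which is handled by applying the bound at $2h$ and absorbing the factor $2^\gamma$ into the constant. The only delicate point is the combinatorics of the chaining, namely that linking $x_J$ to $y_J$ takes a bounded number of grid steps; everything else is routine.
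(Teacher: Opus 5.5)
Your proof is correct and follows the paper's route: an $L^m$ bound at dyadic scales on the modulus of continuity over a countable dense set, a.s.\ uniform continuity there, identification of the continuous extension with $\xi$ via $L^m$-continuity, and a factor-of-two passage from dyadic scales to general $h$. The only difference is that the paper cites inequality (1.10) of Talagrand for the dyadic estimate, which you prove by explicit chaining; one small point is that for $h\in(1/2,1]$ the ``bound at $2h$'' is outside $(0,1]$, but your $J=0$ chain bounds the modulus at every scale by $CL$, which covers this case (the paper handles $h=1$ via midpoints instead).
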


\begin{lemma}[Supremum over a compact convex set]\label{lem:convex-sup}
Let $k\in\N$ and $m>k$. Let $\Gamma\subseteq\R^k$ be a nonempty compact
convex set of diameter at most $\delta>0$, and let $(\xi(u))_{u\in\Gamma}$
be a real process with continuous paths such that
$\sup_{u\in\Gamma}\norm{\xi(u)}_{L^m}\le M$ and
$\norm{\xi(u)-\xi(v)}_{L^m}\le L|u-v|$ for $u,v\in\Gamma$. Then
\[
 \E\sup_{u\in\Gamma}|\xi(u)|^m
 \le C_{k,m}\bigl\{M^m+(L\delta)^kM^{m-k}\bigr\}.
\]
\end{lemma}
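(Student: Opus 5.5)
The idea is to reduce to \Cref{lem:kolmogorov} by transplanting the process from $\Gamma$ onto a unit cube. Convexity enters through the metric projection. Let $\pi:\R^k\to\Gamma$ be the Euclidean nearest-point projection onto $\Gamma$. Because $\Gamma$ is nonempty, closed and convex, $\pi$ is well defined and $1$-Lipschitz. Since $\Gamma$ has diameter at most $\delta$, it lies in some closed cube $a+\delta[0,1]^k$; any axis-parallel cube of side $\delta$ containing one point of $\Gamma$ shifted suitably works, because each coordinate of $\Gamma$ ranges over an interval of length at most $\delta$. Define
\[
\eta(x)=\xi\bigl(\pi(a+\delta x)\bigr),\qquad x\in[0,1]^k .
\]
Then $\eta$ has continuous paths, and $\sup_{x\in[0,1]^k}|\eta(x)|=\sup_{u\in\Gamma}|\xi(u)|$ because $\pi$ maps the cube onto $\Gamma$. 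Moreover $\norm{\eta(x)}_{L^m}\le M$, and
\[
\norm{\eta(x)-\eta(y)}_{L^m}\le L\,|\pi(a+\delta x)-\pi(a+\delta y)|\le L\delta|x-y|.
\]

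Next, apply \Cref{lem:kolmogorov} to $\eta$ with Lipschitz constant $L\delta$ and $\alpha=m$, so that $\alpha/m=1$ and $\alpha>k$ by the hypothesis $m>k$. The paths are already continuous, so the continuous modification agrees with $\eta$ up to indistinguishability. This gives, for $h\in(0,1]$,
\[
\E\sup_{|x-y|\le h}|\eta(x)-\eta(y)|^m\le C_{k,m}(L\delta)^m h^{m-k}.
\]
Now take a grid $G_h\subset[0,1]^k$ of spacing $h/\sqrt k$. Every point of the cube is within distance $h$ of $G_h$, and $|G_h|\le C_k h^{-k}$. Then $\sup|\eta|\le\max_{G_h}|\eta|+\sup_{|x-y|\le h}|\eta(x)-\eta(y)|$. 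Bounding the $m$-th moment of the maximum by the sum over grid points, we get
\[
\E\sup_{\Gamma}|\xi|^m\le C_{k,m}\bigl\{h^{-k}M^m+(L\delta)^m h^{m-k}\bigr\}.
\]

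Finally, optimize over $h$ in two cases. If $M\ge L\delta$, take $h=1$; the bound is at most $C(M^m+(L\delta)^m)\le 2CM^m$. If $M<L\delta$, take $h=M/(L\delta)\in(0,1)$. Both terms then equal $(L\delta)^kM^{m-k}$, which yields the claim $C_{k,m}\{M^m+(L\delta)^kM^{m-k}\}$.

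The only genuinely delicate point is the extension step. The hypotheses control $\xi$ only on $\Gamma$, and the $1$-Lipschitz property of the convex projection is exactly what preserves the Lipschitz increment bound with constant $L\delta$ on the rescaled unit cube. Everything after that is a standard grid-plus-modulus chaining estimate followed by a scalar optimization.
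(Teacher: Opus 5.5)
Your proposal is correct and follows essentially the same route as the paper. You transplant $\xi$ to the unit cube via the $1$-Lipschitz metric projection onto $\Gamma$, apply the Kolmogorov lemma with $\alpha=m$, bound the supremum by a grid maximum plus the modulus at scale $h$, and choose $h=1$ or $h=M/(L\delta)$; your cube of side $\delta$ and grid of spacing $h/\sqrt k$ are slightly cleaner variants of the paper's cube of side $2\delta$ and subdivided segments.

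The only omission is the degenerate case $M=0<L\delta$, where $h=M/(L\delta)=0$ is not admissible. Either let $h\downarrow0$ in your intermediate bound, which tends to zero because $m>k$, or argue as the paper does: $\xi$ vanishes almost surely on a countable dense set, hence everywhere by path continuity.
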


\begin{proposition}[Maximal inequality over a cover]\label{prop:cover}
Let $k\in\N$, $m>k$, $U\subseteq\R^k$, and let $(\xi(u))_{u\in U}$ be a
real process with continuous paths. Let $(\Gamma_j)_{j\in\mathcal J}$ be a
countable family of nonempty compact convex subsets of $U$ of diameters
at most $\delta_j$. Suppose that
$\sup_{u\in\Gamma_j}\norm{\xi(u)}_{L^m}\le M_j$ and
$\norm{\xi(u)-\xi(v)}_{L^m}\le L_j|u-v|$ for $u,v\in\Gamma_j$. Then
\[
 \E\sup_{u\in\bigcup_j\Gamma_j}|\xi(u)|^m
 \le C_{k,m}\sum_{j\in\mathcal J}
 \bigl\{M_j^m+(L_j\delta_j)^kM_j^{m-k}\bigr\}.
\]
\end{proposition}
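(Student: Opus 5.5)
The plan is to reduce the supremum over the union to a sum of suprema over the individual pieces, and then apply \Cref{lem:convex-sup} to each piece. Since a supremum over the empty set is zero and the $\Gamma_j$ cover $\bigcup_j\Gamma_j$, we have pointwise
\[
 \sup_{u\in\bigcup_j\Gamma_j}|\xi(u)|^m
 =\sup_{j\in\mathcal J}\sup_{u\in\Gamma_j}|\xi(u)|^m
 \le\sum_{j\in\mathcal J}\sup_{u\in\Gamma_j}|\xi(u)|^m .
\]
Here each $\sup_{u\in\Gamma_j}|\xi(u)|$ is a measurable random variable: paths are continuous and $\Gamma_j$ is compact, so the supremum equals a supremum over a countable dense subset of $\Gamma_j$. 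The left side is then a countable supremum of measurable variables, so it is also measurable. Taking expectations and using monotone convergence (Tonelli) for the countable sum of nonnegative terms gives
\[
 \E\sup_{u\in\bigcup_j\Gamma_j}|\xi(u)|^m\le\sum_{j\in\mathcal J}\E\sup_{u\in\Gamma_j}|\xi(u)|^m .
\]

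Next, for each fixed $j$ I will apply \Cref{lem:convex-sup} to the restriction of $\xi$ to $\Gamma_j$. Its hypotheses all hold on that piece:
\begin{itemize}
 \item $\Gamma_j$ is a nonempty compact convex subset of $\R^k$ with diameter at most $\delta_j$;
 \item the restricted process has continuous paths;
 \item $\sup_{u\in\Gamma_j}\norm{\xi(u)}_{L^m}\le M_j$;
 \item $\norm{\xi(u)-\xi(v)}_{L^m}\le L_j|u-v|$ on $\Gamma_j$;
 \item $m>k$.
\end{itemize}
The lemma then yields
\[
 \E\sup_{u\in\Gamma_j}|\xi(u)|^m\le C_{k,m}\bigl\{M_j^m+(L_j\delta_j)^kM_j^{m-k}\bigr\}.
\]
The constant $C_{k,m}$ depends only on $k$ and $m$, so it is the same for every $j$ and can be pulled out of the sum. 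Summing over $j$ gives the claim. If the sum is infinite there is nothing to prove.

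There is essentially no obstacle here, since all the chaining work is contained in \Cref{lem:convex-sup}. The only care needed is the measurability point above: the index set $\mathcal J$ must be countable so that the union bound and the interchange of sum and expectation are legitimate. The constant must also be uniform in $j$, which it is because $C_{k,m}$ in \Cref{lem:convex-sup} does not depend on $\delta$, $M$ or $L$. The bound is designed for the later envelope argument (\Cref{cor:envelope}): there one covers dyadic shells in the rescaled parameter $h=D_n(\theta-\theta_0)$ by cubes. In that application the quantities $M_j$ decay exponentially in the shell radius, which is where summability of the right-hand side comes from.
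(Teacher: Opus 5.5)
Your proposal is correct and follows the paper's proof essentially verbatim: you bound the supremum over the union by the countable sum of the piecewise suprema (each measurable as a supremum over a countable dense subset), interchange sum and expectation by monotone convergence, and apply \Cref{lem:convex-sup} on each $\Gamma_j$ with its $j$-independent constant $C_{k,m}$. Your closing aside is slightly off, since \Cref{cor:envelope} covers by unit lattice cubes rather than dyadic shells, but this plays no role in the proof.
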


Only Lipschitz control inside each piece is required, so neither product
structure nor convexity of $U$ itself is needed. Taking the pieces to be
annular shells gives bounds of Ibragimov--Has'minskii type. For the
Bernstein-type envelopes arising here, unit cubes are more convenient.

\begin{corollary}[Bernstein-type envelopes]\label{cor:envelope}
Let $k=k_1+k_2$ with $k_1,k_2\in\N$, let $m>k$, and let
$U\subseteq\R^{k_1}\times\R^{k_2}$ be a product of compact intervals.
For $\gamma\in[1,\infty]$ and $r\ge0$ put $\psi_\gamma(r)=\min(r^2,\gamma r)$,
so that $\psi_\infty(r)=r^2$. Let $(\xi(u))_{u\in U}$ have continuous
paths, and suppose that, for constants $A\ge0$, $a>0$, $B\ge0$, and
$\gamma_1,\gamma_2\in[1,\infty]$,
\begin{align}
 \norm{\xi(u)}_{L^m}^m
 &\le A\exp\bigl[-a\{\psi_{\gamma_1}(|u_1|)+\psi_{\gamma_2}(|u_2|)\}\bigr],
 &&u\in U,\label{eq:envelope-moment}\\
 \norm{\xi(u)-\xi(v)}_{L^m}&\le B|u-v|,
 &&u,v\in U.\label{eq:envelope-lipschitz}
\end{align}
Put $a''=a(m-k)/(32m)$. Then, for all $H_1,H_2\ge0$,
\begin{equation}\label{eq:envelope-bound}
 \E\sup_{\substack{u\in U\\|u_1|\ge H_1,\,|u_2|\ge H_2}}|\xi(u)|^m
 \le C\bigl(A+B^kA^{1-k/m}\bigr)
 \exp\bigl[-a''\{\psi_{\gamma_1}(H_1)+\psi_{\gamma_2}(H_2)\}\bigr],
\end{equation}
where $C$ depends only on $k_1$, $k_2$, $m$, and $a$, and not on
$\gamma_1$, $\gamma_2$, or $U$.
\end{corollary}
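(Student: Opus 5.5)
The plan is to reduce \eqref{eq:envelope-bound} to \Cref{prop:cover} applied to a cover of the sublevel region by unit cubes. Let $U_H=\{u\in U:|u_1|\ge H_1,\,|u_2|\ge H_2\}$. Tile $\R^{k_1}\times\R^{k_2}$ by closed cubes of side $1/\sqrt k$, so each has diameter at most $1$. Intersect each cube with $U$; since $U$ is a product of compact intervals, each intersection is a compact convex set of diameter $\delta_j\le1$. Keep only the pieces $\Gamma_j$ that meet $U_H$; these cover $U_H$.

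\Cref{prop:cover} controls the supremum over the larger union $\bigcup_j\Gamma_j$, which contains $U_H$, so the bound applies. For each kept piece, \eqref{eq:envelope-lipschitz} gives $L_j=B$. From \eqref{eq:envelope-moment} we get $M_j^m\le A\exp[-a\,\inf_{u\in\Gamma_j}\{\psi_{\gamma_1}(|u_1|)+\psi_{\gamma_2}(|u_2|)\}]$. \Cref{prop:cover} then yields
\[
\E\sup_{U_H}|\xi|^m\le C\sum_j\bigl\{M_j^m+B^kM_j^{m-k}\bigr\}\le C\bigl(A+B^kA^{1-k/m}\bigr)\sum_j e^{-a(1-k/m)\rho_j},
\]
where $\rho_j=\inf_{\Gamma_j}\{\psi_{\gamma_1}(|u_1|)+\psi_{\gamma_2}(|u_2|)\}$ and $M_j^{m-k}\le A^{1-k/m}e^{-a(1-k/m)\rho_j}$.

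The remaining task is to show that $\sum_j e^{-a(m-k)\rho_j/m}\le Ce^{-a''\{\psi_{\gamma_1}(H_1)+\psi_{\gamma_2}(H_2)\}}$, with $C$ independent of $\gamma$ and $U$. I would use two elementary properties of $\psi_\gamma$.

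\emph{Monotonicity and subadditivity.} $\psi_\gamma$ is nondecreasing, and $\psi_\gamma(r)\ge\tfrac14\psi_\gamma(r+1)-\tfrac12$ for all $r\ge0$. To see the second, use $\min(x^2,\gamma x)$ with $\gamma\ge1$ and check the cases $r\ge1$ and $r<1$ separately.

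\emph{Quasi-doubling.} $\psi_\gamma(r)\ge\tfrac12\psi_\gamma(H)+\tfrac12\psi_\gamma(r)$ whenever $r\ge H$.

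On a kept piece, $\sup_{\Gamma_j}|u_i|\le\inf_{\Gamma_j}|u_i|+1$, and the piece meets $U_H$. These give
\[
\rho_j\ge\tfrac12\{\psi_{\gamma_1}(H_1-1)_++\psi_{\gamma_2}(H_2-1)_+\}+\tfrac12\{\psi_{\gamma_1}(r_{1j})+\psi_{\gamma_2}(r_{2j})\},
\]
where $r_{ij}=\inf_{\Gamma_j}|u_i|$. With the subadditivity estimate, the loss from shifting $H_i$ by $1$ costs only a factor $1/4$ plus an additive constant. This is where the crude constant $32$ in $a''$ absorbs everything.

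The residual sum $\sum_j\exp[-c\{\psi_{\gamma_1}(r_{1j})+\psi_{\gamma_2}(r_{2j})\}]$ factorizes over the two blocks. It is bounded by a constant because $\psi_\gamma(r)\ge\min(r^2,r)$ uniformly in $\gamma\ge1$. The number of cubes with $r_{ij}\in[N,N+1)$ grows polynomially in $N$, and $\sum_N N^{k_i-1}e^{-c\min(N^2,N)}<\infty$.

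The main obstacle is this bookkeeping: extracting the factor $\exp[-a''\{\psi_{\gamma_1}(H_1)+\psi_{\gamma_2}(H_2)\}]$ uniformly in $\gamma_i\in[1,\infty]$. The issue is the additive constants produced when passing from $H_i$ to $H_i-1$, including the case $H_i<1$, where the bound is trivial after enlarging $C$. I would first verify the two $\psi_\gamma$ inequalities carefully, tracking constants to land exactly at $a(m-k)/(32m)$. If the factor $1/4$ plus quasi-doubling does not reach $32$, I would relax to $a''$ of the same form with a larger numerical denominator, since only the structure matters downstream in \Cref{lem:tail}.
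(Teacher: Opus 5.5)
Your proposal is correct and takes essentially the same route as the paper. You cover the region by unit-scale cubes intersected with $U$, apply \Cref{prop:cover} with $L_j=B$, bound $M_j^m$ by the envelope at the infimum over each piece, split the exponent in half to extract $\psi_{\gamma_i}(H_i)$, and sum the residual over shells using $\psi_\gamma\ge\psi_1$. Your diameter-one cubes and the inequality $\psi_\gamma(r)\ge\frac14\psi_\gamma(r+1)-\frac12$ (which holds for all $r\ge0$, so $H_i<1$ needs no separate treatment) actually give the exponent $a(m-k)/(8m)$; this implies the stated bound with $a''=a(m-k)/(32m)$, so the hedge about relaxing the constant is unnecessary.
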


\subsection{Auxiliary lemmas}\label{app:aux-lemmas}

The auxiliary lemmas follow the stages of the proof outline in
\Cref{sec:outline}.
\begin{itemize}
\item \Cref{lem:projection} splits the likelihood into a low-dimensional
 signal block and a noise block, and supplies the covariance derivative
 bounds used throughout.
\item \Cref{lem:recovery,lem:affinity} give global identifiability.
 \Cref{lem:recovery} recovers normalized Taylor coefficients of the field
 from the data, and \Cref{lem:affinity} turns their triangular covariance
 structure into an affinity bound between the observation laws.
\item \Cref{lem:lr-process}, \Cref{prop:cover}, and \Cref{cor:envelope}
 convert that affinity bound into the exponential tail bound of
 \Cref{lem:tail}, which is \Cref{thm:rbf}(iv).
\item \Cref{lem:information,lem:quadratic,lem:score} give the information
 bounds, the local quadratic expansion, and the score central limit
 theorem, which together prove parts (i)--(iii) of \Cref{thm:rbf}. Part
 (v) follows from parts (iii) and (iv).
\item \Cref{lem:random} is used only for \Cref{cor:random}.
\end{itemize}

The RBF kernel is numerically of low rank on a bounded
set. Its Taylor coefficients decay factorially, so only polynomial
features of degree $O(b_n)$ are visible at the noise level. The next
lemma makes this precise with a single projection that works for all
parameters simultaneously.

\begin{lemma}[Common projection and covariance derivatives]\label{lem:projection}
Assume \Cref{ass:parameter} and bounded observation locations. For any
fixed $A>0$, there is a parameter-independent orthogonal projection $P_n$
of rank $r_n=O(b_n^p)$ such that, with $S_n(\theta)=sR_n(l)$,
\begin{equation}\label{eq:projection-approx}
 \sup_{\theta\in\Theta}
 \norm{\partial_\theta^\gamma
 (S_n-P_nS_nP_n)}_{\op}\le n^{-A},
 \qquad |\gamma|\le3.
\end{equation}
For $\widetilde C_n=t\Id_n+P_nS_nP_n$ and its log likelihood,
\begin{equation}\label{eq:likelihood-approx}
 \sup_{\theta\in\Theta}
 |\partial_\theta^\gamma(l_n-\widetilde l_n)|
 =O_{\PP}(n^{4-A}),\qquad |\gamma|\le3.
\end{equation}

For a spatial derivative with $a$ derivatives in $s$ and $c$ in $l$,
$1\le a+c\le3$, a nonzero derivative has $a\le1$ and satisfies
\begin{align}
 \sup_\theta\norm{C_n^{-1/2}
   \partial_s^a\partial_l^c C_n C_n^{-1/2}}_{\op}
 &\le C b_n^c,\label{eq:raw-operator}\\
 \sup_\theta\norm{C_n^{-1/2}
   \partial_s^a\partial_l^c C_n C_n^{-1/2}}_{\Frob}
 &\le C b_n^{p/2+c}.\label{eq:raw-frobenius}
\end{align}
The same bounds hold after compression. Precisely, let $U$ be any
$n\times r$ matrix with orthonormal columns, $1\le r\le n$, that does not
depend on $\theta$, and let $A_\theta=U^\top C_n(\theta)U$ be the compressed
covariance. Then \eqref{eq:raw-operator} and \eqref{eq:raw-frobenius}
hold with $C_n$ replaced by $A_\theta$, that is, for the compressed
derivatives $\partial_s^a\partial_l^cA_\theta
=U^\top(\partial_s^a\partial_l^cC_n)U$ whitened by $A_\theta^{-1/2}$, with
constants that do not depend on $U$. In the coordinates
$\theta=\theta_0+D_n^{-1}h$, normalized spatial derivatives of total
order $k\le3$, whitened either by $C_n$ or by a compression $A_\theta$,
have respective bounds $C\epsilon_n^k$ in operator norm and
$C\epsilon_n^{k-1}$ in Frobenius norm. The first normalized nugget derivative has operator
norm $O(n^{-1/2})$ and Frobenius norm $O(1)$, and all covariance
derivatives of order at least two involving $t$ vanish.
\end{lemma}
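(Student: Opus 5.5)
The proof rests on the power-series representation of the kernel from \Cref{sec:outline},
\[
 S_n(\theta)=s\sum_{\alpha}\Phi_{\alpha,l}\Phi_{\alpha,l}^\top,
 \qquad
 \Phi_{\alpha,l}=\bigl(\phi_{\alpha,l}(x_{n,i})\bigr)_{i\le n}.
\]
We write $e^{-\|x\|^2/(2l^2)}=\sum_k c_k(l)\|x\|^{2k}$ and expand it as well. Then every column $\Phi_{\alpha,l}$, and each of its $l$-derivatives up to order three, is a convergent combination of the monomial vectors $V_\beta=(x_{n,i}^\beta)_i$. The coefficients are bounded by $C^{|\beta|}/\sqrt{\beta!}$, uniformly over $l\in[l_-,l_+]$. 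After translating so that the bounded domain lies in a ball of radius $\rho$, we have $\|V_\beta\|\le\sqrt n\,\rho^{|\beta|}$.

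Take $P_n$ to be the orthogonal projection onto $\mathrm{span}\{V_\beta:|\beta|\le m\}$, where $m=\lceil M b_n\rceil$ for a large constant $M$. This span is parameter independent and has rank at most $\binom{m+p}{p}=O(b_n^p)$. The tail of the quadratic expansion beyond degree $m$ contributes at most $n\sum_{|\beta|>m}C^{|\beta|}\rho^{2|\beta|}/\sqrt{\beta!}$ in operator norm, and the same holds after three parameter derivatives, since differentiating in $l$ only produces polynomial factors in $|\beta|$. By Stirling, $(m!)^{-1/2}C^m\le n^{-A-1}$ once $m\log m\ge 2(A+1)\log n+Cm$, which holds for $m=Mb_n$ with $M$ large. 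Writing $S_n=S^{\le}+S^{>}$, the low-degree part satisfies $P_nS^{\le}P_n=S^{\le}$ exactly. The remainder $S^{>}$, together with its cross terms, is $O(n^{-A})$ after we enlarge $A$. This gives \eqref{eq:projection-approx}.

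For \eqref{eq:likelihood-approx}, $l_n$ and $\widetilde l_n$ are smooth functions of the covariance. Since $C_n\succeq t_-\Id$ and $\widetilde C_n\succeq t_-\Id$, all inverses are bounded. The derivatives of $\log\det$ and of $Y^\top C^{-1}Y$ up to order three are multilinear in $C^{-1}$ and $\partial^\gamma C$. Hence a perturbation of size $n^{-A}$ in operator norm changes the log-determinant terms by at most $n\cdot n^{-A}$ times powers of $\|\partial C\|_{\op}\le Cn$. It changes the quadratic terms by at most $\|Y\|^2 n^{-A}n^{3}$, and $\|Y\|^2=O_\PP(n)$. Together these give $O_\PP(n^{4-A})$.

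For the derivative bounds, note first that $\partial_s^aC_n=0$ for $a\ge2$ and that $\partial_sS_n=S_n/s$. Because $0\preceq S_n\preceq C_n$, whitening gives operator norm at most $1/s$. The Frobenius norm is at most $\|C_n^{-1/2}S_nC_n^{-1/2}\|_\Frob$. This matrix has eigenvalues $\lambda_i/(\lambda_i+t)\le1$, and by \eqref{eq:projection-approx} only $O(b_n^p)$ of them exceed $n^{-A}$, so its Frobenius norm is $O(b_n^{p/2})$.

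For the $l$-derivatives we use the key structural identity
\[
 \partial_l\bigl(l^{-2|\alpha|}\bigr)=-\tfrac{2|\alpha|}{l}\,l^{-2|\alpha|},
\]
together with $\partial_l e^{-\|x-y\|^2/2l^2}=l^{-3}\|x-y\|^2(\cdots)$. In the $\xi$-representation, $\partial_l^c S_n$ is therefore a sum $\sum_\alpha$ of rank-one pieces weighted by polynomials of degree $c$ in $|\alpha|$. We split the sum into the part with $|\alpha|\le m$, where the weights are $O(b_n^c)$, and the tail, which is $O(n^{-A})$. To make "weights $O(b_n^c)$ imply whitened norm $O(b_n^c)$" rigorous, we write $S_n=\Psi\Psi^\top$ with $\Psi=\sqrt s\,[\Phi_\alpha]$ and express the derivative as $\Psi' \Psi^\top+\Psi\Psi'^\top$ (and similar forms for higher orders). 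Here $\Psi'=\Psi\,\Delta+(\text{lower-triangular mixing})$, and $\Delta$ is diagonal with entries $O(|\alpha|)$. The identity $\|C_n^{-1/2}\Psi\|_{\op}\le1$ then yields operator norm $Cb_n^c$. The Frobenius bound gains the factor $\sqrt{\mathrm{rank}}=O(b_n^{p/2})$.

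\textbf{Main obstacle.} The hard step is the mixing term coming from the Gaussian prefactor. It is not diagonal, and its entries must be shown to be bounded by $C^{|\alpha|-|\gamma|}$-type weights. We would handle this by a triangular change of basis: $\Psi=\Psi_0 T_l$ with $T_l$ lower triangular and $\|T_l^{-1}\partial_l T_l\|_{\op}=O(b_n)$ on degrees $\le m$, which follows from the explicit coefficients $c_k(l)$.

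Compression by a fixed $U$ preserves the bounds. Indeed, $A_\theta=U^\top C_nU\succeq U^\top(\Psi\Psi^\top)U$, so the same factorization argument with $U^\top\Psi$ in place of $\Psi$ applies verbatim, with constants independent of $U$. Finally, the normalized versions follow by multiplying by $D_n^{-1}$ entries. Spatial derivatives of order $k=a+c$ pick up the factor $b_n^{-pk/2-c}$. In operator norm this gives $b_n^{c}\,b_n^{-pk/2-c}=\epsilon_n^k$, and in Frobenius norm $b_n^{p/2+c}\,b_n^{-pk/2-c}=\epsilon_n^{k-1}$. For the nugget, $\partial_tC_n=\Id$, and whitening gives operator norm at most $1/t$ and Frobenius norm at most $\sqrt n/t$; after the factor $n^{-1/2}$ these become $O(n^{-1/2})$ and $O(1)$. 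Higher derivatives involving $t$ vanish because $C_n$ is linear in $t$ with a $\theta$-free coefficient.
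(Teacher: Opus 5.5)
Your proposal is correct and follows essentially the paper's route. The projection is onto parameter-free polynomial evaluations of degree $O(b_n)$; you certify it through the monomial expansion of the features, while the paper uses the Taylor polynomial of $e^{-u}$ in $u=\norm{x-y}^2/(2l^2)$, and both give the same rank and factorial tail. The likelihood bound is factor-by-factor perturbation. The derivative bounds use the factorization $S_n=s\Psi\Psi^\top$, the contraction $\norm{\sqrt s\,C_n^{-1/2}\Psi}_{\op}\le1$, an $O(b_n)$ raising matrix per $l$-derivative (letting the feature degree rise to $m+2k$, as the paper does with $B_{M+2k}$), and a rank count for the Frobenius norms.

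The obstacle you flag dissolves exactly as you propose. In normalized monomial coordinates your $T_l$ is $e^{-Q_m/(2l^2)}D_{m,l}$ from \eqref{eq:triangular-transform}. Since $Q_m$ commutes with $e^{-Q_m/(2l^2)}$ and $D_{m,l}^{-1}Q_mD_{m,l}=l^2Q_m$, one gets $T_l^{-1}\partial_lT_l=l^{-1}(Q_m-N_m)$ with $N_m=\diag(|\alpha|)$, which is the paper's raising identity \eqref{eq:feature-raising}. This matrix has at most $p+1$ nonzero entries of size $O(m)$ per row and column, hence operator norm $O(m)$ with no exponential weights. By contrast, the crude bound $\norm{T_l^{-1}}_{\op}\norm{\partial_lT_l}_{\op}$ would only give $e^{Cm}$.
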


In coordinates adapted to this projection, the reduced likelihood
separates into a signal block of dimension $r_n$ and a noise block of
dimension $d_n=n-r_n$ that depends only on the nugget. Fix a projection
from \Cref{lem:projection} with $A=20$. Choose orthonormal
matrices $U_n,V_n$ spanning its range and orthogonal complement, and write
\begin{equation}\label{eq:block-notation}
 Z_n=U_n^\top Y_n,\quad W_n=V_n^\top Y_n,\quad d_n=n-r_n,\quad
 A_\theta=t\Id_{r_n}+sU_n^\top R_n(l)U_n.
\end{equation}
The reduced negative twice log likelihood decomposes as
\begin{align}
 -2\widetilde l_n(\theta)&=F_n(t)+G_n(s,l,t),\notag\\
 F_n(t)&=d_n\log t+\norm{W_n}^2/t,\notag\\
 G_n(s,l,t)&=\log|A_\theta|+Z_n^\top A_\theta^{-1}Z_n.\notag
\end{align}

The spatial parameters are identified through the Taylor coefficients of
the field at a fixed point. For the next lemma, apply a fixed rigid motion to the spatial coordinates
so that $Q=[0,L]^p$ for some $L>0$. This preserves the covariance model.
The field $F$ has a version that is an entire function on $\R^p$
(see the proof of \Cref{lem:recovery}). Its normalized Taylor
coefficients at the origin are
\begin{equation}\label{eq:coefficient-definition}
 \zeta_\alpha=\sqrt{\alpha!}\,[x^\alpha]F(x),\qquad\alpha\in\N_0^p.
\end{equation}
Let $\zeta^{(m)}=(\zeta_\alpha:|\alpha|\le m)$,
$q_m=\binom{m+p}{p}$, and
$\Sigma_{m,\theta}=\Cov_\theta(\zeta^{(m)})$.

These coefficients have a triangular structure. By
\eqref{eq:triangular-transform} in the proof of \Cref{lem:recovery},
$\zeta^{(m)}=\sqrt s\,\mathcal T_{m,\theta}\xi^{(m)}$ with independent
standard normal $\xi_\alpha$ and a matrix $\mathcal T_{m,\theta}$ that is
lower triangular in degree order with diagonal entries $l^{-|\alpha|}$.
Given the coefficients of lower degree, $\zeta_\alpha$ therefore has
variance $s\,l^{-2|\alpha|}$. The next lemma shows that a fixed linear
statistic recovers these coefficients through degree $m_n\asymp b_n$,
with polynomially small error uniformly in $\theta$. The regression is
fitted to a higher degree $K_n\approx2m_n$: extracting a coefficient of
degree $m$ from the fitted polynomial amplifies errors by a factor
$e^{m\log m}$, which the truncation error $e^{-K\log K}$ of the fit must
overcome.

\begin{lemma}[Recovery of normalized Taylor coefficients]\label{lem:recovery}
Under \Cref{ass:parameter,ass:coverage}, set
\begin{equation*}
 \beta=\kappa p,\qquad
 K_n=\left\lfloor\frac{\beta}{2}b_n\right\rfloor,\qquad
 m_n=\left\lfloor\frac{\beta}{4}b_n\right\rfloor.
\end{equation*}
There is a parameter-independent linear statistic
$\widehat\zeta^{(m_n)}=H_nY_n$ such that
\begin{equation}\label{eq:recovery-mse}
 \sup_{\theta\in\Theta}\max_{|\alpha|\le m_n}
 \E_\theta(\widehat\zeta_\alpha-\zeta_\alpha)^2
 \le n^{-\beta/4+o(1)}.
\end{equation}
The exact coefficient covariance satisfies
\begin{equation}\label{eq:coefficient-eigenvalues}
 e^{-Cm}\Id_{q_m}\preceq\Sigma_{m,\theta}
 \preceq e^{Cm}\Id_{q_m},\qquad m\ge1,
\end{equation}
uniformly in $\theta$. Writing
$\widehat\Sigma_{n,\theta}=\Cov_\theta(H_nY_n)$, we have
\begin{equation}\label{eq:relative-recovery}
 \sup_{\theta\in\Theta}
 \norm{\Sigma_{m_n,\theta}^{-1/2}
 (\widehat\Sigma_{n,\theta}-\Sigma_{m_n,\theta})
 \Sigma_{m_n,\theta}^{-1/2}}_{\op}
 \le n^{-\beta/8+o(1)}.
\end{equation}
Consequently,
\begin{equation}\label{eq:recovery-affinity}
 \sup_{\theta,\theta'\in\Theta}
 \left|
 \log\Aff(\widehat\Sigma_{n,\theta},\widehat\Sigma_{n,\theta'})
 -\log\Aff(\Sigma_{m_n,\theta},\Sigma_{m_n,\theta'})
 \right|=o(1).
\end{equation}
\end{lemma}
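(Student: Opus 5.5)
We prove \Cref{lem:recovery} in four steps: construct the entire version of $F$ and its triangular coefficient map, build the statistic $H_n$ by weighted least squares, bound the recovery error, and then transfer that bound to covariances and affinities.

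\textbf{Step 1: entire version and the triangular map.} Using the power series from \Cref{sec:outline}, we set
\[
 F(x)=\sqrt s\,e^{-\norm x^2/(2l^2)}\sum_\gamma\xi_\gamma x^\gamma/(l^{|\gamma|}\sqrt{\gamma!}).
\]
This series converges almost surely and locally uniformly on $\R^p$, since $\sum_\gamma|\xi_\gamma|R^{|\gamma|}/\sqrt{\gamma!}<\infty$ almost surely. Expanding $e^{-\norm x^2/(2l^2)}=\sum_\beta c_\beta x^{2\beta}$ gives
\[
 \zeta_\alpha=\sqrt s\sum_{2\beta+\gamma=\alpha}\sqrt{\alpha!/\gamma!}\,c_\beta\,l^{-|\gamma|}\xi_\gamma .
\]
This defines $\mathcal T_{m,\theta}$, which is lower triangular in degree order with diagonal $l^{-|\alpha|}$.

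For \eqref{eq:coefficient-eigenvalues}, we bound $\norm{\mathcal T}_{\op}$ by the maximum row/column $\ell^1$ sum. Here $|c_\beta|\le l^{-2|\beta|}/\beta!$, and $\sqrt{\alpha!/\gamma!}\le (C|\alpha|)^{|\beta|}$, so $\sum_\beta (Cm)^{|\beta|}/\beta!$ looks like $e^{Cm}$ but must be bounded carefully. A cleaner route is to write $\mathcal T=\Lambda^{-1}\widetilde T\Lambda$ with $\Lambda=\diag(\sqrt{\alpha!})$ and $\widetilde T$ the multiplication-by-$e^{-\norm x^2/(2l^2)}$ matrix in the monomial basis. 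The inverse is multiplication by $e^{+\norm x^2/(2l^2)}$, which has the same form. I expect the entries to be bounded by $e^{Cm}$ after accounting for the $\sqrt{\alpha!/\gamma!}$ factors, because $|\beta|\le m/2$ and $\sqrt{\alpha!/\gamma!}/\beta!\le C^{m}$ up to multinomial bounds. Combined with $q_m\le e^{Cm}$ and $l\in[l_-,l_+]$, this gives both the upper bound and the lower bound (via $\norm{\mathcal T^{-1}}_{\op}$).

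\textbf{Step 2: the statistic.} Choose the $\lfloor n^{\beta'}\rfloor$-point design subsets by the coverage of $Q$. Then perform weighted discrete least squares of $Y_n$ onto the orthonormal Legendre basis of total degree $\le K_n$ on $Q$, with weights given by a Voronoi-type partition of $Q$ induced by nearest design points. Since $h_n\le C_hn^{-\kappa}$ and polynomials of degree $K$ satisfy Markov inequalities $\norm{\nabla P}_\infty\le CK^2\norm P_\infty$, the discrete Gram matrix is within factor $2$ of the continuous one as soon as $K_n^2 h_n\to0$, which holds since $K_n\asymp b_n$. Then $H_n$ reads off $\sqrt{\alpha!}\,[x^\alpha]$ of the fitted polynomial. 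This map depends only on the locations.

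\textbf{Step 3: error bound.} Split $F=P_{K}F+(F-P_KF)$, where $P_KF$ is the Taylor polynomial at $0$ of degree $K$. On $Q$ the remainder is at most $\sum_{|\alpha|>K}|\zeta_\alpha|L^{|\alpha|}/\sqrt{\alpha!}$, whose second moment is $e^{-(1-o(1))K\log K/2\cdot 2}$ using \eqref{eq:coefficient-eigenvalues}-type variance bounds of order $e^{C|\alpha|}$ against $\alpha!$. The least squares projection is bounded in the discrete $\ell^2$/sup norm, losing only $e^{o(K\log K)}$ factors. The noise contributes variance $O(t\dim/n)$ to the Legendre coefficients, i.e.\ $n^{-1+o(1)}$ since $\binom{K+p}{p}=n^{o(1)}$.

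Converting Legendre coefficients to Taylor coefficients of degree $m$, including the $\sqrt{\alpha!}$ factor, amplifies by at most $e^{m\log m(1+o(1))}$, because $\sqrt{\alpha!}$ together with the monomial coefficients of orthonormal Legendre polynomials is $\lesssim (Cm)^{m}$-ish. With $m_n=\beta b_n/4$ we have $e^{m\log m}=n^{\beta/4+o(1)}$ and $e^{-K\log K}=n^{-\beta/2+o(1)}$. Noise and truncation errors are both $\le n^{-\beta/2}$ (note $\beta\le1$), so multiplying by the squared amplification $n^{\beta/2}$ gives \eqref{eq:recovery-mse} with room to spare. This exponent bookkeeping is the main obstacle: the Markov-inequality stability of the fit on irregular, clustered designs and the precise amplification constant must both be controlled so that they lose only $n^{o(1)}$.

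\textbf{Step 4: transfer to covariances and affinity.} Write $E=\widehat\zeta-\zeta$. Then
\[
 \widehat\Sigma-\Sigma=\Cov(E)+\Cov(\zeta,E)+\Cov(E,\zeta),
\]
with operator norms at most $q_m\max_\alpha\E E_\alpha^2$ plus a Cauchy--Schwarz cross term $\le q_m e^{Cm}n^{-\beta/8+o(1)}$. Whitening by $\Sigma^{-1/2}$ costs $e^{Cm_n}=n^{o(1)}$, which yields \eqref{eq:relative-recovery}.

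Finally, by \eqref{eq:gaussian-affinity},
\[
 \log\Aff=\tfrac14\log\det A+\tfrac14\log\det B-\tfrac12\log\det\tfrac{A+B}2 .
\]
Each relative perturbation $\Sigma^{-1/2}\widehat\Sigma\Sigma^{-1/2}=I+\Delta$ with $\norm\Delta_{\op}\le n^{-\beta/8+o(1)}$ changes each log-determinant by at most $q_{m_n}\norm\Delta_{\op}=o(1)$, since $q_{m_n}=b_n^{O(1)}$. For the average term, note that $(\widehat\Sigma_\theta+\widehat\Sigma_{\theta'})/2$ is a relative perturbation of $(\Sigma_\theta+\Sigma_{\theta'})/2$ of the same size, by the Loewner sandwich $(1-\norm\Delta)\Sigma\preceq\widehat\Sigma\preceq(1+\norm\Delta)\Sigma$. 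This gives \eqref{eq:recovery-affinity}.
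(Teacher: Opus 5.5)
Your construction matches the paper's: Voronoi weights on $Q$, weighted least squares onto the Legendre basis of total degree $K_n$, read-off of $\sqrt{\alpha!}\,[x^\alpha]$ from the fitted polynomial, the Taylor-tail bound $e^{-K\log K+CK}$, the triangular map for \eqref{eq:coefficient-eigenvalues}, and the Cauchy--Schwarz and Loewner-sandwich transfers of Step 4. The gap is in Step 3, the bookkeeping you yourself call the main obstacle, and as written it does not close. You bound the per-coefficient extraction amplification by $(Cm)^m=e^{m\log m(1+o(1))}=n^{\beta/4+o(1)}$, so its square is $n^{\beta/2+o(1)}$. By your own computation the truncation mean squared error is $e^{-K_n\log K_n+CK_n}=n^{-\beta/2+o(1)}$. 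The product is therefore $n^{o(1)}$, not $n^{-\beta/4+o(1)}$, so the claim of "room to spare" is false. With an amplification that large, the choice $K_n\approx 2m_n$ gives no polynomial decay at all. Step 4 inherits the failure, since \eqref{eq:relative-recovery} needs a polynomially small mean squared error.

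The missing estimate is $\norm{L_{\alpha,K}}^2\le\alpha!\,e^{CK}\le e^{m\log m+CK}$ for the row vector $L_{\alpha,K}$ that maps Legendre coefficients to $\sqrt{\alpha!}\,[x^\alpha]$, for $|\alpha|\le m\le K$.
\begin{itemize}
\item The only factorial growth is the normalization $\sqrt{\alpha!}$ itself. The monomial coefficients of the shifted Legendre polynomials grow only exponentially: Rodrigues' formula gives $P_j(2x-1)=\sum_a(-1)^{j-a}\binom ja\binom{j+a}a x^a$, with coefficients at most $8^j$.
\item Rescaling to $[0,L]$, orthonormalization, products over coordinates, and the $q_K=n^{o(1)}$ basis elements together cost only $e^{CK}=n^{o(1)}$.
\item With $\norm{M_{K_n}^{-1}}_{\op}=O(1)$ and the weighted-$\ell^2$ stability of the fit, this gives $\E_\theta(\widehat\zeta_\alpha-\zeta_\alpha)^2\le e^{m_n\log m_n+CK_n}\{n^{-\beta}+e^{-K_n\log K_n+CK_n}\}$.
\item The truncation term is then exactly $n^{\beta/4-\beta/2+o(1)}=n^{-\beta/4+o(1)}$. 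This tight balance, with no slack, is why $K_n\approx 2m_n$.
\end{itemize}

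Three smaller points.
\begin{itemize}
\item The noise variance of the Legendre coefficients is not $n^{-1+o(1)}$ for the clustered designs allowed by \Cref{ass:coverage}. You only have $\max_iw_{n,i}\le Ch_n(Q)^p\le Cn^{-\beta}$, entering through $W^2\preceq(\max_iw_{n,i})W$, which gives $n^{-\beta+o(1)}$. This still suffices.
\item The $\lfloor n^{\beta'}\rfloor$-point subsets play no role and can be dropped.
\item In Step 1 the factorization should be $\mathcal T=\Lambda\widetilde T\Lambda^{-1}D_{m,l}$. Your $\ell^1$ route already closes without hedging: $\sqrt{\alpha!/\gamma!}\le m^{|r|}$ for $\alpha=\gamma+2r$, and $\sum_r(m/(2l_-^2))^{|r|}/r!=e^{pm/(2l_-^2)}$.
\end{itemize}
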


The logarithm of the conditional variance $s\,l^{-2|\alpha|}$ is
$\log s-2|\alpha|\log l$, which is linear in the degree. Separating
$(s,l)$ from $(s_0,l_0)$ therefore amounts to a regression of
log-variances on degrees spread over $[0,m_n]$. With about $b_n^p$
coefficients, this explains the information scales $b_n^p$ for the
intercept $s$ and $b_n^{p+2}$ for the slope $l$. The next lemma turns this
heuristic into a global bound on the affinity between observation laws,
and adds the noise block to separate the nugget.

\begin{lemma}[Affinity separation]\label{lem:affinity}
Under \Cref{ass:parameter,ass:coverage}, there are $c>0$ and a
deterministic sequence $e_n\to0$ such that, for all $\theta=(s,l,t)\in\Theta$,
\begin{equation}\label{eq:affinity-separation}
 \Aff(P_{\theta,n},P_{\theta_0,n})
 \le \exp\left[
 e_n-cb_n^p\min\left\{\Lambda_n(\theta)^2,\Lambda_n(\theta)\right\}
 \right],
\end{equation}
where $\Lambda_n(\theta)\ge0$ is defined by
$\Lambda_n(\theta)^2=\log^2(s/s_0)+b_n^2\log^2(l/l_0)$. Moreover, write $h=D_n(\theta-\theta_0)=(h_\spat,h_t)$, where
$h_\spat\in\R^2$ collects the two spatial coordinates. After decreasing
$c$ and enlarging $e_n$, for all $\theta\in\Theta$,
\begin{equation}\label{eq:affinity-joint}
 \Aff(P_{\theta,n},P_{\theta_0,n})
 \le\exp\bigl[e_n-c\{\min(\norm{h_\spat}^2,b_n^{p/2}\norm{h_\spat})
 +h_t^2\}\bigr].
\end{equation}
The constant $c$ and the sequence $e_n$ do not depend on
$\theta_0\in\Theta$, and they depend on the design only through
$\calX$, $Q$, $C_h$, and $\kappa$. Both bounds hold at every
sufficiently large $n$ at which \eqref{eq:coverage} is satisfied.
\end{lemma}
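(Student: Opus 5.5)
The plan is to bound the affinity of the full observation laws by the affinities of two parameter-independent linear statistics, one for the spatial parameters and one for the nugget, and then combine them.

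\textbf{Step 1 (spatial block).} By \Cref{lem:affinity-monotone} applied to $H_nY_n$ and by \eqref{eq:recovery-affinity},
\[
\log\Aff(P_{\theta,n},P_{\theta_0,n})\le\log\Aff(\Sigma_{m_n,\theta},\Sigma_{m_n,\theta_0})+o(1),
\]
uniformly in $\theta,\theta_0$.

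To bound $\Aff(\Sigma_{m_n,\theta},\Sigma_{m_n,\theta_0})$, I will use the triangular representation $\zeta^{(m)}=\sqrt s\,\mathcal T_{m,\theta}\xi^{(m)}$. Order the coefficients by degree and apply the chain rule for Gaussian laws: conditionally on the lower-degree coefficients, $\zeta_\alpha$ is normal with variance $v_\alpha(\theta)=s\,l^{-2|\alpha|}$. Its conditional mean also depends on $\theta$. I will discard the mean difference, since for Gaussians with equal variances the mean term only decreases affinity. More carefully, I would bound $\Aff$ by the product of the one-dimensional scale affinities. This uses the identity $\det\Sigma=\prod v_\alpha$ from the Cholesky/triangular form, together with $\det((A+B)/2)\ge\prod_\alpha\tfrac12(v_\alpha+v'_\alpha)$, which I would try to prove via Fischer's inequality applied sequentially to Schur complements (conditional variances of a mixture of two Gaussians dominate the average of the conditional variances). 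This gives
\[
-\log\Aff\ge\sum_{|\alpha|\le m_n}g\bigl(\log s-\log s_0-2|\alpha|(\log l-\log l_0)\bigr),\qquad g(u)=\tfrac12\log\cosh(u/2)\asymp\min(u^2,|u|).
\]
Next, let $x=\log(s/s_0)$ and $y=\log(l/l_0)$. I will apply \Cref{lem:fraction} to $X=x-2|\alpha|y$ with $\alpha$ uniform over $\{|\alpha|\le m_n\}$. Since $|\alpha|/m_n$ has a nondegenerate distribution, $\E X^2\ge c(x^2+m_n^2y^2)\asymp\Lambda_n^2$ and $|X|\le C\Lambda_n$. Hence a fixed fraction $\gtrsim b_n^p$ of the indices have $|X|\ge c\Lambda_n$. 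This yields \eqref{eq:affinity-separation} with the $t$-free spatial term.

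\textbf{Step 2 (nugget).} Apply \Cref{lem:affinity-monotone} to $W_n=V_n^\top Y_n$. Its covariance is $t\Id_{d_n}+V_n^\top S_nV_n$, and by \eqref{eq:projection-approx} the second term has operator norm at most $n^{-20}$. A perturbation of the determinant formula \eqref{eq:gaussian-affinity} then gives
\[
-\log\Aff\ge d_n\,g_0(\log t-\log t_0)-o(1),
\]
with $g_0(u)\asymp u^2$ on the compact $\Theta$, and $d_n\sim n$.

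\textbf{Step 3 (combination).} From the two bounds, $-\log\Aff\ge\max(\text{spatial},\text{nugget})-o(1)\ge\tfrac12(\text{spatial}+\text{nugget})-o(1)$. Finally, convert to $h$-coordinates. On $\Theta$, $\log(s/s_0)\asymp s-s_0$ and $\log(l/l_0)\asymp l-l_0$, so $b_n^{p/2}\Lambda_n\asymp\norm{h_\spat}$ and $b_n^p\min(\Lambda_n^2,\Lambda_n)\asymp\min(\norm{h_\spat}^2,b_n^{p/2}\norm{h_\spat})$. Likewise $n(\log t-\log t_0)^2\asymp h_t^2$. This gives \eqref{eq:affinity-joint}. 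All constants come from \Cref{lem:recovery,lem:projection}, which are uniform in $\theta_0$.

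\textbf{Main obstacle.} The key difficulty is the inequality reducing the multivariate affinity to a product of conditional-variance affinities. The conditional means differ under $\theta$ and $\theta_0$, and the averaged covariance $(A+B)/2$ is not triangular in the same basis. I would first attempt the Schur-complement/Fischer argument sketched in Step 1. If that fails, I would fall back on the chain rule for the Hellinger affinity, $\Aff\le\prod\sup_{\text{past}}\Aff(\text{conditionals})$, using that the one-dimensional Gaussian affinity is maximized when the means agree.
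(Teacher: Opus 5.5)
Your proposal is correct. Outside one step it is the paper's proof:
\begin{itemize}
\item the same two parameter-free statistics $H_nY_n$ and $W_n$, combined with \Cref{lem:affinity-monotone};
\item the transfer \eqref{eq:recovery-affinity};
\item the degree-ordered conditional variances $s\,l^{-2|\alpha|}$;
\item \Cref{lem:fraction} applied to $\log(s/s_0)-2|\alpha|\log(l/l_0)$ with $\alpha$ uniform on $\{|\alpha|\le m_n\}$;
\item the $\log\cosh$ bound for the nugget block, and averaging of the two exponents.
\end{itemize}

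The step that differs is the reduction of the multivariate affinity to $\prod_k\cosh\{\frac12\log(v_k/v_k')\}^{-1/2}$, where $v_k,v_k'$ are the conditional variances under $\theta,\theta_0$. Your fallback is exactly what the paper does. It integrates out the last coordinate first by Tonelli and inducts, using $\Aff\{N(\mu,\omega),N(\mu',\omega')\}\le\cosh\{\frac12\log(\omega/\omega')\}^{-1/2}$ for all $\mu,\mu'$. This way the parameter-dependent conditional means never need to be controlled.

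Your primary route also works, but not through Fischer's inequality. Fischer bounds determinants from above, so it points the wrong way for bounding the affinity from above. What you need is the following:
\begin{itemize}
\item $\det M=\prod_kS_k(M)$ for the degree-ordered Schur complements $S_k(M)=\min_c(e_k-c)^\top M(e_k-c)$, with the minimum over $c$ supported on earlier coordinates;
\item $S_k$ is concave, being a minimum of linear functions of $M$, so $S_k\{(A+B)/2\}\ge\frac12(v_k+v_k')$;
\item \eqref{eq:gaussian-affinity} then gives the same product bound in one line.
\end{itemize}
This is your parenthetical about mixtures, stated precisely.

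The two routes trade off as follows. The determinant route is shorter and never mentions conditional means, but it is tied to the Gaussian closed form. The paper's chain-rule argument needs only a bound on each conditional affinity that is uniform over the past, which is a more portable principle.

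Two details need filling in either way. First, the nondegeneracy of $Z=|\alpha|/m$ must be uniform in $m$; the paper computes $\E Z$ and $\E Z^2$ explicitly to get $\Var Z\ge p/\{(p+2)(p+1)^2\}$. Second, the design-dependence and ``every sufficiently large $n$'' clauses need justifying. They follow because \Cref{lem:projection,lem:recovery} use \eqref{eq:coverage} only at the current $n$, with constants depending on the design only through $\calX$, $Q$, $C_h$, and $\kappa$.
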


To control the global maximizer, the likelihood ratio $Z_n(h)$ enters
only through the process $\xi_n=Z_n^{1/(2m)}$. The exponent $1/(2m)$ makes
$\E\xi_n(h)^m$ exactly the affinity, and a change of measure bounds the
$L^m$ Lipschitz constant of $\xi_n$ uniformly over the whole parameter
box. This holds even where covariances at different parameters are not
comparable.

\begin{lemma}[Likelihood-ratio process]\label{lem:lr-process}
Assume \Cref{ass:parameter} and bounded observation locations, and fix
$m\ge1$. On the box $U_n=D_n(\Theta-\theta_0)$ define
\[
 Z_n(h)=\exp\{l_n(\theta_0+D_n^{-1}h)-l_n(\theta_0)\},\qquad
 \xi_n(h)=Z_n(h)^{1/(2m)}.
\]
Then $\xi_n$ has continuous paths,
\begin{equation}\label{eq:lr-moment}
 \E_{\theta_0,n}\xi_n(h)^m
 =\Aff(P_{\theta_0+D_n^{-1}h,n},P_{\theta_0,n}),
 \qquad h\in U_n,
\end{equation}
and there is a constant $C_L$, depending only on $m$, $p$, $\Theta$, and
$\calX$, such that for all $n$ and all $u,v\in U_n$,
\begin{equation}\label{eq:lr-lipschitz}
 \norm{\xi_n(u)-\xi_n(v)}_{L^m(\PP_{\theta_0,n})}\le C_L\norm{u-v}.
\end{equation}
\end{lemma}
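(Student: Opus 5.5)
Continuity of paths is immediate: $\theta\mapsto l_n(\theta)$ is smooth on $\Theta$ (since $C_n(\theta)\succeq t_-\Id_n$ is uniformly invertible and analytic in $\theta$), so $h\mapsto Z_n(h)$ and hence $\xi_n$ are continuous. For the moment identity \eqref{eq:lr-moment}, note that $Z_n(h)$ is exactly the likelihood ratio $dP_{\theta,n}/dP_{\theta_0,n}$ with $\theta=\theta_0+D_n^{-1}h$. Therefore $\xi_n(h)^m=Z_n(h)^{1/2}$, and
\[
 \E_{\theta_0,n}Z_n(h)^{1/2}=\int\sqrt{dP_{\theta,n}\,dP_{\theta_0,n}}=\Aff(P_{\theta,n},P_{\theta_0,n}).
\]

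For the Lipschitz bound \eqref{eq:lr-lipschitz}, I plan to differentiate along the segment. Write $\theta(u)=\theta_0+D_n^{-1}u$; the box $U_n$ is convex, so the segment $u_\lambda=v+\lambda(u-v)$ stays in $U_n$. Then
\[
 \xi_n(u)-\xi_n(v)=\int_0^1\frac{1}{2m}\,\xi_n(u_\lambda)\,\nabla_h\bigl[l_n(\theta(h))\bigr]_{h=u_\lambda}\cdot(u-v)\,d\lambda .
\]
By Minkowski's inequality it suffices to bound $\sup_{h\in U_n}\norm{\xi_n(h)\,|D_n^{-1}\nabla l_n(\theta(h))|}_{L^m(\PP_{\theta_0})}$ by a constant. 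The key trick is a change of measure. Since $\xi_n(h)^m=Z_n(h)^{1/2}$, we have
\[
 \E_{\theta_0}\bigl[\xi_n^m|g|^m\bigr]=\E_{\theta_0}\bigl[Z_n^{1/2}|g|^m\bigr]\le\bigl(\E_{\theta_0}Z_n\bigr)^{1/2}\bigl(\E_{\theta_0}|g|^{2m}\bigr)^{1/2}.
\]
This is not the right direction, because the moment of $g$ would still be taken under $\theta_0$. Instead I use
\[
 \E_{\theta_0}\bigl[Z_n^{1/2}|g|^m\bigr]=\E_{\theta}\bigl[Z_n^{-1/2}|g|^m\bigr],
\]
or, more cleanly, Cauchy--Schwarz in the form
\[
 \E_{\theta_0}\bigl[Z^{1/2}|g|^m\bigr]\le\bigl(\E_{\theta_0}|g|^{2m}\bigr)^{1/4}\bigl(\E_{\theta}|g|^{2m}\bigr)^{1/4}\cdot 1 .
\]
This follows by writing $Z^{1/2}|g|^m=\bigl(|g|^m\bigr)^{1/2}\bigl(Z|g|^m\bigr)^{1/2}$ and applying Cauchy--Schwarz, which gives the bound $(\E_{\theta_0}|g|^m)^{1/2}(\E_\theta|g|^m)^{1/2}$. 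So it suffices to bound $\E_{\theta'}|g_h|^m$ uniformly in both $\theta'\in\{\theta_0,\theta(h)\}$ and $h$, where $g_h=D_n^{-1}\nabla l_n(\theta(h))$. This step is what makes the argument work even when covariances at distant parameters are incomparable: each factor only involves the score at $\theta(h)$ under a single law.

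It remains to bound moments of the normalized score $g_h$ evaluated at $\theta=\theta(h)$ under a law $Y\sim N(0,C_n(\theta'))$ with $\theta'$ arbitrary in $\Theta$. Each component has the form
\[
 \tfrac12 d_a^{-1}\bigl\{Y^\top C^{-1}C_aC^{-1}Y-\tr(C^{-1}C_a)\bigr\},\qquad C=C_n(\theta).
\]
Writing $Y=C'^{1/2}X$ with $X\sim N(0,\Id_n)$, this becomes a centered quadratic form plus a bias term. Set $B=C'^{1/2}C^{-1}C_aC^{-1}C'^{1/2}$. The component equals $\tfrac12 d_a^{-1}\{(X^\top BX-\tr B)+\tr(C^{-1}C_a(C^{-1}C'-\Id))\}$. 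By \Cref{lem:qf-moments} its $L^m$ norm is at most $C d_a^{-1}\{\norm{B}_\Frob+|\text{bias}|\}$. The difficulty is that $C'$ and $C$ are not uniformly comparable in general. However, both are bounded above by $s_+\norm{R_n}+t_+$ and below by $t_-$, and on the range of the low-rank projection their ratio is only $e^{O(b_n)}$, which is the failure mode. So I would use \Cref{lem:projection}: replace $l_n$ by $\widetilde l_n$ (the errors are $O_\PP(n^{4-A})$ with all moments, so they are harmless after choosing $A$ large), and work in the block coordinates. For the nugget, $F_n'(t)$ involves $\norm{W_n}^2$. Under any $\theta'$, $W_n$ is nearly $N(0,t'\Id_{d_n})$, so $n^{-1/2}F_n'(t)$ has $L^m$ norm $O(1+\sqrt n|t'-t|)$.

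This is where I expect the main obstacle, since $\sqrt n|t'-t|$ can be as large as $\sqrt n$. The escape is to avoid bounding the score separately and instead exploit the Gaussian tilt by $Z^{1/2}$ directly. Under the law with density proportional to $\sqrt{p_\theta p_{\theta_0}}$, $Y$ is Gaussian with covariance $\bar C=2(C^{-1}+C_0^{-1})^{-1}$, and $C,C_0\preceq 2\bar C$-type comparisons hold in both directions up to a factor of $2$: indeed $\bar C\preceq 2C$ and $\bar C\preceq 2C_0$. Thus I will compute
\[
 \E_{\theta_0}\bigl[Z^{1/2}|g|^m\bigr]=\Aff\cdot\E_{\bar C}|g|^m\le\E_{\bar C}|g|^m .
\]
With $Y=\bar C^{1/2}X$, the bias term $\tr(C^{-1}C_a(C^{-1}\bar C-\Id))$ is controlled in operator norm by $\norm{C^{-1/2}C_aC^{-1/2}}$ times a quantity that is still possibly large. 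To finish, I multiply back by $\Aff$: in block coordinates the bias grows at most like $\sqrt{b_n^p}\,\Lambda$ (spatial) or $\sqrt n|\log t/t_0|$ (nugget), while $\Aff$ decays like $\exp(-c\cdot\text{the square of the same quantities})$ by the explicit Gaussian formula \eqref{eq:gaussian-affinity} applied blockwise. The product $x^m e^{-cx^2}$ is bounded, which yields a uniform constant $C_L$. I would check this blockwise decay carefully using the derivative bounds \eqref{eq:raw-operator}--\eqref{eq:raw-frobenius} and their compressed versions.
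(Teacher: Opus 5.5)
Your treatment of path continuity and of \eqref{eq:lr-moment} matches the paper. So does your reduction of \eqref{eq:lr-lipschitz}, via differentiation along the segment and Minkowski's inequality, to a uniform bound on $\E_{\theta_0,n}[Z_n(h)^{1/2}\norm{g_h}^m]$, where $g_h=D_n^{-1}\nabla l_n(\theta_0+D_n^{-1}h)$.

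\textbf{The gap.} You wrote down exactly the right change of measure, $\E_{\theta_0,n}[Z_n^{1/2}\norm{g}^m]=\E_{\theta,n}[Z_n^{-1/2}\norm{g}^m]$, but then abandoned it instead of closing it. The missing step is Cauchy--Schwarz under $P_{\theta,n}$:
\[
 \E_{\theta,n}\bigl[Z_n^{-1/2}\norm g^m\bigr]\le\bigl(\E_{\theta,n}Z_n^{-1}\bigr)^{1/2}\bigl(\E_{\theta,n}\norm g^{2m}\bigr)^{1/2},
 \qquad \E_{\theta,n}Z_n^{-1}=\E_{\theta_0,n}1=1.
\]
Under $P_{\theta,n}$, the score evaluated at the same $\theta$ is an exactly centered quadratic form $\frac12\{X^\top B_a(\theta)X-\tr B_a(\theta)\}$. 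Here $X=C_n(\theta)^{-1/2}Y_n\sim N_n(0,\Id_n)$, and $\norm{B_a(\theta)}_\Frob\le C$ uniformly by \eqref{eq:raw-frobenius}, with $\norm{B_t}_\Frob\le t_-^{-1}$. \Cref{lem:qf-moments} then finishes the proof, and no bias term ever appears. This is the paper's argument. The other Cauchy--Schwarz split you chose needs moments of the score at $\theta$ under $P_{\theta_0,n}$, whose nugget mean is of order $\sqrt n|t-t_0|$. That branch fails, as you noticed.

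\textbf{The fallback.} Your geometric-mean tilt starts from a correct identity, $\E_{\theta_0,n}[Z_n^{1/2}\norm g^m]=\Aff\cdot\E_{\bar C}\norm g^m$. The way you propose to finish it, however, does not work under this lemma's hypotheses.

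Bounding $\Aff$ by $\exp\{-c(\text{bias})^2\}$, with the bias measured through parameter distances such as $b_n^{p/2}\Lambda_n(\theta)$ or $\sqrt n|\log(t/t_0)|$, is a separation (identifiability) statement of the type of \Cref{lem:affinity}. The explicit formula \eqref{eq:gaussian-affinity} alone does not give it. \Cref{lem:affinity} needs \Cref{ass:coverage}, holds only for large $n$, and has constants depending on $Q$, $C_h$, $\kappa$. The present lemma instead assumes only bounded locations and asserts a $C_L$ depending only on $m,p,\Theta,\calX$, valid for all $n$. For example, if all locations coincide, the law does not depend on $l$ at all, so $\Aff$ does not decay in $\Lambda_n$. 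Your blockwise accounting also ignores cross-effects, such as a nugget mismatch shifting the means of the spatial scores.

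\textbf{How the tilt could be rescued.} Compare the bias with the affinity directly rather than through the parameters. Let $C=C_n(\theta)$ and $E=C^{-1/2}\bar CC^{-1/2}-\Id_n$.
\begin{enumerate}
\item The bias is $\E_{\bar C}g_a=\frac12\tr(B_aE)$.
\item The eigenvalues of $E$ are $-\tanh(\frac12\log\mu_k)$, where $\mu_k$ are the eigenvalues of $C^{1/2}C_{n,0}^{-1}C^{1/2}$.
\item Since $\tanh^2x\le2\log\cosh x$, this gives $\norm E_\Frob^2\le-4\log\Aff$.
\item Since $\bar C\preceq2C$, the centered part has Frobenius norm at most $2\norm{B_a}_\Frob$.
\end{enumerate}
Together these give $\Aff\cdot\E_{\bar C}|g_a|^m\le C\sup_{0<x\le1}x\{1+(-\log x)^{m/2}\}<\infty$, with no design information needed. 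This is still a detour compared with the one-line change of measure above. Passing to $\widetilde l_n$ is unnecessary in either route.
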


Combining \Cref{lem:affinity,lem:lr-process} with \Cref{cor:envelope}
gives the tail bound. The key observation is that
$\xi_n(\widehat h_n)\ge\xi_n(0)=1$, because $\thetan$ maximizes the
likelihood. Hence a large deviation of $\widehat h_n$ forces a large
supremum of $\xi_n$ far from the origin.

\begin{lemma}[Exponential tails of the global maximizer]\label{lem:tail}
Under \Cref{ass:parameter,ass:coverage}, write
$\widehat h_n=D_n(\thetan-\theta_0)
=(\widehat h_{n,\spat},\widehat h_{n,t})$. There are constants $c,C>0$
and $n_0$ such that \eqref{eq:tail-bound} holds for all $n\ge n_0$ and
$H\ge0$. The constants do not depend on $\theta_0\in\Theta$, and they
depend on the design only through $\calX$, $Q$, $C_h$, and $\kappa$. For
$n\ge n_0$, the bounds hold at every $n$ at which \eqref{eq:coverage} is
satisfied. Interiority of $\theta_0$ is not used.
\end{lemma}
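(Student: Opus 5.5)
The plan is to combine \Cref{lem:affinity}, \Cref{lem:lr-process} and \Cref{cor:envelope}, following the Ibragimov--Has'minskii scheme. Fix an integer $m>k=3$, say $m=8$, and work on the box $U_n=D_n(\Theta-\theta_0)$. This box is a product of compact intervals, and we split the coordinates as $u=(u_1,u_2)$ with $u_1=h_\spat\in\R^2$ and $u_2=h_t\in\R$, so that $k_1=2$ and $k_2=1$. Let $\xi_n=Z_n^{1/(2m)}$ be the process of \Cref{lem:lr-process}. Its paths are continuous. By \eqref{eq:lr-moment} and \eqref{eq:affinity-joint},
\[
\E_{\theta_0,n}\xi_n(h)^m=\Aff(P_{\theta_0+D_n^{-1}h,n},P_{\theta_0,n})\le e^{e_n}\exp\bigl[-c\{\psi_{\gamma_1}(\norm{h_\spat})+\psi_\infty(|h_t|)\}\bigr],
\]
where $\gamma_1=b_n^{p/2}\ge1$. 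Since $e_n\to0$, we have $e^{e_n}\le2$ for $n\ge n_0$. Hence \eqref{eq:envelope-moment} holds with $A=2$ and $a=c$. By \eqref{eq:lr-lipschitz}, \eqref{eq:envelope-lipschitz} holds with $B=C_L$. Both constants are independent of $n$, of $\theta_0$, and of the design beyond $\calX,Q,C_h,\kappa$. This uniformity is the essential point: \Cref{cor:envelope} gives a constant independent of $\gamma_1,\gamma_2$ and of $U$, so the growing box and the $n$-dependent $\gamma_1$ do no harm.

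The key observation is that $\widehat h_n\in U_n$ maximizes $Z_n$ and $Z_n(0)=1$, so $\sup_{u\in E}\xi_n(u)\ge1$ on the event $\{\widehat h_n\in E\}$ for any set $E$. Apply \Cref{cor:envelope} with $(H_1,H_2)=(H,0)$. Markov's inequality then gives
\[
\PP\{\norm{\widehat h_{n,\spat}}\ge H\}\le\E\sup_{|u_1|\ge H}\xi_n(u)^m\le C(A+B^kA^{1-k/m})\exp\{-a''\min(H^2,b_n^{p/2}H)\}.
\]
This is the first bound in \eqref{eq:tail-bound}. Taking $(H_1,H_2)=(0,H)$ gives $\PP\{|\widehat h_{n,t}|\ge H\}\le Ce^{-a''H^2}$, which is the second bound. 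One detail needs care: the corollary's constraint is $|u_1|\ge H_1$ and $|u_2|\ge H_2$, and setting the other threshold to $0$ imposes no restriction. If the set being maximized over is empty, the supremum is $0$ and the probability vanishes, so that case is trivial. Measurability of $\widehat h_n$, or of the events involved, is handled by bounding with the supremum event itself, which is measurable because the paths are continuous.

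The substantive obstacles, namely the global affinity separation and the uniform Lipschitz bound, are already supplied by \Cref{lem:affinity,lem:lr-process}. The remaining care is bookkeeping of uniformity. The threshold $n_0$ only needs $e_n\le\log2$, where $e_n$ is deterministic and independent of $\theta_0$. The coverage \eqref{eq:coverage} is used only through \Cref{lem:affinity} at the given $n$. Interiority of $\theta_0$ never enters, because \Cref{cor:envelope} allows an arbitrary product box $U$ that need not be centred at $0$ or contain a neighbourhood of it.
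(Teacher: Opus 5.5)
Your proposal is correct and follows essentially the same route as the paper. Both arguments take the envelope \eqref{eq:envelope-moment} from \Cref{lem:lr-process} and \Cref{lem:affinity} with $\gamma_1=b_n^{p/2}$ and $\gamma_2=\infty$, the Lipschitz constant $B=C_L$, the observation $\xi_n(\widehat h_n)\ge\xi_n(0)=1$, and Markov's inequality with \Cref{cor:envelope} at $(H_1,H_2)=(H,0)$ and $(0,H)$. The differences are cosmetic: the paper takes $m=4$ and $A=\exp(\sup_ne_n)$ instead of $m=8$ and $A=2$. One detail to fix: $n_0$ must also exceed the threshold beyond which \eqref{eq:affinity-joint} holds, not just the point where $e_n\le\log2$.
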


The upper information bounds follow from the derivative bounds of
\Cref{lem:projection}. The lower bound on $J_{n,\spat}$ avoids
differentiating the coefficient recovery error. Instead, it compares a
local quadratic expansion of the affinity at distance one, in rescaled
coordinates, with the separation bound of \Cref{lem:affinity}.

\begin{lemma}[Information bounds]\label{lem:information}
Under \Cref{ass:parameter} and bounded observation locations,
\begin{equation*}
 \sup_{\theta\in\Theta}\calI_{ss,n}(\theta)\le Cb_n^p,\qquad
 \sup_{\theta\in\Theta}\calI_{ll,n}(\theta)\le Cb_n^{p+2},\qquad
 \sup_{\theta\in\Theta}\calI_{tt,n}(\theta)\le Cn.
\end{equation*}
If \Cref{ass:coverage} also holds, then
\eqref{eq:information-bounds} and \eqref{eq:information-block} hold.
\end{lemma}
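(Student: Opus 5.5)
The upper bounds come first, since they need only \Cref{lem:projection}. By definition, $\calI_{aa,n}(\theta)=\tfrac12\|C_n^{-1/2}C_{n,a}C_n^{-1/2}\|_\Frob^2$. For $a=s$ we take $(a,c)=(1,0)$ in \eqref{eq:raw-frobenius}, which gives $\calI_{ss,n}\le Cb_n^p$. For $a=l$ we take $(0,1)$, which gives $Cb_n^{p+2}$. For $a=t$ we have $C_{n,t}=\Id_n$ and $C_n\succeq t_-\Id_n$, so $\calI_{tt,n}=\tfrac12\tr(C_n^{-2})\le n/(2t_-^2)$. Cauchy--Schwarz in the trace inner product bounds the off-diagonal entries by the geometric means of the diagonal ones. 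Hence every entry of $J_n$ is bounded, and $J_n\preceq C\Id_3$ uniformly in $\theta_0$.

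For the block structure \eqref{eq:information-block}, we treat the nugget entry and the cross terms separately. Write $C_{n,0}^{-1}=t_0^{-1}(\Id-P_n)+\text{(remainder on range }P_n)$, using \eqref{eq:projection-approx}. Then $\tr(C_{n,0}^{-2})=t_0^{-2}(n-r_n)+O(r_n)+o(1)$, so $\calI_{tt,n}/n\to(2t_0^2)^{-1}$ because $r_n=O(b_n^p)=o(n)$. For a cross term $\calI_{at}$ with $a\in\{s,l\}$, we use $|\tr(C^{-1}C_aC^{-1})|=|\tr(C^{-1/2}C_aC^{-1/2}\cdot C^{-1})|$. Up to the $n^{-A}$ error, $C_{n,a}$ lives on the range of $P_n$, so the trace is bounded by $r_n^{1/2}\|C^{-1/2}C_aC^{-1/2}\|_\Frob\,\|C^{-1}\|_\op\le Cb_n^{p/2}\cdot b_n^{p/2+c}$. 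Dividing by $b_n^{p/2+c}\sqrt n$ gives $O(b_n^{p/2}/\sqrt n)=o(1)$.

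The main step is the lower bound $J_{n,\spat}\succeq c\Id_2$. The plan is to compare a second-order expansion of the affinity with \Cref{lem:affinity}, and to avoid differentiating the recovery error. Fix a unit vector $u\in\R^2$ and set $\theta_\delta=\theta_0+D_n^{-1}(\delta u,0)$ for a fixed $\delta>0$, small but independent of $n$; this point lies in $\Theta$ for large $n$ by interiority of $\theta_0$. We use the Gaussian formula \eqref{eq:gaussian-affinity}, namely $-\log\Aff(A,B)=\tfrac12\log\det\frac{A+B}2-\tfrac14\log\det A-\tfrac14\log\det B$. Expanding along the path with $B=C_n(\theta_\delta)$ and $A=C_{n,0}$, the second derivative at $0$ is $\tfrac14\,\delta^2u^\top J_{n,\spat}u$. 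The third derivative along the path is controlled by the normalized derivative bounds of \Cref{lem:projection}: Frobenius norm $C\epsilon_n^{k-1}$ for order $k$, and operator norms $C\epsilon_n^k$. Each third-order term is a trace of a product of three whitened derivatives, so it is $O(1)$ times $\|\cdot\|_\Frob^2\|\cdot\|_\op\lesssim 1$. We also need uniformity along the path, which we get from the uniform whitening bounds in $\theta$. This yields
\[
-\log\Aff(P_{\theta_\delta,n},P_{\theta_0,n})\le \tfrac18\delta^2u^\top J_{n,\spat}u+C\delta^3.
\]
On the other hand, \eqref{eq:affinity-joint} with $h_t=0$ and $\|h_\spat\|=\delta\le b_n^{p/2}$ gives the lower bound $c\delta^2-e_n$. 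Choosing $\delta$ so small that $C\delta^3\le c\delta^2/2$, and then $n$ so large that $e_n\le c\delta^2/4$, gives $u^\top J_{n,\spat}u\ge 2c$ uniformly in $u$. The expected obstacle is making the cubic remainder genuinely $O(\delta^3)$ uniformly in $n$. The worry is that $l$-derivatives carry factors $b_n$, but these are exactly absorbed by the normalization $D_n$, so the bound is $\epsilon_n^{2}\cdot\epsilon_n^{0}$-type or better. If this fails, the fallback is to expand only to second order with an integral remainder evaluated at intermediate points, using the uniform-in-$\theta$ versions of the bounds.

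Finally, $J_n$ has a block form whose off-diagonal entries are $o(1)$, a spatial block $\succeq c\Id_2$, and a nugget entry tending to $(2t_0^2)^{-1}>0$. Hence $J_n\succeq c'\Id_3$ for large $n$, which completes \eqref{eq:information-bounds}.
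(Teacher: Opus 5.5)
Your proposal is correct and follows the paper's proof in all three parts. The upper bounds come from the Frobenius bounds of \Cref{lem:projection}. The nugget entry and cross terms are handled through the rank-$r_n$ projection and a nuclear-norm trace bound. The spatial lower bound compares a second-order expansion of the Gaussian affinity at a fixed rescaled distance with \eqref{eq:affinity-joint}, which avoids differentiating the recovery error.

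The only difference is technical. The paper expands the whitened increment $E_n(h)$ at $C_{n,0}$, which needs its covariance-comparability step, and gets an $O(\epsilon_n)$ remainder directly at $\norm{h}=1$. Your path expansion needs no comparability, since $A+B\succeq B$. Its cubic remainder is in fact $O(\delta^3\epsilon_n)$, so a small $\delta$ is unnecessary. One correction to your sketch: the lone terms $\tr\{(A+B)^{-1}\partial_\delta^3B\}$ and $\tr(B^{-1}\partial_\delta^3B)$ are not triple products. They must be bounded through the rank-$r_n$ structure, by $Cr_n\epsilon_n^3=O(\epsilon_n)$.
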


The remaining stage is local. Because \Cref{thm:rbf}(iv) already gives
tightness, the main proof needs the quadratic expansion only on balls of
fixed radius. The next lemma gives it on balls of every radius
$o(b_n^{p/6})$, and \Cref{rem:radius} shows that this radius cannot be
enlarged.

\begin{lemma}[Uniform local quadratic expansion]\label{lem:quadratic}
Under \Cref{ass:parameter} and bounded observation locations, set
$\Delta_n=D_n^{-1}\nabla l_n(\theta_0)$ and
$L_n(h)=l_n(\theta_0+D_n^{-1}h)$.
Then $\E\Delta_n=0$, $\Cov(\Delta_n)=J_n$, and
\begin{equation}\label{eq:hessian-local}
 \nabla_h^2L_n(0)=-J_n+O_{\PP}(\epsilon_n).
\end{equation}
Let $H_n\ge1$ be any sequence with $\epsilon_nH_n\to0$. Then
\begin{equation}\label{eq:third-local}
 \sup_{\norm{h}\le H_n}\max_{a,b,c}
 |\partial_{h_ah_bh_c}^3L_n(h)|=O_{\PP}(\epsilon_n).
\end{equation}
For all sufficiently large $n$, the ball $\{\norm h\le H_n\}$ lies in the
rescaled parameter space $D_n(\Theta-\theta_0)$, and
\begin{equation}\label{eq:quadratic-remainder}
 \sup_{\norm{h}\le H_n}
 \left|L_n(h)-L_n(0)-h^\top\Delta_n+\frac12h^\top J_nh\right|
 =O_{\PP}(\epsilon_nH_n^3).
\end{equation}
In particular, if $\epsilon_nH_n^3\to0$, that is, $H_n=o(b_n^{p/6})$, then
\begin{equation}\label{eq:uniform-quadratic}
 \sup_{\norm{h}\le H_n}
 \left|L_n(h)-L_n(0)-h^\top\Delta_n+\frac12h^\top J_nh\right|
 =o_{\PP}(1).
\end{equation}
This applies, in particular, to every fixed radius.
\end{lemma}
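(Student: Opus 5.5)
The plan is a Taylor expansion of $L_n(h)=l_n(\theta_0+D_n^{-1}h)$ around $h=0$, with derivatives controlled through the trace formulas for Gaussian log likelihoods. Every derivative of $l_n$ in $\theta$ is a sum of terms of the form $\tr(C_n^{-1}C_{n,a}\cdots)$ plus quadratic forms $Y_n^\top(\cdots)Y_n$. Here $(\cdots)$ denotes a product of $C_n^{-1}$ and covariance derivatives. In rescaled coordinates each covariance derivative is multiplied by the corresponding entries of $D_n^{-1}$. I will therefore use the normalized derivative bounds of \Cref{lem:projection} throughout: whitened normalized spatial derivatives of order $k$ have operator norm $C\epsilon_n^k$ and Frobenius norm $C\epsilon_n^{k-1}$. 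The first nugget derivative has operator norm $O(n^{-1/2})$ and Frobenius norm $O(1)$, and mixed nugget derivatives of order at least two vanish.

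\textbf{Step 1: mean, covariance and Hessian.} The identities $\E\Delta_n=0$ and $\Cov(\Delta_n)=J_n$ are the standard score identities for $N(0,C_n(\theta))$. The score is $\tfrac12\{Y^\top C^{-1}C_aC^{-1}Y-\tr(C^{-1}C_a)\}$, and the variance of a Gaussian quadratic form gives exactly \eqref{eq:fisher}, then normalized by $D_n$. For the Hessian, $\partial_{ab}l_n=-[\calI_n]_{ab}+R_{ab}$, where $R_{ab}$ is a centered quadratic form $X^\top BX-\tr B$ with $X=C_{n,0}^{-1/2}Y\sim N(0,\Id)$. After normalization, $B$ is a sum of products of two whitened normalized first derivatives and one whitened normalized second derivative. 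I bound $\norm{B}_\Frob$ by operator$\times$Frobenius. For two spatial directions this gives $\epsilon_n\cdot\epsilon_n^0=\epsilon_n$ and $\epsilon_n^{1}$. If a nugget direction is involved, the bound is $n^{-1/2}\cdot O(1)$ or $\epsilon_n\cdot O(1)$. In every case the bound is $O(\epsilon_n)$. \Cref{lem:qf-moments} with $q=2$ then gives $R_{ab}=O_\PP(\epsilon_n)$, which proves \eqref{eq:hessian-local}.

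\textbf{Step 2: uniform third derivatives.} Since $\epsilon_nH_n\to0$ and $\theta_0\in\intt(\Theta)$, the ball $\{\norm h\le H_n\}$ maps into $\{\theta:\norm{\theta-\theta_0}\le C\epsilon_nH_n\}\subset\Theta$ for large $n$; this also yields the containment claim. The third derivative at $h$ is a trace term plus a quadratic form, now with $C_n(\theta)^{-1}$ in place of $C_{n,0}^{-1}$.

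\emph{Trace part.} It is deterministic, and the normalized derivative bounds give a sum of products of three normalized whitened factors. Bounding one factor in Frobenius norm and the others in operator norm gives $O(\epsilon_n)$ uniformly in $\theta$, since for example $\epsilon_n^{k-1}$ for a single third-order spatial factor is $\epsilon_n^2$.

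\emph{Stochastic part.} Write $Y=C_{n,0}^{1/2}X$. On the small neighbourhood, $C_n(\theta)$ and $C_{n,0}$ are uniformly comparable: $C_{n,0}^{-1/2}C_n(\theta)C_{n,0}^{-1/2}=\Id+O(\epsilon_nH_n)$ in operator norm, by integrating first derivatives. So the quadratic form equals $X^\top B_\theta X$ with $\norm{B_\theta}_\op\le C\epsilon_n$ and $\norm{B_\theta}_\Frob\le C$ (at worst, with the trace part already $O(\epsilon_n)$). Pointwise this does not suffice: I need $X^\top B_\theta X-\tr B_\theta$ to be $O(\epsilon_n)$ uniformly. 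This is the main obstacle. A naive Frobenius bound gives only $O(1)$ fluctuations, because the third-order nugget and spatial terms carry Frobenius norm $O(\epsilon_n^{2})$ or smaller only when a derivative of order at least two is present.

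I will redo the count carefully. Each normalized third-derivative matrix contains at least three normalized derivative factors, or higher-order ones. Using operator norm on all but one factor and Frobenius on that one gives Frobenius norm at most $C\epsilon_n^{2}$ for spatial-only terms; for terms involving the nugget, $n^{-1/2}$ factors appear instead. Hence pointwise the centered part is $O_\PP(\epsilon_n^2)$. I then pass to the supremum using \Cref{lem:kolmogorov}/\Cref{prop:cover} with $m>3$. By the same counting, the $h$-gradient of the centered part has $L^m$ norm $O(\epsilon_n^3)$, using \Cref{lem:qf-moments}. Covering the ball of radius $H_n$ by $O(H_n^3)$ unit cubes, the $L^m$ norm of the supremum is at most $C\epsilon_n^2H_n^{3/m}$, which is $o(\epsilon_n)$ once $m$ is large, since $H_n=o(\epsilon_n^{-1})$. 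This gives \eqref{eq:third-local}.

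\textbf{Step 3: quadratic remainder.} Finally, a third-order Taylor expansion of $L_n$ along the segment from $0$ to $h$ gives \eqref{eq:quadratic-remainder}. The remainder is
\[
h^\top(\nabla^2L_n(0)+J_n)h/2+O\bigl(\norm h^3\sup|\partial^3L_n|\bigr)=O_\PP(\epsilon_nH_n^2+\epsilon_nH_n^3),
\]
and since $H_n\ge1$ this is $O_\PP(\epsilon_nH_n^3)$, using Steps 1 and 2. \eqref{eq:uniform-quadratic} is then immediate.
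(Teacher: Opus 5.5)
Your Steps 1 and 3 are the paper's argument: the Hessian identity with $M_{ab}=B_{ab}-B_aB_b-B_bB_a$ and $\norm{M_{ab}}_{\Frob}\le C\epsilon_n$, then third-order Taylor. Step 2 takes a different route.

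\textbf{How the paper does Step 2.} The paper never handles the full $n$-dimensional likelihood at this point. It passes to the reduced likelihood of \Cref{lem:projection}, whose spatial part lives on the fixed block $Z_n$ of dimension $r_n=O(b_n^p)=O(\epsilon_n^{-2})$, with covariance $A_\theta$. Every term of a third derivative is then a trace or a quadratic form of a product with operator norm at most $C\epsilon_n^3$. Traces are therefore at most $Cr_n\epsilon_n^3$, and quadratic forms at most $C\epsilon_n^3Z_n^\top A_\theta^{-1}Z_n$. Comparability of $A_\theta$ with $A_{\theta_0}$ on the ball reduces the whole supremum to the single variable $Z_n^\top A_{\theta_0}^{-1}Z_n\sim\chi^2_{r_n}$. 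The noise block and \eqref{eq:likelihood-approx} handle the rest. There is no centering, no chaining, and only covariance derivatives of order three are used.

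\textbf{Your route.} You bound the centered fluctuation, whose Frobenius size is $\epsilon_n^2$, and take the supremum with \Cref{prop:cover}. This is workable and even gives a sharper fluctuation bound. As written, however, it has two holes.

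\emph{The deterministic part.} The third derivative contains two single-factor terms:
\begin{itemize}
\item $-\frac12\tr B_{abc}(\theta)$, from the log determinant;
\item a quadratic form with matrix $B_{abc}(\theta)$, whose $\PP_{\theta_0}$-mean is $\frac12\tr\{B_{abc}(\theta)G_\theta\}$, where $G_\theta=C_n(\theta)^{-1/2}C_{n,0}C_n(\theta)^{-1/2}$.
\end{itemize}
Your justification ``$\epsilon_n^{k-1}=\epsilon_n^2$ for a single third-order factor'' implicitly uses $|\tr B|\le\norm{B}_{\Frob}$, which is false. Operator-times-Frobenius counting controls only traces of products of at least two factors. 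Also, once you center the quadratic form at $\theta\neq\theta_0$, you must bound its mean $\tr B_\theta$ as well, and you never do. There are two ways to repair this:
\begin{itemize}
\item Use the near low rank from \Cref{lem:projection}: $|\tr B_{abc}|\le Cr_n\epsilon_n^3+O(n^{1-A})=O(\epsilon_n)$.
\item Pair the two single-factor terms. Their sum is $\frac12\tr\{B_{abc}(G_\theta-\Id)\}$. Integrating first derivatives, as in your comparability step, gives $\norm{G_\theta-\Id}_{\Frob}\le CH_n$, so the sum is $O(\epsilon_n^2H_n)=o(\epsilon_n)$.
\end{itemize}
The multi-factor traces and means are fine by your counting, with an extra factor $\norm{G_\theta}_{\op}\le C$.

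\emph{The Lipschitz constant.} Differentiating the third-derivative matrix in $h$ produces fourth-order covariance derivatives $\partial_l^4C_n$ and $\partial_s\partial_l^3C_n$. You need their whitened, normalized Frobenius norms to be $O(\epsilon_n^3)$. \Cref{lem:projection} is stated only for total order at most three. Its proof (the induction \eqref{eq:truncated-derivative} plus the factorial tail bound) extends to any fixed order, but you have to say so. The paper's route avoids this entirely.

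Also drop your intermediate claims $\norm{B_\theta}_{\Frob}\le C$ and ``$O(1)$ fluctuations''; the correct count is the $O(\epsilon_n^2)$ you use afterwards. With these repairs, $m=4$ already gives an $L^4$ bound on the supremum of $C\epsilon_n^2H_n^{3/4}=o(\epsilon_n^{5/4})$.
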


\begin{remark}[The radius $b_n^{p/6}$ is sharp]\label{rem:radius}
The condition $\epsilon_nH_n^3\to0$ cannot be weakened. Under
\Cref{ass:parameter,ass:coverage}, there is $c>0$ such that, for every
fixed $\kappa>0$, the point $h_n=\kappa\epsilon_n^{-1/3}e_s$ in the
variance direction satisfies
\[
 \PP\Bigl\{L_n(h_n)-L_n(0)-h_n^\top\Delta_n+\frac12h_n^\top J_nh_n
 \ge c\kappa^3\Bigr\}\to1.
\]
Hence \eqref{eq:uniform-quadratic} fails for every sequence $H_n$ with
$\liminf_n\epsilon_nH_n^3>0$. The reason is that the third derivative of
the log likelihood in the variance direction is of exact order
$\epsilon_n$, as for the variance of $N\asymp b_n^p$ independent normal
observations.
\end{remark}

The score is a vector of centered Gaussian quadratic forms. Each
normalized whitened derivative has operator norm $O(\epsilon_n)$ but
Frobenius norm of order one, so the quadratic forms are sums of many
small independent contributions, and a Lyapunov argument gives the
normal limit.

\begin{lemma}[Fisher-normalized score central limit theorem]\label{lem:score}
Assume \Cref{ass:parameter,ass:coverage}, and let $M_n$ be $3\times3$
matrices with $M_n^\top M_n=\calI_n(\theta_0)$. Then
\begin{equation}\label{eq:score-clt}
 M_n^{-\T}\nabla l_n(\theta_0)
 \Longrightarrow N_3(0,\Id_3).
\end{equation}
The same quadratic-form argument applies to bounded sequences of linear
combinations of $\Delta_n$ whose variances converge to a positive limit.
\end{lemma}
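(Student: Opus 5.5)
The plan is to write $\nabla l_n(\theta_0)$ as a vector of centered Gaussian quadratic forms in a single standard normal vector and reduce the multivariate statement to a scalar Lyapunov CLT via Cram\'er--Wold. Set $X=C_{n,0}^{-1/2}Y_n\sim N_n(0,\Id_n)$ and $B_a=\frac12 C_{n,0}^{-1/2}C_{n,a}(\theta_0)C_{n,0}^{-1/2}$ for $a\in\{s,l,t\}$. Differentiating \eqref{eq:likelihood} gives $\partial_a l_n(\theta_0)=X^\top B_aX-\tr B_a$, with covariance $2\tr(B_aB_b)=[\calI_n(\theta_0)]_{ab}$. Hence $V_n:=M_n^{-\T}\nabla l_n(\theta_0)$ has mean zero and covariance $M_n^{-\T}\calI_n(\theta_0)M_n^{-1}=\Id_3$ exactly, for every admissible $M_n$. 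Fix a unit vector $u\in\R^3$. It suffices to show $u^\top V_n\Rightarrow N(0,1)$; tightness and the Cram\'er--Wold device then give the joint limit.

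Write $u^\top V_n=X^\top B_{n,u}X-\tr B_{n,u}$, where $B_{n,u}=\sum_a w_{n,a}B_a$ and $w_n=M_n^{-1}u$. Diagonalizing $B_{n,u}=\sum_i\lambda_i v_iv_i^\top$ gives $u^\top V_n=\sum_i\lambda_i(g_i^2-1)$ with $g_i$ i.i.d.\ $N(0,1)$ and $2\sum_i\lambda_i^2=2\norm{B_{n,u}}_\Frob^2=1$. For a weighted sum of independent centered $\chi^2_1$ variables, the Lyapunov condition with fourth moments reduces to $\sum_i\lambda_i^4\le\max_i\lambda_i^2\sum_i\lambda_i^2\to0$. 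So the whole proof reduces to showing $\norm{B_{n,u}}_{\op}\to0$ uniformly in unit $u$.

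This operator-norm bound is the main step, and I would get it from \Cref{lem:projection} and \Cref{lem:information}. Write $w_n=D_n^{-1}(D_nM_n^{-1}u)$. Since $M_n^\top M_n=\calI_n(\theta_0)=D_nJ_nD_n$, we have $\norm{D_nM_n^{-1}u}^2=u^\top M_n^{-\T}D_n^2M_n^{-1}u$. The matrix $M_n^{-\T}D_n^2M_n^{-1}$ is similar to $D_n\calI_n(\theta_0)^{-1}D_n=J_n^{-1}$, which is bounded by $c^{-1}$ by \eqref{eq:information-bounds}. So the normalized weights $\tilde w_n=D_nM_n^{-1}u$ are uniformly bounded, and
\[
\norm{B_{n,u}}_{\op}\le C\max_a\norm{d_{n,a}^{-1}C_{n,0}^{-1/2}C_{n,a}C_{n,0}^{-1/2}}_{\op}.
\]
By \Cref{lem:projection}, the normalized spatial first derivatives have operator norm $O(\epsilon_n)=O(b_n^{-p/2})$. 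The normalized nugget derivative $n^{-1/2}C_{n,0}^{-1}$ has operator norm at most $n^{-1/2}/t_0$. Hence $\max_i|\lambda_i|=O(\epsilon_n+n^{-1/2})\to0$ uniformly in $u$ and in the choice of $M_n$, and the Lyapunov ratio is $O(\epsilon_n^2)$.

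The only delicate point is that $M_n$ is arbitrary, for instance not triangular, and need not be related to $D_n$. The argument above avoids this by working only through the bounded matrix $J_n^{-1}$ and never requiring $J_n$ to converge. The same computation handles any bounded sequence of linear combinations $\sum_a w_a[\Delta_n]_a$ whose variance tends to a positive limit: rescaling to unit variance gives a Frobenius norm of order one and an operator norm of $O(\epsilon_n)$, which proves the final sentence of the lemma.
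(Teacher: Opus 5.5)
Your proposal is correct and follows the paper's own proof essentially step for step. You write the normalized score as a centered Gaussian quadratic form $X^\top B X-\tr B$, and you bound the coefficient vector $D_nM_n^{-1}u$ through $\norm{J_n^{-1}}_{\op}\le C$ from the information lemma. You then get $\norm{B}_{\op}=O(\epsilon_n)$ from the projection lemma and conclude with the fourth-moment Lyapunov ratio $\max_i\lambda_i^2/\sum_i\lambda_i^2\to0$ and the Cram\'er--Wold device, which is exactly the paper's argument.
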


The last lemma is used only for \Cref{cor:random}. Independent sampling
with a density bounded below on $Q$ satisfies \Cref{ass:coverage} almost
surely, with polynomially small failure probability at each $n$.

\begin{lemma}[Coverage for independent sampling]\label{lem:random}
Under the sampling assumptions of \Cref{cor:random}, almost surely,
\begin{equation}\label{eq:random-fill}
 h_n(Q)=O((\log n/n)^{1/p}).
\end{equation}
More precisely, for every $q>0$ there are constants $A_q,C_q<\infty$ such
that $\PP\{h_n(Q)>A_q(\log n/n)^{1/p}\}\le C_qn^{-q}$ for all $n\ge2$.
Thus \Cref{ass:coverage} holds almost surely for every fixed
$0<\kappa<1/p$.
\end{lemma}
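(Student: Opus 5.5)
The plan is to prove \eqref{eq:random-fill} by a standard covering-and-union-bound argument, and then upgrade the tail estimate to an almost sure statement with Borel--Cantelli. Put $\delta_n=A(\log n/n)^{1/p}$, with $A$ to be chosen depending on $q$. Partition $Q=[0,L]^p$ (after the rigid motion) into $N_n\asymp(L/\delta_n)^p$ closed subcubes of side $\delta_n/\sqrt p$, so each subcube has diameter at most $\delta_n$. Suppose every subcube contains at least one of $X_1,\dots,X_n$. Then any $x\in Q$ lies in some subcube containing a sample point, so $\min_i\|x-X_i\|\le\delta_n$, and hence $h_n(Q)\le\delta_n$. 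The event $\{h_n(Q)>\delta_n\}$ is therefore contained in the union over subcubes of the event that the subcube is empty.

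For a single subcube $B$, the density lower bound gives $\PP(X_1\in B)\ge f_-\,|B|=f_-(\delta_n/\sqrt p)^p=f_-A^pp^{-p/2}\log n/n$. By independence,
\[
\PP(\text{$B$ empty})\le\bigl(1-f_-A^pp^{-p/2}\log n/n\bigr)^n\le n^{-f_-A^pp^{-p/2}}.
\]
The union bound over $N_n\le C(n/\log n)\le Cn$ subcubes then gives $\PP\{h_n(Q)>\delta_n\}\le Cn^{1-f_-A^pp^{-p/2}}$. Choosing $A=A_q$ so that $f_-A^pp^{-p/2}\ge q+1$ yields the bound $C_qn^{-q}$ for all $n\ge2$. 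For small $n$, where subcube sizes may exceed $Q$, the constant $C_q$ absorbs the finitely many cases, or one uses the trivial bound $1$.

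For the almost sure statement, take $q=2$. Then $\sum_n\PP\{h_n(Q)>A_2(\log n/n)^{1/p}\}<\infty$, so by Borel--Cantelli $h_n(Q)\le A_2(\log n/n)^{1/p}$ eventually, almost surely. Here $h_n$ uses the first $n$ points of the single sequence $\mathbf X$, so the events live on one probability space and Borel--Cantelli applies directly. Finally, for any fixed $\kappa<1/p$ we have $(\log n/n)^{1/p}\le n^{-\kappa}$ for all large $n$, so \Cref{ass:coverage} holds with $C_h=1$ and that $\kappa$.

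There is no serious obstacle. The only care points are the boundary bookkeeping, namely making the partition fit exactly in $Q$ (round $N_n$ per axis up, so the side is at most $\delta_n/\sqrt p$ but comparable to it), and checking that the density bound holds almost everywhere on each subcube so the probability lower bound is valid.
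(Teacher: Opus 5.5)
Your proposal is correct and follows essentially the same route as the paper. It partitions $Q$ into subcubes whose sampling probability is of order $\log n/n$, bounds the probability that some subcube is empty by $n^{-q}$ using the density lower bound, $1-x\le e^{-x}$, a union bound and a large enough constant $A$, and then applies Borel--Cantelli with $q=2$. The only difference is bookkeeping: the paper fixes the number of subcubes per axis, $k_n=\lfloor (n/(A\log n))^{1/p}\rfloor$, so the partition fits $Q$ exactly and the side-length bound comes from $\lfloor y\rfloor\ge y/2$, whereas you fix the side and round the count up, which only costs a factor $2^{-p}$ in the cell probability that is absorbed into $A_q$.
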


\section{Proofs of the main results}\label{app:proof-main}

\subsection{\texorpdfstring{Proof of \Cref{thm:rbf}}{Proof of the main theorem}}\label{app:proof-thm}

\begin{proof}
\Cref{lem:information} proves \textbf{part (i)}, and \Cref{lem:tail} proves
\textbf{part (iv)}. Write $\widehat h_n=D_n(\thetan-\theta_0)$, recall
$L_n(h)=l_n(\theta_0+D_n^{-1}h)$ from \Cref{lem:quadratic}, and set
\[
 Q_n(h)=h^\top\Delta_n-\frac12h^\top J_nh,\qquad
 h_n^*=J_n^{-1}\Delta_n,\qquad
 R_{n,K}=\sup_{\norm h\le K}\bigl|L_n(h)-L_n(0)-Q_n(h)\bigr|.
\]

\textbf{Part (ii).}
For every fixed $K$, \eqref{eq:uniform-quadratic} applies with the fixed
radius $H_n=\max(K,1)$, so
$R_{n,K}=o_{\PP}(1)$; that is, $L_n(h)-L_n(0)=Q_n(h)+o_{\PP}(1)$
uniformly on $\{\norm h\le K\}$. The function $Q_n$ is quadratic, with
$\nabla Q_n(h_n^*)=\Delta_n-J_nh_n^*=0$ and $\nabla^2Q_n=-J_n$, so its
Taylor expansion about $h_n^*$ is exact:
\begin{equation}\label{eq:Q-taylor}
 Q_n(h)=Q_n(h_n^*)-\frac12(h-h_n^*)^\top J_n(h-h_n^*),\qquad h\in\R^3.
\end{equation}
In particular, $Q_n(h)\le Q_n(h_n^*)$ for all $h$.

Fix $\varepsilon>0$. The score satisfies $\E\norm{\Delta_n}^2=\tr J_n\le C$,
so $h_n^*=O_{\PP}(1)$ by part (i), and $\widehat h_n=O_{\PP}(1)$ by part
(iv). Choose $K$ such that the event
$E_n=\{\norm{\widehat h_n}\le K,\ \norm{h_n^*}\le K\}$ has
$\PP(E_n^c)\le\varepsilon$ for all large $n$. Because $\theta_0$ is
interior, the ball $\{\norm h\le K\}$ lies in $D_n(\Theta-\theta_0)$ for
large $n$. Hence, on $E_n$, the point $\theta_0+D_n^{-1}h_n^*$ belongs
to $\Theta$, and global optimality gives
$L_n(\widehat h_n)\ge L_n(h_n^*)$. Using the expansion at $h_n^*$, this
inequality, the expansion at $\widehat h_n$, and $Q_n\le Q_n(h_n^*)$, we
obtain on $E_n$
\[
 Q_n(h_n^*)-R_{n,K}\le L_n(h_n^*)-L_n(0)\le L_n(\widehat h_n)-L_n(0)
 \le Q_n(\widehat h_n)+R_{n,K}\le Q_n(h_n^*)+R_{n,K}.
\]
Thus the maximized log likelihood ratio is approximated by the maximum
of the quadratic:
\begin{equation}\label{eq:max-value}
 \bigl|L_n(\widehat h_n)-L_n(0)-Q_n(h_n^*)\bigr|\le R_{n,K}
 \qquad\text{on }E_n.
\end{equation}
By \eqref{eq:Q-taylor} at $h=\widehat h_n$, \eqref{eq:max-value}, and the
expansion at $\widehat h_n$, on $E_n$
\[
\begin{aligned}
& \frac12(\widehat h_n-h_n^*)^\top J_n(\widehat h_n-h_n^*)
 = Q_n(h_n^*)-Q_n(\widehat h_n)\\
&\qquad \qquad \le\bigl|L_n(\widehat h_n)-L_n(0)-Q_n(h_n^*)\bigr|
 +\bigl|L_n(\widehat h_n)-L_n(0)-Q_n(\widehat h_n)\bigr|
 \le2R_{n,K}.
\end{aligned}
\]
Since $J_n\succeq c\Id_3$ by part (i), for every $\eta>0$,
\[
 \PP\{\norm{\widehat h_n-h_n^*}>\eta\}
 \le\PP(E_n^c)+\PP\{R_{n,K}\ge c\eta^2/4\}
 \le\varepsilon+o(1).
\]
As $\varepsilon$ is arbitrary, $\widehat h_n-h_n^*=o_{\PP}(1)$, which is
\eqref{eq:mle-expansion}. The three rates and consistency follow.

\textbf{Part (iii).}
Let $T_n=D_nJ_n^{1/2}$, so that $\calI_n(\theta_0)=D_nJ_nD_n=T_nT_n^\top$.
Then
\[
 T_n^\top(\thetan-\theta_0)=J_n^{1/2}\widehat h_n,\qquad
 T_n^{-1}\nabla l_n(\theta_0)=J_n^{-1/2}D_n^{-1}\nabla l_n(\theta_0)
 =J_n^{-1/2}\Delta_n=J_n^{1/2}h_n^*,
\]
and therefore
\begin{equation}\label{eq:factor-remainder}
 T_n^\top(\thetan-\theta_0)-T_n^{-1}\nabla l_n(\theta_0)
 =J_n^{1/2}(\widehat h_n-h_n^*),
\end{equation}
whose norm is at most $\norm{J_n}_{\op}^{1/2}\norm{\widehat h_n-h_n^*}
=o_{\PP}(1)$ by parts (i) and (ii). Now let $M_n^\top M_n=\calI_n(\theta_0)$.
The matrix $O_n=M_nT_n^{-\T}$ is orthogonal, because
$O_nO_n^\top=M_n\calI_n(\theta_0)^{-1}M_n^\top=\Id_3$. Hence $M_n=O_nT_n^\T$ and
$M_n^{-\top}=O_nT_n^{-1}$, and \eqref{eq:factor-remainder} gives
\[
 M_n(\thetan-\theta_0)
 =M_n^{-\top}\nabla l_n(\theta_0)+O_nJ_n^{1/2}(\widehat h_n-h_n^*)
 =M_n^{-\top}\nabla l_n(\theta_0)+o_{\PP}(1).
\]
\Cref{lem:score} and Slutsky's lemma prove \eqref{eq:main-an}.

For the nugget, write $\Delta_{n,t}$ for the third score coordinate.
The block structure in \eqref{eq:information-block} and
\eqref{eq:mle-expansion} give
\[
 \sqrt n(\widehat t_n-t_0)=2t_0^2\Delta_{n,t}+o_{\PP}(1).
\]
Its score variance tends to $(2t_0^2)^{-1}$, and the final assertion of
\Cref{lem:score} gives
$\Delta_{n,t}\Rightarrow N(0,(2t_0^2)^{-1})$.
This proves \eqref{eq:nugget-an}.

\textbf{Part (v).}
Let $\mu>0$. For $n\ge n_0$, part (iv) and
$e^{-c\min(x,y)}\le e^{-cx}+e^{-cy}$ give
\[
 \E e^{\mu\norm{\widehat h_{n,\spat}}}
 =1+\int_0^\infty\mu e^{\mu H}\,
 \PP\{\norm{\widehat h_{n,\spat}}>H\}\,dH
 \le1+C\int_0^\infty\mu e^{\mu H}
 \bigl(e^{-cH^2}+e^{-cb_n^{p/2}H}\bigr)\,dH,
\]
which is bounded uniformly in $n$ once $cb_n^{p/2}\ge2\mu$. Similarly,
$\E e^{\mu|\widehat h_{n,t}|}\le1+C\int_0^\infty\mu e^{\mu H-cH^2}\,dH$.
Since $\norm{\widehat h_n}\le\norm{\widehat h_{n,\spat}}+|\widehat h_{n,t}|$,
the Cauchy--Schwarz inequality with $\mu=2\lambda$ proves
\eqref{eq:exp-moments}.

Let $f$ be as in part (v). By part (iii) with $M_n=T_n^\top$,
$Y_n=J_n^{1/2}\widehat h_n\Rightarrow Z$. The family
$\{f(\widehat h_n)\}$ is uniformly integrable, because
$f(\widehat h_n)^2\le C^2e^{2\lambda\norm{\widehat h_n}}$ has bounded
expectation by \eqref{eq:exp-moments}. Suppose that the first
convergence in \eqref{eq:f-convergence} fails. Then there are $\delta>0$
and a subsequence along which
$|\E f(\widehat h_n)-\E f(J_n^{-1/2}Z)|\ge\delta$. By part (i), a further
subsequence has $J_n\to J_*$ with $c\Id_3\preceq J_*\preceq C\Id_3$.
Along it, $\widehat h_n=J_n^{-1/2}Y_n\Rightarrow J_*^{-1/2}Z$ by Slutsky's
lemma, so uniform integrability gives
$\E f(\widehat h_n)\to\E f(J_*^{-1/2}Z)$. Also
$\E f(J_n^{-1/2}Z)\to\E f(J_*^{-1/2}Z)$ by dominated convergence, since
$|f(J_n^{-1/2}Z)|\le Ce^{\lambda c^{-1/2}\norm Z}$. This contradicts the
choice of $\delta$. For the second convergence, the proof of part (iii)
gives $M_n(\thetan-\theta_0)=O_nY_n$ with $O_n$ orthogonal, so
$\norm{M_n(\thetan-\theta_0)}=\norm{Y_n}\le C^{1/2}\norm{\widehat h_n}$.
By \eqref{eq:exp-moments}, $f(M_n(\thetan-\theta_0))$ is uniformly
integrable, and \eqref{eq:main-an} gives
$\E f(M_n(\thetan-\theta_0))\to\E f(Z)$.

Finally, polynomials satisfy the growth condition. Applying the first
convergence in \eqref{eq:f-convergence} to the coordinates $x_a$ and to
the products $x_ax_b$, and using $\E J_n^{-1/2}Z=0$ and
$\E J_n^{-1/2}ZZ^\top J_n^{-1/2}=J_n^{-1}$, proves
\eqref{eq:moment-convergence}.
\end{proof}

\subsection{Proof of \Cref{rem:uniform}}\label{app:uniform}

We show why the uniformity statements in \Cref{rem:uniform} hold. The arguments
are those of the preceding proofs, and we only point out where uniformity
enters.

\textbf{Part~(i).} The upper bounds in \Cref{lem:information} are suprema
over $\Theta$. The lower bound compares a second-order expansion of the
affinity on the unit sphere $\{\norm h=1\}$ with
\eqref{eq:affinity-joint}. The remainders in that expansion are
controlled by the derivative bounds of \Cref{lem:projection}, which hold
uniformly over $\Theta$, and the constants in \eqref{eq:affinity-joint}
do not depend on $\theta_0$. For $\theta_0$ near the boundary, the points
$\theta_0+D_n^{-1}h$ may leave $\Theta$. Applying the same lemmas on a
slightly larger rectangle $\Theta'\supset\Theta$, whose constants are
again uniform, removes this difficulty. Finally,
$J_{n,tt}\to(2t_0^2)^{-1}$ and the vanishing of the cross-information
hold uniformly in $t_0\in[t_-,t_+]$.

\textbf{Parts~(ii), (iii), and (v).} Let $K\subset\intt(\Theta)$ be
compact. Since $\norm{D_n^{-1}}_{\op}\to0$, for every fixed radius $H$
the ball $\{\norm h\le H\}$ lies in $D_n(\Theta-\theta_0)$ for all
$\theta_0\in K$ once $n$ is large. The remainders in \Cref{lem:quadratic}
are controlled by bounds whose laws do not depend on $\theta_0$. The
Hessian fluctuation is a centered Gaussian quadratic form with Frobenius
norm $O(\epsilon_n)$, and the third-derivative bound uses
$Z_n^\top A_{\theta_0}^{-1}Z_n$, which is exactly $\chi^2_{r_n}$ under
$\theta_0$. In \Cref{lem:score}, the Lyapunov ratio is at most
$C\epsilon_n^2$ uniformly in $\theta_0$. Along any sequence
$\theta_{0,n}\in K$, the Lindeberg--Feller theorem for triangular arrays
therefore gives the normal limit of each fixed linear combination of
the normalized score, and the Cram\'er--Wold device gives the joint
limit. The tail bounds of \Cref{lem:tail} are also uniform. Running the
argument of \Cref{app:proof-thm} along an arbitrary sequence
$\theta_{0,n}\in K$ therefore yields the conclusions of parts~(ii),
(iii), and~(v) along that sequence, which is equivalent to uniformity
over $K$.

Near the boundary the argument breaks down. It uses global optimality in
the form $l_n(\thetan) \ge l_n(\theta_0+D_n^{-1}h_n^*)$, which requires
the unconstrained maximizer $h_n^*$ of the quadratic approximation to be
feasible, that is, $\theta_0+D_n^{-1}h_n^*\in\Theta$. If $\theta_0$ lies
within distance of order $d_{n,j}^{-1}$ of the boundary in some coordinate
$j$, this fails with probability bounded away from zero. The constrained
maximizer $\widehat h_n$ then approximates the maximizer of the quadratic
over the feasible set, whose limit law is not normal.

\subsection{\texorpdfstring{Proof of \Cref{thm:minimax}}{Proof of the lower bounds}}\label{app:proof-lower}

\begin{proof}
Consider a path varying only coordinate $j$. Differentiating its Gaussian
density $p_u$ under the integral and using Cauchy--Schwarz gives
\[
 \norm{\partial_up_u}_{L^1}
 =\E_u|\partial_u\log p_u|
 \le\sqrt{\calI_{jj,n}(u)}.
\]
Consequently,
\begin{equation}\label{eq:tv-information}
 \TV(P_{u_1,n},P_{u_0,n})
 \le\frac12\int_{u_0}^{u_1}
       \sqrt{\calI_{jj,n}(u)}\,du.
\end{equation}
The uniform upper bounds in \Cref{lem:information} are
$\calI_{jj,n}\le C d_{n,j}^2$. Choose $a>0$ small enough that the
right side of \eqref{eq:tv-information} is at most $1/4$ when
$u_1-u_0=a/d_{n,j}$. Both alternatives lie in $\Theta$ eventually.

We now reduce estimation to testing between the two alternatives
(Le Cam's two-point method). Write $u_0=\theta_{0,j}$,
$u_1=u_0+a/d_{n,j}$, $P_0=P_{u_0,n}$, and $P_1=P_{u_1,n}$, and let
$\delta=(u_1-u_0)/2=a/(2d_{n,j})$ be half the separation. Let
$\widetilde\theta_{n,j}$ be any estimator of the $j$th coordinate, that is,
any measurable function of the data vector $Y_n$. It defines a test of
$P_0$ against $P_1$: decide for whichever of the two candidate values
$u_0,u_1$ is closer to the estimate,
\[
 \psi=\mathbf 1\bigl\{|\widetilde\theta_{n,j}-u_1|<|\widetilde\theta_{n,j}-u_0|\bigr\},
\]
so that $\psi=1$ means ``decide $u_1$'' and $\psi=0$ means ``decide
$u_0$''. Equivalently, $\psi=1$ exactly when $\widetilde\theta_{n,j}$
exceeds the midpoint $(u_0+u_1)/2$.

A wrong decision forces a large estimation error. If $\psi=1$, then
$|\widetilde\theta_{n,j}-u_0|>|\widetilde\theta_{n,j}-u_1|$, and the
triangle inequality
$|\widetilde\theta_{n,j}-u_0|+|\widetilde\theta_{n,j}-u_1|\ge u_1-u_0$
gives $|\widetilde\theta_{n,j}-u_0|>\delta$. Likewise, if $\psi=0$, then
$|\widetilde\theta_{n,j}-u_1|\ge\delta$. Hence
\[
 P_0\{|\widetilde\theta_{n,j}-u_0|\ge\delta\}\ge P_0(\psi=1),
 \qquad
 P_1\{|\widetilde\theta_{n,j}-u_1|\ge\delta\}\ge P_1(\psi=0).
\]

The total variation distance bounds how well any test can separate the
two laws. For the event $E=\{\psi=1\}$,
\[
 P_0(\psi=1)+P_1(\psi=0)=1-\{P_1(E)-P_0(E)\}
 \ge1-\TV(P_0,P_1)\ge\frac34.
\]
Combining the last two displays, the two estimation error
probabilities sum to at least $3/4$, so the larger of them is at least
$3/8$. This proves \eqref{eq:probability-lower} with $c_0=3/8$, since the
estimator was arbitrary.

Finally, by Markov's inequality,
$\E_{\theta,n}(\widetilde\theta_{n,j}-\theta_j)^2
\ge\delta^2\,\PP_{\theta,n}\{|\widetilde\theta_{n,j}-\theta_j|\ge\delta\}$.
Taking the larger of the two alternatives and using
\eqref{eq:probability-lower} gives
$\sup_{\theta\in\Theta}\E_{\theta,n}(\widetilde\theta_{n,j}-\theta_j)^2
\ge\frac38\cdot\frac{a^2}{4d_{n,j}^2}$, which is
\eqref{eq:risk-lower}.

For the upper bound, recall from \Cref{lem:tail} that the constants in
the tail bound \eqref{eq:tail-bound} do not depend on $\theta_0\in\Theta$,
and that interiority of $\theta_0$ is not needed. Integrating the tail
bound as in the proof of \Cref{thm:rbf}(v) therefore gives, for
$n\ge n_0$,
\[
 \sup_{\theta\in\Theta}
 \E_{\theta,n}\bigl\{d_{n,j}^2(\widehat\theta_{n,j}-\theta_j)^2\bigr\}
 \le C,
\]
with $C$ independent of $n$ and $\theta$. Since $\Theta$ is bounded, the
finitely many $n<n_0$ are covered by enlarging $C$. This proves
\eqref{eq:minimax-upper}.
Combining this with \eqref{eq:risk-lower} gives
\eqref{eq:minimax-rate}. Since
\[
 (d_{n,s}^{-1},d_{n,l}^{-1},d_{n,t}^{-1})
 =
 (b_n^{-p/2},b_n^{-(p+2)/2},n^{-1/2}),
\]
the stated minimax root MSE rates follow.
\end{proof}

\subsection{Proofs of the Corollaries in \texorpdfstring{\Cref{sec:covariance}}{Section 3.4}}\label{app:proof-cor}

\subsubsection{\texorpdfstring{Proof of \Cref{cor:log-scale}}{Proof of the log-scale corollary}}\label{app:proof-log}

\begin{proof}
Let $\theta_-=\min(s_-,l_-,t_-)$ and $\theta_+=\max(s_+,l_+,t_+)$, and
write $\widehat h_n=D_n(\thetan-\theta_0)$.

\textbf{Information.} The diagonal entries of $\Lambda_0$ lie in
$[\theta_-,\theta_+]$. Hence
$c\theta_-^2\Id_3\preceq J_n^{\log}=\Lambda_0J_n\Lambda_0\preceq
C\theta_+^2\Id_3$, and $J_n^{\log}$ inherits the block structure
\eqref{eq:information-block}, with $J_{n,tt}^{\log}=t_0^2J_{n,tt}\to1/2$.
This is part (i) of \Cref{thm:rbf} on the log scale.

\textbf{Expansion and rates.} A second-order Taylor expansion of $\log$,
coordinatewise on $[\theta_-,\theta_+]$, gives
\begin{equation}\label{eq:log-taylor}
 D_n(\widehat\lambda_n-\lambda_0)=\Lambda_0^{-1}\widehat h_n+R_n,
 \qquad
 \norm{R_n}\le\frac{\norm{D_n^{-1}}_{\op}\norm{\widehat h_n}^2}{2\theta_-^2}.
\end{equation}
Since $\norm{D_n^{-1}}_{\op}=\epsilon_n$ for large $n$,
\eqref{eq:exp-moments} gives $R_n=O_\PP(\epsilon_n)$, together with all
moments. By \eqref{eq:mle-expansion},
\[
 \Lambda_0^{-1}\widehat h_n
 =\Lambda_0^{-1}J_n^{-1}D_n^{-1}\nabla l_n(\theta_0)+o_\PP(1)
 =(J_n^{\log})^{-1}D_n^{-1}\{\Lambda_0\nabla l_n(\theta_0)\}+o_\PP(1),
\]
which is part (ii) on the log scale. The rates follow.

\textbf{Normal limits.} Let $M_n^\top M_n=\calI_n^{\log}(\theta_0)$. Then
$\norm{M_nD_n^{-1}}_{\op}^2=\norm{J_n^{\log}}_{\op}\le C$, and
\[
 M_nD_n^{-1}(J_n^{\log})^{-1}D_n^{-1}=M_n\calI_n^{\log}(\theta_0)^{-1}
 =M_n^{-\top}.
\]
Hence, by the previous step,
\[
 M_n(\widehat\lambda_n-\lambda_0)
 =M_n^{-\top}\Lambda_0\nabla l_n(\theta_0)+o_\PP(1)
 =\widetilde M_n^{-\top}\nabla l_n(\theta_0)+o_\PP(1),
 \qquad\widetilde M_n=M_n\Lambda_0^{-1}.
\]
Since $\widetilde M_n^\top\widetilde M_n
=\Lambda_0^{-1}\calI_n^{\log}(\theta_0)\Lambda_0^{-1}=\calI_n(\theta_0)$,
\Cref{lem:score} gives $M_n(\widehat\lambda_n-\lambda_0)\Rightarrow
N_3(0,\Id_3)$. For the nugget, the third coordinate of
\eqref{eq:log-taylor} reads
$\sqrt n(\log\widehat t_n-\log t_0)=t_0^{-1}\sqrt n(\widehat t_n-t_0)+R_{n,t}$
with $R_{n,t}=O_\PP(n^{-1/2})$, and \eqref{eq:nugget-an} gives the
$N(0,2)$ limit.

\textbf{Tails and moments.} Since $|\log x-\log y|\le|x-y|/\theta_-$ on
$[\theta_-,\theta_+]$, each coordinate satisfies
$|d_{n,j}(\widehat\lambda_{n,j}-\lambda_{0,j})|\le|\widehat h_{n,j}|/\theta_-$.
Hence \eqref{eq:tail-bound} holds for $D_n(\widehat\lambda_n-\lambda_0)$
with $H$ replaced by $\theta_-H$, and \eqref{eq:exp-moments} holds with
the constant in the exponent divided by $\theta_-$. For the limits in
part (v), $R_n\to0$ in probability in \eqref{eq:log-taylor}. Along any
subsequence with $J_n\to J_*$, $\Lambda_0^{-1}\widehat h_n\Rightarrow
\Lambda_0^{-1}J_*^{-1/2}Z$, which has law
$N_3\{0,(\Lambda_0J_*\Lambda_0)^{-1}\}$. The subsequence argument in the
proof of \Cref{thm:rbf}(v) therefore applies with $J_n$ replaced by
$J_n^{\log}$. In particular,
$D_n\E[(\widehat\lambda_n-\lambda_0)(\widehat\lambda_n-\lambda_0)^\top]D_n
=(J_n^{\log})^{-1}+o(1)$. This gives
the stated MSE orders, with
$[(J_n^{\log})^{-1}]_{tt}=t_0^{-2}[J_n^{-1}]_{tt}\to2$.

\textbf{Uniformity.} The additional constants above depend only on
$\theta_-$ and $\theta_+$, so the uniformity statements of
\Cref{rem:uniform} carry over.
\end{proof}

\subsubsection{\texorpdfstring{Proof of \Cref{cor:plugin}}{Proof of the plug-in information corollary}}\label{app:proof-plugin}

\begin{proof}
Write $\widehat h_n=D_n(\thetan-\theta_0)$ and
$\widehat J_n=D_n^{-1}\widehat\calI_nD_n^{-1}$. By \Cref{thm:rbf}(iv),
$\widehat h_n=O_\PP(1)$.

\textbf{Part (i) for the plug-in Fisher information.} For $\theta\in\Theta$,
put $J_n(\theta)=D_n^{-1}\calI_n(\theta)D_n^{-1}$, so that
$J_n(\theta_0)=J_n$. With
\begin{align*}
 B_a(\theta) &= d_{n,a}^{-1}C_n(\theta)^{-1/2}C_{n,a}(\theta)C_n(\theta)^{-1/2}, \quad \text{and}\quad
 B_{ab}(\theta) = d_{n,a}^{-1}d_{n,b}^{-1}
 C_n(\theta)^{-1/2}C_{n,ab}(\theta)C_n(\theta)^{-1/2},
\end{align*}
we have $[J_n(\theta)]_{ab}=\frac12\tr\{B_a(\theta)B_b(\theta)\}$.
Differentiating $\calI_n(\theta)$, using
$\partial_c C_n^{-1}=-C_n^{-1}C_{n,c}C_n^{-1}$, and normalizing gives, in
the coordinates $h=D_n(\theta-\theta_0)$,
\[
 \partial_{h_c}[J_n(\theta)]_{ab}
 =\frac12\tr\bigl\{-B_cB_aB_b-B_aB_cB_b+B_{ac}B_b+B_aB_{bc}\bigr\},
\]
with all matrices evaluated at $\theta$. By \Cref{lem:projection},
uniformly over $\theta\in\Theta$, $\norm{B_a(\theta)}_{\op}\le C\epsilon_n$
and $\norm{B_a(\theta)}_\Frob\le C$ for $a\in\{s,l,t\}$, where for the
nugget $\norm{B_t}_{\op}=O(n^{-1/2})\le C\epsilon_n$. Moreover,
$\norm{B_{ac}(\theta)}_\Frob\le C\epsilon_n$, and $B_{ac}=0$ when $a$ or
$c$ is $t$. Since $|\tr(XYZ)|\le\norm X_{\op}\norm Y_\Frob\norm Z_\Frob$
and $|\tr(XY)|\le\norm X_\Frob\norm Y_\Frob$, every entry of
$\nabla_hJ_n(\theta)$ is bounded by $C\epsilon_n$, uniformly over
$\Theta$. As $\Theta$ is convex, the mean value theorem along the segment
from $\theta_0$ to $\theta$ gives
\begin{equation}\label{eq:J-lipschitz}
 \norm{J_n(\theta)-J_n}_{\op}\le C\epsilon_n\norm{D_n(\theta-\theta_0)},
 \qquad\theta\in\Theta.
\end{equation}
Taking $\theta=\thetan$ proves \eqref{eq:plugin-information} for
$\widehat\calI_n=\calI_n(\thetan)$. This step uses neither interiority
of $\theta_0$ nor any local expansion, and it holds for every
$\theta_0\in\Theta$.

\textbf{Part (i) for the observed information.} Let $L_n$ be as in
\Cref{lem:quadratic}. Then
\[
 D_n^{-1}\{-\nabla^2l_n(\thetan)\}D_n^{-1}=-\nabla_h^2L_n(\widehat h_n).
\]
Fix $K\ge1$. On the event $\{\norm{\widehat h_n}\le K\}$, the mean value
theorem and \eqref{eq:third-local} with $H_n=K$ give
\[
 \norm{\nabla_h^2L_n(\widehat h_n)-\nabla_h^2L_n(0)}_{\op}
 \le CK\sup_{\norm h\le K}\max_{a,b,c}|\partial^3_{h_ah_bh_c}L_n(h)|
 =O_\PP(\epsilon_n),
\]
and \eqref{eq:hessian-local} gives $-\nabla_h^2L_n(0)=J_n+O_\PP(\epsilon_n)$.
Since $\PP\{\norm{\widehat h_n}>K\}$ is arbitrarily small for large $K$,
\eqref{eq:plugin-information} follows for the observed information.

\textbf{Consequences of \eqref{eq:plugin-information}.} Since
$c\Id_3\preceq J_n\preceq C\Id_3$, with probability tending to one
$\frac c2\Id_3\preceq\widehat J_n\preceq2C\Id_3$, so that
$\widehat\calI_n=D_n\widehat J_nD_n$ is positive definite. On this event,
\[
 \norm{\widehat J_n^{-1}-J_n^{-1}}_{\op}
 \le\norm{\widehat J_n^{-1}}_{\op}\norm{\widehat J_n-J_n}_{\op}
 \norm{J_n^{-1}}_{\op}=O_\PP(\epsilon_n).
\] Since
$[\widehat\calI_n^{-1}]_{jj}=d_{n,j}^{-2}[\widehat J_n^{-1}]_{jj}$,
$[\calI_n(\theta_0)^{-1}]_{jj}=d_{n,j}^{-2}[J_n^{-1}]_{jj}$, and
$[J_n^{-1}]_{jj}\ge C^{-1}$, the ratio statement in (i) follows.

\textbf{Part (ii).} Let $\Delta_n=D_n^{-1}\nabla l_n(\theta_0)$. By
\eqref{eq:mle-expansion}, $\widehat h_{n,j}=e_j^\top J_n^{-1}\Delta_n
+o_\PP(1)$. The variable
$V_n=e_j^\top J_n^{-1}\Delta_n/[J_n^{-1}]_{jj}^{1/2}$ has variance one,
since $\Cov(\Delta_n)=J_n$, and it is a linear combination of $\Delta_n$
with coefficient vector $J_n^{-1}e_j/[J_n^{-1}]_{jj}^{1/2}$ of norm at
most $C^{1/2}/c$. By the final assertion of \Cref{lem:score},
$V_n\Rightarrow N(0,1)$. As $[J_n^{-1}]_{jj}\ge C^{-1}$, also
$\widehat h_{n,j}/[J_n^{-1}]_{jj}^{1/2}=V_n+o_\PP(1)$. Finally,
\[
 \frac{\widehat\theta_{n,j}-\theta_{0,j}}{[\widehat\calI_n^{-1}]_{jj}^{1/2}}
 =\frac{\widehat h_{n,j}}{[J_n^{-1}]_{jj}^{1/2}}
 \left(\frac{[J_n^{-1}]_{jj}}{[\widehat J_n^{-1}]_{jj}}\right)^{1/2},
\]
and the last factor is $1+O_\PP(\epsilon_n)$ by part (i). Slutsky's lemma
gives the normal limit. The interval contains $\theta_{0,j}$ exactly
when the absolute value of this ratio is at most $z_{1-\alpha/2}$, which
gives the coverage statement.

\textbf{Part (iii).} For a positive definite matrix $A$, write $\chol(A)$
for its lower-triangular Cholesky factor with positive diagonal. Because
$D_n$ is diagonal with positive entries, $D_n\chol(\widehat J_n)$ is
lower triangular with positive diagonal and
$\{D_n\chol(\widehat J_n)\}\{D_n\chol(\widehat J_n)\}^\top
=\widehat\calI_n$. By uniqueness of the Cholesky factor,
$\widehat L_n=D_n\chol(\widehat J_n)$. The map $\chol$ is smooth on
positive definite matrices, hence Lipschitz on the compact convex set
$\{A:\frac c2\Id_3\preceq A\preceq2C\Id_3\}$, so
$\chol(\widehat J_n)-\chol(J_n)=O_\PP(\epsilon_n)$. Therefore
\[
\begin{aligned}
 \widehat L_n^\top(\thetan-\theta_0)&=\chol(\widehat J_n)^\top\widehat h_n
 =\chol(J_n)^\top\widehat h_n+O_\PP(\epsilon_n)\\
 &=M_n(\thetan-\theta_0)+O_\PP(\epsilon_n),
 \qquad M_n=\chol(J_n)^\top D_n.
\end{aligned}
\]
Since $M_n^\top M_n=D_n\chol(J_n)\chol(J_n)^\top D_n=\calI_n(\theta_0)$,
\Cref{thm:rbf}(iii) gives the first limit. The second follows from
$(\thetan-\theta_0)^\top\widehat\calI_n(\thetan-\theta_0)
=\norm{\widehat L_n^\top(\thetan-\theta_0)}^2$ and the continuous mapping
theorem, and the ellipsoid statement is a restatement of it.

\textbf{Part (iv).} Since $\widehat\Lambda_n-\Lambda_0=\diag(\thetan-\theta_0)$,
$\norm{\widehat\Lambda_n-\Lambda_0}_{\op}
\le\norm{D_n^{-1}}_{\op}\norm{\widehat h_n}=O_\PP(\epsilon_n)$. Because
diagonal matrices commute,
\[
 D_n^{-1}\widehat\calI_n^{\log}D_n^{-1}
 =\widehat\Lambda_n\widehat J_n\widehat\Lambda_n
 =\Lambda_0J_n\Lambda_0+O_\PP(\epsilon_n)
 =J_n^{\log}+O_\PP(\epsilon_n),
\]
which is part (i) on the log scale. By \Cref{cor:log-scale},
$D_n(\widehat\lambda_n-\lambda_0)
=(J_n^{\log})^{-1}D_n^{-1}\{\Lambda_0\nabla l_n(\theta_0)\}+o_\PP(1)$, and
the normal limit of \Cref{thm:rbf}(iii) holds on the log scale. The
proofs of parts (ii) and (iii) therefore apply verbatim, with
$\widehat h_n$ replaced by $D_n(\widehat\lambda_n-\lambda_0)$, $J_n$ by
$J_n^{\log}$, $\Delta_n$ by $\Lambda_0\Delta_n$, and $\calI_n(\theta_0)$ by
$\calI_n^{\log}(\theta_0)$. The displayed interval is the image under
$\exp$ of the log-scale Wald interval for $\lambda_{0,j}$. Finally, the
chain rule gives
\[
 -\nabla_\lambda^2l_n
 =\Lambda\{-\nabla_\theta^2l_n\}\Lambda-\diag(\theta\circ\nabla_\theta l_n),
 \qquad\Lambda=\diag(\theta),
\]
where $\circ$ denotes the entrywise product. At an interior maximizer the
gradient vanishes. Since $\theta_0\in\intt(\Theta)$ and $\thetan$ is
consistent, $\thetan\in\intt(\Theta)$ with probability tending to one.
\end{proof}

\subsubsection{\texorpdfstring{Proof of \Cref{cor:random}}{Proof of the random-design corollary}}\label{app:proof-random}
 
\begin{proof}
\textbf{A coverage event with stretched-exponential failure probability.}
Let $Q$ have side length $L$. For $n\ge1$, let
$k_n=\lfloor n^{1/(4p)}\rfloor$, and partition $Q$ into $k_n^p$ closed
subcubes of side length $L/k_n$. Let $H_n$ be the event that every
subcube contains at least one of $X_1,\ldots,X_n$. As in the proof of
\Cref{lem:random}, each subcube has sampling probability at least
$f_-L^p/k_n^p$, so a union bound gives
\begin{equation}\label{eq:coverage-failure}
 \PP(H_n^c)\le k_n^p\exp\Bigl(-\frac{nf_-L^p}{k_n^p}\Bigr)
 \le n^{1/4}\exp\bigl(-f_-L^pn^{3/4}\bigr),
\end{equation}
because $k_n^p\le n^{1/4}$. On $H_n$, every point of $Q$ lies within the
diameter $\sqrt p\,L/k_n$ of an observation. Since
$\lfloor y\rfloor\ge y/2$ for $y\ge1$, this gives
$h_n(Q)\le2\sqrt p\,Ln^{-1/(4p)}$. Hence, on $H_n$, \eqref{eq:coverage}
holds at $n$ with $\kappa=1/(4p)$ and $C_h=2\sqrt p\,L$. Fix these
constants from now on. The constants in
\Cref{lem:projection,lem:recovery,lem:affinity,lem:tail} depend on the
design only through $\calX$, $Q$, $C_h$, and $\kappa$, and the bounds of
\Cref{lem:tail} hold at every $n\ge n_0$ at which \eqref{eq:coverage} is
satisfied. Therefore every bound of \Cref{thm:rbf}(iv) and (v) holds on
$H_n$ for $n\ge n_0$, with constants that do not depend on the design.
By \eqref{eq:coverage-failure}, $\sum_n\PP(H_n^c)<\infty$, so by the first
Borel--Cantelli lemma, almost surely $H_n$ occurs for all sufficiently
large $n$.
 
Two bounds hold for every design in the bounded set $\calX$. The upper
information bounds of \Cref{lem:information} require only bounded
locations, so $\norm{J_n}_{\op}\le C$. The rescaled parameter set
$D_n(\Theta-\theta_0)$ has diameter at most $C\sqrt n$, so
$\norm{\widehat h_n}\le C\sqrt n$.
 
\textbf{Part (a).} The design is independent of the process and the
errors, so conditionally on $\mathbf X$ the observations follow the model
\eqref{eqn:observation} with deterministic locations. For almost every
realization of $\mathbf X$, $H_n$ occurs for all large $n$, so
\Cref{ass:coverage} holds with the constants fixed above. All results
proved for deterministic designs therefore apply conditionally on
$\mathbf X$, with design-independent constants and design-dependent
thresholds.
 
\textbf{Part (b)(i).} This is part (a) applied to \Cref{thm:rbf}(i), since
$J_n$ is a function of $\mathbf X$ alone.
 
\textbf{Convergence statements in (b)(ii) and (b)(vi).} Let $V_n$ be any
of the statistics in these statements whose limit law does not depend on
the design: the standardized errors in \eqref{eq:main-an},
\eqref{eq:nugget-an}, and \Cref{cor:log-scale,cor:plugin}, and the
quadratic forms with $\chi^2_3$ limits. Let $V$ have the limit law. For
bounded continuous $\varphi$, part (a) gives
$\E[\varphi(V_n)\mid\mathbf X]\to\E\varphi(V)$ almost surely, and dominated
convergence gives $\E\varphi(V_n)\to\E\varphi(V)$. Coverage probabilities
are treated in the same way. For statements of the form $R_n=o_\PP(1)$,
such as \eqref{eq:mle-expansion}, part (a) gives
$\PP(\norm{R_n}>\eta\mid\mathbf X)\to0$ almost surely for every $\eta>0$,
and dominated convergence gives $\PP(\norm{R_n}>\eta)\to0$. For
statements of the form $R_n=O_\PP(r_n)$, such as \eqref{eq:rates} and
\eqref{eq:plugin-information}, let
$q_{n,M}(\mathbf X)=\PP(\norm{R_n}>Mr_n\mid\mathbf X)$. Part (a) gives
$\lim_{M\to\infty}\limsup_nq_{n,M}(\mathbf X)=0$ almost surely. Since
$0\le q_{n,M}\le1$, the reverse Fatou lemma gives
\[
 \lim_{M\to\infty}\limsup_n\E q_{n,M}(\mathbf X)
 \le\lim_{M\to\infty}\E\Bigl\{\limsup_nq_{n,M}(\mathbf X)\Bigr\}=0.
\]
 
\textbf{Part (b)(iii).} For $n\ge n_0$, by \eqref{eq:coverage-failure},
\begin{align*}
    \PP\{\norm{\widehat h_{n,\spat}}\ge H\}
 &\le \E\bigl[\PP\{\norm{\widehat h_{n,\spat}}\ge H\mid\mathbf X\}
 \mathbf 1_{H_n}\bigr]+\PP(H_n^c) 
 \le C\exp\{-c\min(H^2,b_n^{p/2}H)\}+Ce^{-cn^{3/4}},
\end{align*}
and the same argument applies to the nugget coordinate.
 
\textbf{Part (b)(iv).} For $n\ge n_\lambda$, the exponential moment bound
of \Cref{thm:rbf}(v) on $H_n$, together with $\norm{\widehat h_n}\le
C\sqrt n$, gives
\begin{align*}
 \E\exp\{\lambda\norm{\widehat h_n}\}
 &\le\E\bigl[\E\{\exp(\lambda\norm{\widehat h_n})\mid\mathbf X\}
 \mathbf 1_{H_n}\bigr]+e^{\lambda C\sqrt n}\,\PP(H_n^c)
 \le C_\lambda+e^{\lambda C\sqrt n}\,n^{1/4}e^{-f_-L^pn^{3/4}},
\end{align*}
which is bounded in $n$. Now let $g$ be continuous with
$|g(x)|\le Ce^{\lambda\norm x}$. Since
$\norm{M_n(\thetan-\theta_0)}=\norm{J_n^{1/2}\widehat h_n}
\le C\norm{\widehat h_n}\le C\sqrt n$,
\[
 \E\bigl[|g(M_n(\thetan-\theta_0))|\mathbf 1_{H_n^c}\bigr]
 \le Ce^{\lambda C\sqrt n}\,\PP(H_n^c)\to0.
\]
On $H_n$, \Cref{thm:rbf}(v) bounds
$\E[\exp\{\lambda C\norm{\widehat h_n}\}\mid\mathbf X]$ uniformly in the
design, so $\E[g(M_n(\thetan-\theta_0))\mid\mathbf X]\mathbf 1_{H_n}$ is uniformly bounded. This converges to $\E g(Z)$ almost surely, by part (a) and since $\mathbf 1_{H_n}\to 1$ almost surely. Dominated convergence over the design gives $\E g(M_n(\thetan-\theta_0))\to\E g(Z)$.
 
\textbf{Part (b)(v).} The proof of the lower bounds in
\Cref{thm:minimax} uses only bounded locations. For every realized design, it shows that any estimator satisfies
$\PP_{\theta_0}\{|\widetilde\theta_{n,j}-\theta_{0,j}|\ge\delta\mid\mathbf X\}+\PP_{\theta_{1,n}^{(j)}}\{|\widetilde\theta_{n,j}-\theta_{1,n,j}^{(j)}|\ge\delta
\mid\mathbf X\}\ge3/4$, with $\delta=a/(2d_{n,j})$. Taking expectations over the design, the two unconditional error probabilities also sum to at least $3/4$, so the larger is at least $3/8$. This gives \eqref{eq:probability-lower}, and \eqref{eq:risk-lower} follows as in the proof of \Cref{thm:minimax}. For the upper bound, the constants of
\Cref{lem:tail} also do not depend on $\theta_0\in\Theta$. Hence, for $n\ge n_0$ and every $\theta\in\Theta$,
\[
 \E_\theta\{d_{n,j}^2(\widehat\theta_{n,j}-\theta_j)^2\}
 \le\E_\theta\bigl[\E_\theta\{d_{n,j}^2(\widehat\theta_{n,j}-\theta_j)^2
 \mid\mathbf X\}\mathbf 1_{H_n}\bigr]
 +C\,d_{n,j}^2\,\PP(H_n^c)\le C,
\]
because $\Theta$ is bounded, $d_{n,j}^2\le n$, and
$n\,\PP(H_n^c)\to0$ by \eqref{eq:coverage-failure}. The finitely many
$n<n_0$ are covered by enlarging $C$. This proves
\eqref{eq:minimax-upper}.
 
\textbf{Part (b)(vi).} The convergence in distribution and coverage
statements were treated above. For the plug-in Fisher information,
\eqref{eq:J-lipschitz} gives
$\norm{D_n^{-1}\calI_n(\thetan)D_n^{-1}-J_n}_{\op}\le
C\epsilon_n\norm{\widehat h_n}$ for every bounded design, and
$\widehat h_n=O_\PP(1)$ unconditionally by part (b)(ii). For the observed
information, \eqref{eq:plugin-information} holds conditionally by part
(a), and the $O_\PP$ argument above gives the unconditional version.
\end{proof}

\section{Proofs of the lemmas in Appendix \ref{app:statements}}\label{app:lemma-proofs}

\subsection{\texorpdfstring{Proof of \Cref{lem:affinity-monotone}}{Proof of the affinity monotonicity lemma}}\label{app:affinity-monotone}

\begin{proof}
Let $\rho=(P+Q)/2$, a probability law dominating $P$ and $Q$, and let
$p=dP/d\rho$ and $q=dQ/d\rho$. The affinity does not depend on the
dominating measure, so $\Aff(P,Q)=\E_\rho\sqrt{pq}$. Put
$\rho_T=\rho\circ T^{-1}$, which dominates $P\circ T^{-1}$ and
$Q\circ T^{-1}$. For $F\in\mathcal F'$,
\[
 P\{T^{-1}(F)\}=\E_\rho\bigl[p\,\mathbf 1\{T\in F\}\bigr]
 =\E_\rho\bigl[\E_\rho(p\mid T)\,\mathbf 1\{T\in F\}\bigr],
\]
so the density of $P\circ T^{-1}$ with respect to $\rho_T$, evaluated at
$T$, is $\E_\rho(p\mid T)$, $\rho$-almost surely. The same holds for $Q$.
Hence
\[
 \Aff(P\circ T^{-1},Q\circ T^{-1})
 =\E_\rho\sqrt{\E_\rho(p\mid T)\,\E_\rho(q\mid T)}.
\]
The conditional Cauchy--Schwarz inequality gives
$\E_\rho(\sqrt{pq}\mid T)\le\sqrt{\E_\rho(p\mid T)\,\E_\rho(q\mid T)}$
almost surely. Taking expectations yields
$\Aff(P,Q)=\E_\rho\sqrt{pq}\le\Aff(P\circ T^{-1},Q\circ T^{-1})$.
The Gaussian statement is the case $T(y)=Hy$, since $HY$ is
$N_r(0,HAH^\top)$ or $N_r(0,HBH^\top)$, and these covariances are positive
definite because $H$ has full row rank.
\end{proof}

\subsection{\texorpdfstring{Proof of \Cref{lem:qf-moments}}{Proof of the quadratic-form moment lemma}}\label{app:qf-moments}

\begin{proof}
If $B=0$ there is nothing to prove. Diagonalizing $B$, the variable
$Q=X^\top BX-\tr B$ has the law of $\sum_j\lambda_j(\eta_j^2-1)$ for
independent standard normal $\eta_j$, where
$\sum_j\lambda_j^2=\norm{B}_\Frob^2$. For $|v|\le1/2$,
\[
 -\tfrac12\log(1-v)-\tfrac v2=\sum_{r\ge2}\frac{v^r}{2r}
 \le\frac{v^2}{4(1-|v|)}\le\frac{v^2}2 .
\]
Hence, for $|s|\le1/(4\norm B_\Frob)$, so that $|2s\lambda_j|\le1/2$ for
every $j$,
\[
 \log\E e^{sQ}
 =\sum_j\Bigl\{-\frac12\log(1-2s\lambda_j)-s\lambda_j\Bigr\}
 \le2s^2\norm B_\Frob^2\le\frac18.
\]
Applying this with $s=\pm1/(4\norm B_\Frob)$ gives
$\E\exp\{|Q|/(4\norm B_\Frob)\}\le2e^{1/8}$. Since $x^q\le C_qe^x$ for
$x\ge0$, it follows that $\E|Q|^q\le2e^{1/8}C_q(4\norm B_\Frob)^q$.
\end{proof}

\subsection{\texorpdfstring{Proof of \Cref{lem:fraction}}{Proof of the Paley-Zygmund-type bound}}\label{app:fraction}

\begin{proof}
Splitting the second moment according to whether $|X|<\lambda\sigma$,
\[
 \sigma^2\le\E X^2
 =\E\bigl[X^2\mathbf 1\{|X|<\lambda\sigma\}\bigr]
 +\E\bigl[X^2\mathbf 1\{|X|\ge\lambda\sigma\}\bigr]
 \le\lambda^2\sigma^2+M^2\,\PP(|X|\ge\lambda\sigma).
\]
Rearranging gives the first claim. For the second, $\PP(|f(\iota)|\ge\lambda\sigma)$
is the number of $\iota\in\mathcal A$ with $|f(\iota)|\ge\lambda\sigma$,
divided by $|\mathcal A|$.
\end{proof}

\subsection{\texorpdfstring{Proof of \Cref{lem:kolmogorov}}{Proof of the Kolmogorov criterion}}\label{app:kolmogorov}

\begin{proof}
Inequality (1.10) of \citet{talagrand2021} shows that there is a
countable dense set $G\subseteq[0,1]^k$ such that, for all
$j\in\N_0$,
\[
 \Bigl\|\sup_{x,y\in G,\,|x-y|\le2^{-j}}|\xi(x)-\xi(y)|\Bigr\|_{L^m}
 \le C_{k,m,\alpha}L2^{-j(\alpha-k)/m}.
\]
Letting $j\to\infty$ shows that $\xi|_G$ is almost surely uniformly
continuous. Its continuous extension to $[0,1]^k$ is a modification of
$\xi$, because $\xi$ is continuous in $L^m$. For $h\in(0,1)$, choose
$j\in\N_0$ with $2^{-j-1}\le h<2^{-j}$. If $|x-y|\le h$ and
$x_r\to x$, $y_r\to y$ along $G$, then $|x_r-y_r|<2^{-j}$ eventually, so
the supremum over $|x-y|\le h$ is at most the supremum over $G$ at scale
$2^{-j}$. Since $2^{-j}\le2h$, this proves the claim for $h<1$. The case
$h=1$ follows from $h=1/2$ by the triangle inequality through midpoints.
\end{proof}

\subsection{\texorpdfstring{Proof of \Cref{lem:convex-sup}}{Proof of the convex-set supremum lemma}}\label{app:convex-sup}

\begin{proof}
If $M=0$, then $\xi=0$ almost surely on a countable dense subset of
$\Gamma$, and hence everywhere by continuity. Assume $M>0$. Fix
$a\in\Gamma$, let $\pi$ be the metric projection of $\R^k$ onto $\Gamma$,
which is $1$-Lipschitz, and put
$\eta(x)=\xi(\pi(a-\delta\mathbf 1+2\delta x))$ for $x\in[0,1]^k$, where
$\mathbf 1=(1,\ldots,1)$. Since $\Gamma\subseteq a+[-\delta,\delta]^k$ and
$\pi$ is the identity on $\Gamma$,
$\sup_{[0,1]^k}|\eta|=\sup_\Gamma|\xi|$. Moreover, $\eta$ has continuous
paths, $\sup_x\norm{\eta(x)}_{L^m}\le M$, and
$\norm{\eta(x)-\eta(y)}_{L^m}\le2\delta L|x-y|$. By
\Cref{lem:kolmogorov} with $\alpha=m>k$, applied to $\eta$, whose
continuous modification is indistinguishable from $\eta$,
\[
 \Bigl\|\sup_{|x-y|\le h}|\eta(x)-\eta(y)|\Bigr\|_{L^m}
 \le C\delta Lh^{1-k/m},\qquad h\in(0,1].
\]

Let $h\in(0,1]$ and cover $[0,1]^k$ by at most $\lceil1/h\rceil^k\le(2/h)^k$
closed cubes $B$ of side $h$, choosing a point $x_B$ in each. Every
$x\in[0,1]^k$ lies within $\sqrt k\,h$ of some $x_B$, so
\[
 \sup_x|\eta(x)|\le\max_B|\eta(x_B)|
 +\sup_{|x-y|\le\sqrt kh}|\eta(x)-\eta(y)|.
\]
Since $\norm{\max_B|X_B|}_{L^m}\le(\sum_B\norm{X_B}_{L^m}^m)^{1/m}$, the
first term has $L^m$ norm at most $(2/h)^{k/m}M$. Dividing the segment
from $x$ to $y$ into $\lceil\sqrt k\rceil$ pieces of length at most $h$,
the second term has $L^m$ norm at most $C\delta Lh^{1-k/m}$. Thus
\[
 \norm{\sup_\Gamma|\xi|}_{L^m}\le C\{Mh^{-k/m}+\delta Lh^{1-k/m}\},
 \qquad0<h\le1.
\]
If $M\ge\delta L$, take $h=1$ to obtain the bound $2CM$. Otherwise take
$h=M/(\delta L)$ to obtain $2CM^{1-k/m}(\delta L)^{k/m}$. Raising to the
power $m$ proves the claim.
\end{proof}

\subsection{\texorpdfstring{Proof of \Cref{prop:cover}}{Proof of the cover maximal inequality}}\label{app:cover}

\begin{proof}
Each $\sup_{\Gamma_j}|\xi|$ is measurable, being a supremum over a
countable dense subset of $\Gamma_j$. Since
$\sup_{u\in\bigcup_j\Gamma_j}|\xi(u)|^m\le\sum_j\sup_{u\in\Gamma_j}|\xi(u)|^m$,
monotone convergence and \Cref{lem:convex-sup} prove the claim.
\end{proof}

\subsection{\texorpdfstring{Proof of \Cref{cor:envelope}}{Proof of the envelope corollary}}\label{app:envelope}

\begin{proof}
We use two elementary properties of $\psi=\psi_\gamma$. First, $\psi$ is
nondecreasing, and $\psi(r/2)\ge\psi(r)/4$. Hence, for $c\ge0$,
\begin{equation}\label{eq:psi-shift}
 \psi\bigl((r-c)_+\bigr)\ge\psi(r)/4-c^2,\qquad r\ge0:
\end{equation}
if $r\ge2c$ this follows from $(r-c)_+\ge r/2$, and if $r<2c$ the right
side is negative. Second, $\psi_\gamma\ge\psi_1$ because $\gamma\ge1$.

Let $A_H=\{u\in U:|u_1|\ge H_1,|u_2|\ge H_2\}$, which we may assume
nonempty. For $z=(z_1,z_2)\in\mathbb Z^{k_1}\times\mathbb Z^{k_2}$ put
$\Gamma_z=U\cap(z+[0,1]^k)$, and let $\mathcal Z$ be the set of $z$ with
$\Gamma_z\cap A_H\ne\emptyset$. Each $\Gamma_z$ with $z\in\mathcal Z$ is a
nonempty product of compact intervals, hence compact and convex, of
diameter at most $\sqrt k$, and these sets cover $A_H$. If
$u\in\Gamma_z$, then $|u_i-z_i|\le\sqrt{k_i}$, so
$|u_i|\ge\tau_i(z_i):=(|z_i|-\sqrt{k_i})_+$. If moreover $z\in\mathcal Z$,
then $|z_i|\ge H_i-\sqrt{k_i}$. Let
$E_z=\exp[-a\sum_i\psi_{\gamma_i}(\tau_i(z_i))]\le1$. By
\eqref{eq:envelope-moment}, $\sup_{\Gamma_z}\norm{\xi}_{L^m}^m\le AE_z$,
and $E_z\le E_z^{1-k/m}$. \Cref{prop:cover}, applied with $L_z=B$ and
$\delta_z=\sqrt k$, gives
\[
 \E\sup_{A_H}|\xi|^m
 \le C\sum_{z\in\mathcal Z}
 \bigl\{AE_z+B^kA^{1-k/m}E_z^{1-k/m}\bigr\}
 \le C\bigl(A+B^kA^{1-k/m}\bigr)\prod_{i=1}^2S_i,
\]
where, with $a'=a(m-k)/m$,
\[
 S_i=\sum_{z_i\in\mathbb Z^{k_i}:\,|z_i|\ge H_i-\sqrt{k_i}}
 \exp\bigl[-a'\psi_{\gamma_i}(\tau_i(z_i))\bigr].
\]

Fix $i$ and suppress it from the notation, writing $k'=k_i$ and
$\psi=\psi_{\gamma_i}$. The number of $z\in\mathbb Z^{k'}$ with
$j\le|z|<j+1$ is at most $C(1+j)^{k'-1}$, and for such $z$,
\eqref{eq:psi-shift} gives $\psi(\tau(z))\ge\psi(j)/4-k'$. Only integers
$j\ge J:=(H-\sqrt{k'}-1)_+$ occur. Furthermore,
\begin{align*}
 S_i &\le Ce^{a'k'}\sum_{j\ge J}(1+j)^{k'-1}e^{-a'\psi(j)/4}
 \le Ce^{a'k'}e^{-a'\psi(J)/8}
 \sum_{j\ge0}(1+j)^{k'-1}e^{-a'\min(j^2,j)/8}
 \le Ce^{-a'\psi(J)/8}\;,
\end{align*}
where we have used that facts that $\psi$ is nondecreasing and $\psi\ge\psi_1$. By \eqref{eq:psi-shift} again, $\psi(J)\ge\psi(H)/4-(\sqrt{k'}+1)^2$, so $S_i\le C\exp\{-a'\psi_{\gamma_i}(H_i)/32\}$. Since $a''=a'/32$, this proves \eqref{eq:envelope-bound}. No constant depends on $\gamma_i$ or on $U$.
\end{proof}

\subsection{\texorpdfstring{Proof of \Cref{lem:projection}}{Proof of the common projection lemma}}\label{app:projection}

\begin{proof}
\textbf{A common projection.}
Since $\calX$ is bounded and $l\ge l_->0$, the quantity
$u_l(x,y)=\norm{x-y}^2/(2l^2)$ is uniformly bounded. Set
\[
 T_{M,l}(x,y)=\sum_{j=0}^{M}\frac{(-1)^j}{j!}u_l(x,y)^j.
\]
For each fixed $k\le3$, differentiating $l^{-2j}$ introduces a
polynomial factor of degree $k$ in $j$, with bounded additional
powers of $l^{-1}$. Therefore, for a fixed $B<\infty$,
\begin{equation}\label{eq:taylor-derivative-tail}
 \sup_{x,y\in\calX,\,l\in[l_-,l_+]}
 |\partial_l^k(R_l(x,y)-T_{M,l}(x,y))|
 \le C_k\sum_{j>M}\frac{(j+1)^kB^j}{j!}
 \le e^{-M\log M+C_kM}.
\end{equation}
The last inequality is the case $\gamma=1$ of the following factorial
tail bound. Since $e^j\ge j^j/j!$, Stirling's lower bound
\begin{equation}\label{eq:stirling}
 \log j!\ge j\log j-j,\qquad j\ge1,
\end{equation}
holds, together with the trivial upper bound $\log j!\le j\log j$. Fix
$k\ge0$, $B>0$, and $\gamma\in(0,1]$, and put $a_j=(j+1)^kB^j/(j!)^\gamma$.
Then
\begin{equation}\label{eq:factorial-tail}
 \sum_{j>M}a_j\le e^{-\gamma M\log M+CM},\qquad M\ge1,
\end{equation}
where $C$ depends only on $k$, $B$, and $\gamma$. Indeed,
$a_{j+1}/a_j=\{(j+2)/(j+1)\}^kB(j+1)^{-\gamma}\le2^kB(j+1)^{-\gamma}$, which
is at most $1/2$ once $j+1\ge(2^{k+1}B)^{1/\gamma}$. For $M$ beyond this
threshold, the tail is at most $2a_{M+1}$. By \eqref{eq:stirling},
$\log(M+1)!\ge(M+1)\log(M+1)-(M+1)\ge M\log M-M-1$, so
%{\footnotesize
\begin{align*}
     \log(2a_{M+1}) &\le\log2+k\log(M+2)+(M+1)\log^+B-\gamma(M\log M-M-1) \\ 
     &\le -\gamma M\log M+CM.
\end{align*}
%}
For the finitely many smaller $M$, the tail is at most
$\sum_ja_j<\infty$, which is absorbed by enlarging $C$.

Let $P_n$ project onto evaluations of polynomials of total degree at most
$2M$. Each column and row of the matrix $T_{M,l}$ lies in that space,
as do its first three lengthscale derivatives. Thus
$P_n(\partial_l^kT_{M,l})P_n=\partial_l^kT_{M,l}$ and
\[
 \norm{\partial_l^kR_n-P_n(\partial_l^kR_n)P_n}_{\op}
 \le 2n e^{-M\log M+C_kM}.
\]
Choose $M=\lceil c_Ab_n\rceil$ with a fixed $c_A>A+1$. Then
$\log M=(1+o(1))\log\log n$, so $M\log M=(c_A+o(1))\log n$, while
$C_kM=o(\log n)$. Hence the last bound is $n^{1-c_A+o(1)}\le n^{-A}$ for
large $n$. The rank is at most
$\binom{2M+p}{p}=O(b_n^p)$. Derivatives in $s$ cause only constant
factors; derivatives of $S_n$ in $t$ vanish. This proves
\eqref{eq:projection-approx}.

Both exact and reduced covariance matrices are bounded below by
$t_-\Id_n$. Put $\Omega_n=C_n-\widetilde C_n=S_n-P_nS_nP_n$, so that
$\norm{\Omega_n}_{\op}\le n^{-A}$ uniformly over $\Theta$ by
\eqref{eq:projection-approx}. The resolvent identity
$C_n^{-1}-\widetilde C_n^{-1}=-C_n^{-1}\Omega_n\widetilde C_n^{-1}$ gives
\[
 \norm{C_n^{-1}-\widetilde C_n^{-1}}_{\op}\le t_-^{-2}n^{-A}.
\]
Every raw covariance derivative through order three has operator norm
$O(n)$ uniformly over $\Theta$. Formulas for likelihood derivatives
through order three contain finitely many traces and quadratic forms
with at most three covariance derivative factors. Replacing factors one
at a time, using \eqref{eq:projection-approx}, bounds each difference by
$Cn^{4-A}(1+\norm{Y_n}^2/n)$. For the zeroth-order term,
\[
 l_n(\theta)-\widetilde l_n(\theta)
 =-\frac12\bigl\{\log\det C_n-\log\det\widetilde C_n\bigr\}
 -\frac12Y_n^\top\bigl(C_n^{-1}-\widetilde C_n^{-1}\bigr)Y_n.
\]
The quadratic part is at most $t_-^{-2}n^{-A}\norm{Y_n}^2$. For the log
determinant, let $C_n(r)=\widetilde C_n+r\Omega_n=(1-r)\widetilde C_n+rC_n$
for $r\in[0,1]$. As a convex combination of two matrices bounded below
by $t_-\Id_n$, each $C_n(r)$ satisfies $C_n(r)\succeq t_-\Id_n$. Jacobi's
formula gives $\frac d{dr}\log\det C_n(r)=\tr\{C_n(r)^{-1}\Omega_n\}$, and
$|\tr(AB)|\le n\norm A_{\op}\norm B_{\op}$ for $n\times n$ matrices.
Hence
\[
 \bigl|\log\det C_n-\log\det\widetilde C_n\bigr|
 \le\int_0^1\bigl|\tr\{C_n(r)^{-1}\Omega_n\}\bigr|\,dr
 \le t_-^{-1}n^{1-A}.
\]
Thus all differences in \eqref{eq:likelihood-approx} are at most
$Cn^{4-A}(1+\norm{Y_n}^2/n)$ uniformly over $\Theta$. Since
$\E_{\theta_0}\norm{Y_n}^2=\tr C_n(\theta_0)=n(s_0+t_0)$,
\eqref{eq:likelihood-approx} follows.

\textbf{Feature differentiation.}
Define
\[
 \phi_{\alpha,l}(x)
 =e^{-\norm{x}^2/(2l^2)}
   \frac{x^\alpha}{l^{|\alpha|}\sqrt{\alpha!}},
 \qquad R_l(x,y)=\sum_{\alpha\in\N_0^p}
                 \phi_{\alpha,l}(x)\phi_{\alpha,l}(y).
\]
This feature expansion of the kernel is derived in
\eqref{eq:kernel-expansion} in the proof of \Cref{lem:recovery}. The
series and its fixed-order lengthscale derivatives converge absolutely
and uniformly on bounded sets. Direct differentiation gives
\begin{equation}\label{eq:feature-raising}
 \partial_l\phi_{\alpha,l}
 =l^{-1}\left[
 -|\alpha|\phi_{\alpha,l}
 +\sum_{j=1}^p\sqrt{(\alpha_j+1)(\alpha_j+2)}
                \phi_{\alpha+2e_j,l}\right].
\end{equation}
For $M\ge0$, let $I_M=\{\alpha\in\N_0^p:|\alpha|\le M\}$, which has
$q_M=\binom{M+p}{p}$ elements, and let $B_M(l)\in\R^{n\times q_M}$ be the
observation matrix of the features of degree at most $M$:
\[
 [B_M(l)]_{i\alpha}=\phi_{\alpha,l}(x_{n,i}),
 \qquad 1\le i\le n,\ \alpha\in I_M.
\]
Thus the column of $B_M(l)$ indexed by $\alpha$ is the feature
$\phi_{\alpha,l}$ evaluated at the $n$ sampling points, and
\[
 [B_M(l)B_M(l)^\top]_{ij}
 =\sum_{\alpha\in I_M}\phi_{\alpha,l}(x_{n,i})\phi_{\alpha,l}(x_{n,j})
\]
is the degree-$M$ truncation of $R_n(l)$. For $M\le M'$, let
$E_{M,M'}\in\R^{q_{M'}\times q_M}$ be the coordinate embedding,
$E_{M,M'}e_\alpha=e_\alpha$ for $\alpha\in I_M$. It has orthonormal
columns, so $\norm{E_{M,M'}}_{\op}=1$, and $B_M=B_{M'}E_{M,M'}$ because
the columns of $B_M$ are among those of $B_{M'}$.

Applying \eqref{eq:feature-raising} column by column gives
$\partial_lB_M(l)=B_{M+2}(l)V_M(l)$, where
$V_M(l)\in\R^{q_{M+2}\times q_M}$ has columns
\[
 V_M(l)e_\alpha
 =l^{-1}\Bigl[-|\alpha|e_\alpha
 +\sum_{j=1}^p\sqrt{(\alpha_j+1)(\alpha_j+2)}\,e_{\alpha+2e_j}\Bigr],
 \qquad\alpha\in I_M.
\]
In particular, $V_M(l)=l^{-1}V_M^\circ$ with $V_M^\circ$ independent of
$l$. Since $\sqrt{(\alpha_j+1)(\alpha_j+2)}\le|\alpha|+2\le M+2$, every
column has norm at most $(p+1)(M+2)/l$. A bound on column norms does not
by itself bound an operator norm, but it does once the matrix is split
into pieces with orthogonal columns. Write
\begin{align*}
 & lV_M(l)=-E_{M,M+2}N_M+\sum_{j=1}^pW_{M,j},\quad N_Me_\alpha=|\alpha|e_\alpha,\quad\text{and}\quad W_{M,j}e_\alpha=\sqrt{(\alpha_j+1)(\alpha_j+2)}\,e_{\alpha+2e_j}.
\end{align*}
In each of these $p+1$ matrices, distinct columns are multiples of
distinct coordinate vectors, because the maps $\alpha\mapsto\alpha$ and
$\alpha\mapsto\alpha+2e_j$ are injective. The columns are therefore
orthogonal. For a matrix $W$ with orthogonal columns,
$\norm{Wx}^2=\sum_\alpha x_\alpha^2\norm{We_\alpha}^2$, so its operator
norm equals its largest column norm, here at most $M+2$. Hence
\begin{equation}\label{eq:raising-norm}
 \norm{V_M(l)}_{\op}\le l_-^{-1}(p+1)(M+2)\le C(M+1).
\end{equation}
Equivalently, each row of $V_M(l)$ also has at most $p+1$ nonzero entries, so the Schur test applies. We now show that, for $k\le3$,
\begin{equation}\label{eq:truncated-derivative}
 \partial_l^k\{B_M(l)B_M(l)^\top\}
 =B_{M+2k}(l)H_{M,k}(l)B_{M+2k}(l)^\top,
 \quad \norm{H_{M,k}(l)}_{\op}\le C_k(M+1)^k,
\end{equation}
where $H_{M,k}(l)=l^{-k}H_{M,k}^\circ$ is a symmetric
$q_{M+2k}\times q_{M+2k}$ matrix and $H_{M,k}^\circ$ does not depend on
$l$. For $k=0$, take $H_{M,0}=\Id_{q_M}$. Suppose that
\eqref{eq:truncated-derivative} holds for some $k$, and abbreviate
$M_k=M+2k$ and $E=E_{M_k,M_k+2}$. Since $B_{M_k}=B_{M_k+2}E$ and
$\partial_lB_{M_k}=B_{M_k+2}V_{M_k}$, the product rule gives
\[
 \partial_l^{k+1}\{B_MB_M^\top\}
 =B_{M_k+2}\bigl\{V_{M_k}H_{M,k}E^\top+E(\partial_lH_{M,k})E^\top
 +EH_{M,k}V_{M_k}^\top\bigr\}B_{M_k+2}^\top.
\]
The matrix in braces is $H_{M,k+1}$. It is symmetric, and it has the form
$l^{-(k+1)}H_{M,k+1}^\circ$ because $V_{M_k}=l^{-1}V_{M_k}^\circ$ and
$\partial_lH_{M,k}=-kl^{-1}H_{M,k}$. Since $\norm E_{\op}=1$, by
\eqref{eq:raising-norm},
\[
 \norm{H_{M,k+1}}_{\op}
 \le\bigl\{2C(M+2k+1)+kl_-^{-1}\bigr\}\norm{H_{M,k}}_{\op}
 \le C_{k+1}(M+1)^{k+1}.
\]
This is how all smaller feature spaces are embedded in the common space
through degree $M+2k$: the embeddings $E$ carry every intermediate term
into $\R^{q_{M+2k}}$, so that the single feature matrix $B_{M+2k}$
appears on both sides.

Positive feature truncation gives
$sB_{M+2k}B_{M+2k}^\top\preceq C_n$ and therefore
\begin{equation}\label{eq:feature-contraction}
 \norm{\sqrt{s}\,C_n^{-1/2}B_{M+2k}}_{\op}\le1.
\end{equation}
The whitened truncated derivative consequently has operator norm
$O(M^k)$ and rank $O(M^p)$, so its Frobenius norm is $O(M^{p/2+k})$.

To control the derivative tail, collect terms with $|\alpha|=j$.
Absolute summation of the monomials bounds these terms by $B^j/j!$
on the fixed bounded set, and a $k$th lengthscale derivative adds at
most $C_k(j+1)^k$. Hence \eqref{eq:factorial-tail} with $\gamma=1$ gives
the tail bound in \eqref{eq:taylor-derivative-tail}. For $M$ a sufficiently large multiple
of $b_n$, its operator norm is an arbitrarily small power of $n^{-1}$,
and its Frobenius norm is at most $\sqrt n$ times that bound.
Whitening multiplies these errors by at most $t_-^{-1}$.
This proves \eqref{eq:raw-operator} and \eqref{eq:raw-frobenius} for
lengthscale derivatives. One $s$-derivative removes the factor $s$;
two $s$-derivatives give zero.

For a compression $A_\theta=U^\top C_nU$, the derivatives are
$U^\T(\partial_l^kC_n)U$, so by \eqref{eq:truncated-derivative} the
truncated part becomes
$s(U^\top B_{M+2k})H_{M,k}(U^\top B_{M+2k})^\T$. Since
$sB_{M+2k}B_{M+2k}^\top\preceq C_n$, also
$sU^\top B_{M+2k}B_{M+2k}^\top U\preceq U^\top C_nU=A_\theta$, which gives the
analogue $\norm{\sqrt s\,A_\theta^{-1/2}U^\top B_{M+2k}}_{\op}\le1$ of
\eqref{eq:feature-contraction}. The rank is still at most $q_{M+2k}$.
The compressed tail $U^\top\{\partial_l^k(R_n-B_MB_M^\T)\}U$ has operator
norm at most that of the uncompressed tail and Frobenius norm at most
$\sqrt r\le\sqrt n$ times it, and $A_\theta\succeq t_-\Id_r$. Hence every
estimate above holds verbatim for $A_\theta$, with the same constants.

Finally,
\[
 \partial_{h_s}^a\partial_{h_l}^c
 =b_n^{-(a+c)p/2-c}\partial_s^a\partial_l^c.
\]
Combining this identity with the preceding estimates gives
$O(\epsilon_n^{a+c})$ in operator norm and
$O(\epsilon_n^{a+c-1})$ in Frobenius norm. Since
$C_{n,t}=\Id_n$, its normalized relative derivative is
$n^{-1/2}C_n^{-1}$, which has operator norm at most
$t_-^{-1}n^{-1/2}$ and Frobenius norm at most $t_-^{-1}$.
All higher covariance derivatives involving $t$ vanish.
\end{proof}

\subsection{\texorpdfstring{Proof of \Cref{lem:recovery}}{Proof of the coefficient recovery lemma}}\label{app:recovery}

\begin{proof}
\textbf{Kernel expansion.}
For $x,y\in\R^p$, $\norm{x-y}^2=\norm x^2+\norm y^2-2\langle x,y\rangle$.
By the multinomial theorem,
$\langle x,y\rangle^j=\sum_{|\alpha|=j}\frac{j!}{\alpha!}x^\alpha y^\alpha$,
and therefore
\[
 \exp\Bigl(\frac{\langle x,y\rangle}{l^2}\Bigr)
 =\sum_{j=0}^\infty\frac{\langle x,y\rangle^j}{l^{2j}j!}
 =\sum_{j=0}^\infty\sum_{|\alpha|=j}\frac{x^\alpha y^\alpha}{l^{2j}\alpha!}
 =\sum_{\alpha\in\N_0^p}\frac{x^\alpha y^\alpha}{l^{2|\alpha|}\alpha!}.
\]
The rearrangement is justified because the same series with every term
replaced by its absolute value sums to
$\exp(\sum_i|x_iy_i|/l^2)<\infty$. Hence
\begin{equation}\label{eq:kernel-expansion}
 k_\theta(x,y)
 =s\,e^{-\norm x^2/(2l^2)}e^{-\norm y^2/(2l^2)}e^{\langle x,y\rangle/l^2}
 =s\sum_{\alpha\in\N_0^p}\phi_{\alpha,l}(x)\phi_{\alpha,l}(y),
\end{equation}
where
\[
 \phi_{\alpha,l}(x)=e^{-\norm x^2/(2l^2)}
 \frac{x^\alpha}{l^{|\alpha|}\sqrt{\alpha!}}.
\]
In particular,
$\sum_\alpha\phi_{\alpha,l}(x)^2=k_\theta(x,x)/s=1$.

\textbf{An analytic representation.}
Let $\xi_\alpha$, $\alpha\in\N_0^p$, be independent standard normal
variables, and define
\begin{equation}\label{eq:analytic-field}
 F(x)=\sqrt{s}\sum_{\alpha\in\N_0^p}\xi_\alpha\phi_{\alpha,l}(x)
 =\sqrt{s}\,e^{-\norm{x}^2/(2l^2)}
 \sum_{\alpha\in\N_0^p}
 \xi_\alpha\frac{x^\alpha}{l^{|\alpha|}\sqrt{\alpha!}}.
\end{equation}
For $\rho>0$ and $x\in[-\rho,\rho]^p$, $|\phi_{\alpha,l}(x)|\le
(\rho/l_-)^{|\alpha|}/\sqrt{\alpha!}$, so
\[
 \E\sum_\alpha|\xi_\alpha|\sup_{x\in[-\rho,\rho]^p}|\phi_{\alpha,l}(x)|
 \le\sum_{\alpha}\frac{(\rho/l_-)^{|\alpha|}}{\sqrt{\alpha!}}
 =\Bigl(\sum_{a=0}^\infty\frac{(\rho/l_-)^a}{\sqrt{a!}}\Bigr)^p<\infty.
\]
Hence, almost surely, the series in \eqref{eq:analytic-field} converges
absolutely and uniformly on every box, and the power series in it
defines an entire function. For fixed $x,y$, the partial sums also
converge in $L^2$ because $\sum_\alpha\phi_{\alpha,l}(x)^2=1$, and
independence of the $\xi_\alpha$ together with \eqref{eq:kernel-expansion}
gives
\[
 \Cov\{F(x),F(y)\}=s\sum_\alpha\phi_{\alpha,l}(x)\phi_{\alpha,l}(y)
 =k_\theta(x,y).
\]
Thus \eqref{eq:analytic-field} is an analytic version of the centered
Gaussian process with covariance \eqref{eq:rbf-intro}.

\textbf{Taylor coefficients and their tails.}
Let
\[
 H(x)=e^{\norm x^2/(2l^2)}F(x)
 =\sqrt s\sum_{\gamma\in\N_0^p}\xi_\gamma
 \frac{x^\gamma}{l^{|\gamma|}\sqrt{\gamma!}},
 \qquad a_l=-\frac1{2l^2},
\]
so that $F(x)=e^{a_l\norm x^2}H(x)$. Since
$e^{a_l\norm x^2}=\prod_je^{a_lx_j^2}=\sum_{r\in\N_0^p}a_l^{|r|}x^{2r}/r!$,
the Cauchy product of these absolutely convergent series gives
\[
 F(x)=\sqrt s\sum_{r\in\N_0^p}\sum_{\gamma\in\N_0^p}
 \frac{a_l^{|r|}}{r!\,l^{|\gamma|}\sqrt{\gamma!}}\,\xi_\gamma\,x^{2r+\gamma}.
\]
Collecting the terms with $2r+\gamma=\alpha$, the Taylor coefficients
$f_\alpha=[x^\alpha]F(x)$ are
\begin{equation}\label{eq:coefficient-explicit}
 f_\alpha=\sqrt{s}
 \sum_{2r\le\alpha}
 \frac{a_l^{|r|}}
 {r!\,l^{|\alpha|-2|r|}\sqrt{(\alpha-2r)!}}
 \xi_{\alpha-2r}.
\end{equation}
The summands involve distinct independent normal variables. For each
coordinate, the terms in
\[
 \sum_{r=0}^{\lfloor q/2\rfloor}
 \frac{q!}{r!r!(q-2r)!}
 |a_l|^{2r}(l^{-2})^{q-2r}
\]
form a subset of the nonnegative terms in the multinomial expansion
of $(2|a_l|+l^{-2})^q=(2l^{-2})^q$. Therefore
\[
 \alpha!\Var_\theta(f_\alpha)
 \le s(2l^{-2})^{|\alpha|},
 \qquad
 \norm{f_\alpha}_{L^2(P_\theta)}
 \le \frac{C^{|\alpha|+1}}{\sqrt{\alpha!}}.
\]
Let $T_KF(x)=\sum_{|\alpha|\le K}f_\alpha x^\alpha$, and write
$|x|^\alpha=\prod_i|x_i|^{\alpha_i}$. The number of $\alpha\in\N_0^p$ with
$|\alpha|=j$ is $\binom{j+p-1}{p-1}$. Hence the Cauchy--Schwarz
inequality over these indices and the multinomial theorem give
\begin{align}
 \Bigl(\sum_{|\alpha|=j}\frac{|x|^\alpha}{\sqrt{\alpha!}}\Bigr)^2
 &\le\binom{j+p-1}{p-1}\sum_{|\alpha|=j}\frac{x^{2\alpha}}{\alpha!}
 =\binom{j+p-1}{p-1}\frac1{j!}\sum_{|\alpha|=j}\frac{j!}{\alpha!}
 \prod_{i=1}^p(x_i^2)^{\alpha_i}\notag\\
 &=\binom{j+p-1}{p-1}\frac{\norm{x}^{2j}}{j!}.
 \label{eq:degree-sum-bound}
\end{align}
Since $|F(x)-T_KF(x)|\le\sum_{j>K}\sum_{|\alpha|=j}|f_\alpha||x|^\alpha$,
Minkowski's inequality, the bound on $\norm{f_\alpha}_{L^2(P_\theta)}$,
\eqref{eq:degree-sum-bound}, and $\binom{j+p-1}{p-1}\le(j+1)^{p-1}$ give,
for $x$ in a fixed bounded set,
\[
 \norm{F(x)-T_KF(x)}_{L^2(P_\theta)}
 \le C\sum_{j>K}\frac{(j+1)^{(p-1)/2}(C\norm x)^j}{(j!)^{1/2}}.
\]
By \eqref{eq:factorial-tail} with $\gamma=1/2$, which rests on
Stirling's bound $\log j!\ge j\log j-j$, the right side is at most
$e^{-(K\log K)/2+CK}$. Squaring yields
\begin{equation}\label{eq:field-taylor-tail}
 \sup_{\theta\in\Theta}\sup_{x\in\calX}
 \E_\theta|F(x)-T_KF(x)|^2
 \le e^{-K\log K+CK}.
\end{equation}

\textbf{Weights and polynomial Gram matrix.}
Partition $Q$ into nearest-observation cells $V_{n,i}$, with a fixed
tie-breaking rule. Define $w_{n,i}=|V_{n,i}|/|Q|$. Then
\begin{equation}\label{eq:voronoi-weights}
 \sum_iw_{n,i}=1,\qquad
 0\le w_{n,i}\le Ch_n(Q)^p\le Cn^{-\beta}.
\end{equation}
Indeed, $V_{n,i}\subseteq B(x_{n,i},h_n(Q))$. Zero weights and repeated
locations are allowed.

Use the product Legendre basis on $[0,L]^p$, restricted to total degree
at most $K$, and normalized for uniform probability measure on $Q$.
Write its elements as $\psi_\nu$, $|\nu|\le K$, and set
\[
 \Phi_{i,\nu}=\psi_\nu(x_{n,i}),\quad
 W=\diag(w_{n,1},\ldots,w_{n,n}),\quad M_K=\Phi^\top W\Phi.
\]
The shifted Legendre polynomials have the explicit form
\begin{equation}\label{eq:legendre-coefficients}
 P_j(2x-1)
 =\sum_{a=0}^j(-1)^{j-a}
       \binom ja\binom{j+a}{a}x^a.
\end{equation}
This follows from Rodrigues' formula
$P_j(t)=(2^jj!)^{-1}\frac{d^j}{dt^j}(t^2-1)^j$
\citep[\S18.5]{dlmf}. With $t=2x-1$, we have $t^2-1=4(x^2-x)$ and
$\frac{d}{dt}=\frac12\frac{d}{dx}$, so
\begin{align*}
 P_j(2x-1) &=\frac1{j!}\frac{d^j}{dx^j}\bigl\{x^j(x-1)^j\bigr\}
 =\frac1{j!}\sum_{a=0}^j\binom ja(-1)^{j-a}\frac{d^j}{dx^j}x^{j+a} =\sum_{a=0}^j(-1)^{j-a}\binom ja\frac{(j+a)!}{j!\,a!}x^a,
\end{align*}
which is \eqref{eq:legendre-coefficients}. This identity implies bounds $e^{CK}K^C$ for these basis polynomials and their first derivatives on any fixed enlarged box. The same bounds hold for products of two basis polynomials, after changing $C$. Comparing the integral over each $V_{n,i}$ with its value at $x_{n,i}$ gives
\[
 |[M_K-\Id]_{\mu\nu}|
 \le h_n(Q)
 \sup|\nabla(\psi_\mu\psi_\nu)|.
\]
The supremum is taken on a fixed box containing all relevant line
segments. This remains valid when the assigned node is outside $Q$.
The number of basis elements is polynomial in $K$, and hence
\begin{equation}\label{eq:gram-approx}
 \norm{M_K-\Id}_{\op}\le h_n(Q)e^{CK}K^C.
\end{equation}
For $K=K_n$, this is $n^{-\kappa+o(1)}$, so $M_{K_n}$ and its inverse
are uniformly bounded for all sufficiently large $n$.

\textbf{Weighted regression and coefficient extraction.}
Fit the polynomial with Legendre coefficient vector
\[
 \widehat c=M_{K_n}^{-1}\Phi^\top WY_n.
\]
The basis has $q_K=\binom{K+p}{p}$ elements. For $|\alpha|\le K$, let
$L_{\alpha,K}\in\R^{1\times q_K}$ be the row vector that maps the Legendre
coefficient vector of a polynomial to its normalized monomial
coefficient of index $\alpha$:
\[
 L_{\alpha,K}c=\sqrt{\alpha!}\,[x^\alpha]
 \sum_{|\nu|\le K}c_\nu\psi_\nu(x),
 \qquad c\in\R^{q_K}.
\]
By
\eqref{eq:legendre-coefficients}, individual unnormalized Legendre
coefficients are at most $8^j$ on $[0,1]$. Rescaling to $[0,L]$,
taking products, and allowing polynomial dimension factors gives
\begin{equation}\label{eq:extraction-bound}
 \norm{L_{\alpha,K}}^2
 \le e^{m\log m+CK},\qquad |\alpha|\le m\le K.
\end{equation}
Here $\alpha!\le|\alpha|!\le m!\le e^{m\log m}$ was used, by the trivial
upper counterpart of \eqref{eq:stirling}.
Define $\widehat\zeta_\alpha=L_{\alpha,K_n}\widehat c$ for
$|\alpha|\le m_n$. This is a fixed linear function of $Y_n$.

Write $Y_n=F_n+\bar\varepsilon$, where
$F_n=(F(x_{n,1}),\ldots,F(x_{n,n}))^\T$ and
$\bar\varepsilon=(\varepsilon_{n,1},\ldots,\varepsilon_{n,n})^\T$.
Weighted regression reproduces $T_KF$ exactly. If
$r_i=F(x_{n,i})-T_KF(x_{n,i})$, the residual coefficient error obeys
\[
 |L_{\alpha,K}M_K^{-1}\Phi^\top Wr|^2
 \le (L_{\alpha,K}M_K^{-1}L_{\alpha,K}^\T)\,r^\top Wr.
\]
For the noise error, $\Cov_\theta(\bar\varepsilon)=t\Id_n$ and
$W^2\preceq(\max_iw_{n,i})W$ imply
\[
 \Var_\theta(L_{\alpha,K}M_K^{-1}\Phi^\top W\bar\varepsilon)
 \le t_+\max_iw_{n,i}\,
        L_{\alpha,K}M_K^{-1}L_{\alpha,K}^\top.
\]
The field residual and noise are independent. Using
\eqref{eq:field-taylor-tail}, \eqref{eq:voronoi-weights},
\eqref{eq:gram-approx}, and \eqref{eq:extraction-bound}, we obtain
\begin{equation}\label{eq:recovery-error-calculation}
 \sup_\theta\E_\theta(\widehat\zeta_\alpha-\zeta_\alpha)^2
 \le e^{m_n\log m_n+CK_n}
      \{n^{-\beta}+e^{-K_n\log K_n+CK_n}\}.
\end{equation}
Since $\log m_n$ and $\log K_n$ are both $(1+o(1))\log\log n$, we have
$m_n\log m_n=(\beta/4+o(1))\log n$ and
$K_n\log K_n=(\beta/2+o(1))\log n$, while $CK_n=o(\log n)$. Hence the two
contributions in
\eqref{eq:recovery-error-calculation} are respectively
$n^{-3\beta/4+o(1)}$ and $n^{-\beta/4+o(1)}$.
This proves \eqref{eq:recovery-mse}.

\textbf{Conditioning of the exact coefficient covariance.}
Let $e_\alpha$ denote the coordinate vector indexed by $\alpha$ in the
space $|\alpha|\le m$. Define
\[
 D_{m,l}e_\alpha=l^{-|\alpha|}e_\alpha
\]
and a truncated raising matrix
\[
 Q_me_\alpha=
 \sum_{\substack{1\le j\le p\\|\alpha|+2\le m}}
 \sqrt{(\alpha_j+1)(\alpha_j+2)}\,e_{\alpha+2e_j}.
\]
Since $Q_m$ raises the total degree by two, $Q_m^r=0$ for $2r>m$. As
for $V_M$ in the proof of \Cref{lem:projection}, $Q_m$ splits into $p$
matrices with orthogonal columns of norm at most $m$, so
$\norm{Q_m}_{\op}\le pm$.

We now derive \eqref{eq:triangular-transform} below in four steps. For a
function $f$ given by a power series $f(x)=\sum_\alpha([x^\alpha]f)\,x^\alpha$
that converges absolutely on $\R^p$, its normalized coefficients are
\[
 [\mathcal Nf]_\alpha=\sqrt{\alpha!}\,[x^\alpha]f,
 \qquad\text{so that}\qquad
 f(x)=\sum_{\alpha\in\N_0^p}[\mathcal Nf]_\alpha\,
 \frac{x^\alpha}{\sqrt{\alpha!}}.
\]
Write $\mathcal N_mf=([\mathcal Nf]_\alpha:|\alpha|\le m)\in\R^{q_m}$. By
\eqref{eq:coefficient-definition}, $\zeta^{(m)}=\mathcal N_mF$.

\textbf{Step 1: multiplication by $\norm x^2$.} Since
$\norm x^2f(x)=\sum_{j=1}^p\sum_\alpha([x^\alpha]f)\,x^{\alpha+2e_j}$,
for every $\beta$,
\[
 [x^\beta](\norm x^2f)=\sum_{j:\,\beta_j\ge2}[x^{\beta-2e_j}]f.
\]
Multiply by $\sqrt{\beta!}$ and write $\alpha=\beta-2e_j$. Then
$\beta!/\alpha!=(\alpha_j+1)(\alpha_j+2)$, so
\[
 [\mathcal N(\norm x^2f)]_\beta
 =\sum_{j:\,\beta_j\ge2}\sqrt{(\alpha_j+1)(\alpha_j+2)}\,
 [\mathcal Nf]_{\beta-2e_j}.
\]
For $|\beta|\le m$, the right side involves only indices of degree at
most $m-2$, and it is exactly the $\beta$th entry of $Q_m\mathcal N_mf$.
Hence $\mathcal N_m(\norm x^2f)=Q_m\mathcal N_mf$.

\textbf{Step 2: multiplication by $e^{a\norm x^2}$.} By induction,
$\mathcal N_m(\norm x^{2r}f)=Q_m^r\mathcal N_mf$ for every $r\ge0$. For
$a\in\R$, $e^{a\norm x^2}f=\sum_{r\ge0}a^r\norm x^{2r}f/r!$, and all these
power series converge absolutely, so coefficients may be extracted term
by term. Since $Q_m^r=0$ for $2r>m$,
\[
 \mathcal N_m\bigl(e^{a\norm x^2}f\bigr)
 =\sum_{r\le m/2}\frac{a^r}{r!}Q_m^r\mathcal N_mf
 =e^{aQ_m}\mathcal N_mf.
\]

\textbf{Step 3: the coefficients of $H$.} By the definition of $H$,
$[x^\gamma]H=\sqrt s\,\xi_\gamma/(l^{|\gamma|}\sqrt{\gamma!})$, so
$[\mathcal NH]_\gamma=\sqrt s\,l^{-|\gamma|}\xi_\gamma$, that is,
$\mathcal N_mH=\sqrt s\,D_{m,l}\xi^{(m)}$.

\textbf{Step 4: the coefficients of $F$.} Since $F=e^{a_l\norm x^2}H$
with $a_l=-1/(2l^2)$, Steps 2 and 3 give
\begin{equation}\label{eq:triangular-transform}
 \zeta^{(m)}=\mathcal N_mF
 =e^{a_lQ_m}\mathcal N_mH
 =\sqrt{s}\,e^{-Q_m/(2l^2)}D_{m,l}\xi^{(m)}.
\end{equation}
Entrywise, this is \eqref{eq:coefficient-explicit} multiplied by
$\sqrt{\alpha!}$.
The transform and its inverse have operator norms at most $e^{Cm}$:
$\norm{e^{\pm Q_m/(2l^2)}}_{\op}\le e^{pm/(2l_-^2)}\le e^{Cm}$ and
$\norm{D_{m,l}^{\pm1}}_{\op}\le e^{Cm}$. Constants can be absorbed
into the exponent for $m\ge1$. This proves
\eqref{eq:coefficient-eigenvalues}.

\textbf{Relative covariance error and affinity.}
Let $\delta_n=\widehat\zeta^{(m_n)}-\zeta^{(m_n)}$, a centered random
vector. Since $q_{m_n}\le(m_n+1)^p=n^{o(1)}$ and $e^{Cm_n}=n^{o(1)}$,
\eqref{eq:recovery-mse} and \eqref{eq:coefficient-eigenvalues} give
\begin{align*}
 \sup_\theta\E_\theta\norm{\delta_n}^2
 &=\sup_\theta\sum_{|\alpha|\le m_n}
 \E_\theta(\widehat\zeta_\alpha-\zeta_\alpha)^2
 \le n^{-\beta/4+o(1)},\\
 \sup_\theta\E_\theta\norm{\zeta^{(m_n)}}^2
 &=\sup_\theta\tr\Sigma_{m_n,\theta}\le q_{m_n}e^{Cm_n}=n^{o(1)}.
\end{align*}
Since $\widehat\zeta^{(m_n)}=\zeta^{(m_n)}+\delta_n$,
\[
 \widehat\Sigma_{n,\theta}-\Sigma_{m_n,\theta}
 =\E_\theta\bigl[\zeta^{(m_n)}\delta_n^\top\bigr]
 +\E_\theta\bigl[\delta_n\zeta^{(m_n)\top}\bigr]
 +\E_\theta\bigl[\delta_n\delta_n^\top\bigr].
\]
For unit vectors $u,v$, the Cauchy--Schwarz inequality gives
\[
 \bigl|u^\top\E_\theta[\zeta^{(m_n)}\delta_n^\T]v\bigr|
 \le\bigl\{\E_\theta(u^\top\zeta^{(m_n)})^2\,
 \E_\theta(v^\top\delta_n)^2\bigr\}^{1/2}
 \le\bigl\{\E_\theta\norm{\zeta^{(m_n)}}^2\,
 \E_\theta\norm{\delta_n}^2\bigr\}^{1/2},
\]
and similarly for the transposed term, while
$\norm{\E_\theta[\delta_n\delta_n^\T]}_{\op}
=\sup_{\norm u=1}\E_\theta(u^\top\delta_n)^2\le\E_\theta\norm{\delta_n}^2$.
Therefore
\begin{align*}
 \norm{\widehat\Sigma_{n,\theta}-\Sigma_{m_n,\theta}}_{\op}
 &\le \E_\theta\norm{\delta_n}^2+
 2\sqrt{\E_\theta\norm{\delta_n}^2\,
             \E_\theta\norm{\zeta^{(m_n)}}^2}\\
 &\le n^{-\beta/4+o(1)}+n^{-\beta/8+o(1)}
 =n^{-\beta/8+o(1)}.
\end{align*}
Finally,
$\norm{\Sigma^{-1/2}A\Sigma^{-1/2}}_{\op}\le\norm{\Sigma^{-1}}_{\op}\norm A_{\op}$
and $\norm{\Sigma_{m_n,\theta}^{-1}}_{\op}\le e^{Cm_n}=n^{o(1)}$ by
\eqref{eq:coefficient-eigenvalues}, which proves
\eqref{eq:relative-recovery}.

Let $\eta_n=n^{-\beta/8+o(1)}$ be a common bound in that equation.
For large $n$,
\[
 (1-\eta_n)\Sigma_{m_n,\theta}
 \preceq\widehat\Sigma_{n,\theta}
 \preceq(1+\eta_n)\Sigma_{m_n,\theta}.
\]
The same inequalities hold for sums of two such covariance matrices.
Using \eqref{eq:gaussian-affinity} and the corresponding log determinant
bounds, the absolute difference of log affinities is at most
$Cq_{m_n}\eta_n=o(1)$. This proves \eqref{eq:recovery-affinity}.
The comparison is multiplicative for affinity, which will be essential
when the affinity itself is exponentially small.
\end{proof}

\subsection{\texorpdfstring{Proof of \Cref{lem:affinity}}{Proof of the affinity separation lemma}}\label{app:affinity}

\begin{proof}
\textbf{Triangular structure.}
Order the indices $\alpha\in I_m=\{|\alpha|\le m\}$ by total degree, with
an arbitrary order within each degree. Write $\alpha(1),\ldots,\alpha(q)$
for the ordered indices, where $q=q_m$, write $\zeta_k$ and $\xi_k$ for
the coordinates of $\zeta^{(m)}$ and $\xi^{(m)}$ with index $\alpha(k)$,
and put $\zeta_{<k}=(\zeta_1,\ldots,\zeta_{k-1})$ and
$\xi_{<k}=(\xi_1,\ldots,\xi_{k-1})$. By \eqref{eq:triangular-transform},
$\zeta^{(m)}=\sqrt s\,\mathcal T_\theta\xi^{(m)}$ with
$\mathcal T_\theta=e^{-Q_m/(2l^2)}D_{m,l}$. Since $Q_m$ raises total degree
by two, every nonzero entry of $Q_m^r$, $r\ge1$, lies in a row of
strictly higher degree than its column. Hence
$e^{-Q_m/(2l^2)}=\sum_{r\le m/2}(-2l^2)^{-r}Q_m^r/r!$ is unit lower
triangular in this order, and $\mathcal T_\theta$ is lower triangular with
diagonal entries $l^{-|\alpha(k)|}$:
\[
 \zeta_k=\sqrt s\,l^{-|\alpha(k)|}\xi_k
 +\sqrt s\sum_{i<k}(\mathcal T_\theta)_{ki}\,\xi_i,
 \qquad k=1,\ldots,q.
\]
The leading $(k-1)\times(k-1)$ block of $\mathcal T_\theta$ is invertible,
so $\zeta_{<k}$ and $\xi_{<k}$ are invertible linear functions of each
other, and $\xi_k$ is independent of $\xi_{<k}$. Therefore, under
$P_\theta$, the conditional law of $\zeta_k$ given $\zeta_{<k}$ is
$N(\mu_{\theta,k}(\zeta_{<k}),\omega_{\theta,k})$, where $\mu_{\theta,k}$ is a
linear function of $\zeta_{<k}$ and
\begin{equation}\label{eq:conditional-variance}
 \omega_{\theta,k}=\Var_\theta(\zeta_k\mid\zeta_{<k})
 =s\,l^{-2|\alpha(k)|}.
\end{equation}
The conditional variance does not depend on $\zeta_{<k}$.

\textbf{Univariate affinity.}
For normal densities with means $\mu,\mu'$ and variances $\omega,\omega'$, the
exponent of the product of the two square roots satisfies
\[
 -\frac{(x-\mu)^2}{4\omega}-\frac{(x-\mu')^2}{4\omega'}
 =-\frac{(x-\bar\mu)^2}{2w}-\frac{(\mu-\mu')^2}{4(\omega+\omega')},
 \qquad w=\frac{2\omega\omega'}{\omega+\omega'},
\]
for a suitable $\bar\mu$. Integrating with respect to $x$ gives
\begin{align*}
 \Aff\{N(\mu,\omega),N(\mu',\omega')\}
 &=\frac{\sqrt{2\pi w}}{\sqrt{2\pi}\,(\omega\omega')^{1/4}}
 \exp\Bigl\{-\frac{(\mu-\mu')^2}{4(\omega+\omega')}\Bigr\} =\left(\frac{2\sqrt{\omega\omega'}}{\omega+\omega'}\right)^{1/2}
   \exp\left\{-\frac{(\mu-\mu')^2}{4(\omega+\omega')}\right\}.
\end{align*}
Since $(\omega+\omega')/(2\sqrt{\omega\omega'})
=\frac12(\sqrt{\omega/\omega'}+\sqrt{\omega'/\omega})
=\cosh\{\frac12\log(\omega/\omega')\}$, it follows that
\begin{equation}\label{eq:univariate-affinity}
 \Aff\{N(\mu,\omega),N(\mu',\omega')\}
 \le\cosh\Bigl\{\frac12\log\frac\omega{\omega'}\Bigr\}^{-1/2}
 \qquad\text{for all }\mu,\mu'.
\end{equation}

\textbf{Successive integration.}
Let $\pi_\theta$ denote the density of $\zeta^{(m)}$ under $P_\theta$, and
$\pi_\theta(\zeta_k\mid\zeta_{<k})$ its conditional densities, so that
$\pi_\theta(\zeta)=\prod_{k=1}^q\pi_\theta(\zeta_k\mid\zeta_{<k})$. For
$0\le k\le q$ put
\[
 \mathcal A_k=\int_{\R^k}\prod_{i=1}^k
 \sqrt{\pi_\theta(\zeta_i\mid\zeta_{<i})\,
 \pi_{\theta_0}(\zeta_i\mid\zeta_{<i})}\;d\zeta_1\cdots d\zeta_k,
\]
so that $\mathcal A_0=1$ and
$\mathcal A_q=\Aff(\Sigma_{m,\theta},\Sigma_{m,\theta_0})$. The integrand is
nonnegative, so by Tonelli's theorem we may integrate over $\zeta_k$
first, with $\zeta_{<k}$ fixed. By \eqref{eq:univariate-affinity},
\[
 \int_\R\sqrt{\pi_\theta(\zeta_k\mid\zeta_{<k})\,
 \pi_{\theta_0}(\zeta_k\mid\zeta_{<k})}\;d\zeta_k
 \le\rho_k:=\cosh\Bigl\{\frac12\log\frac{\omega_{\theta,k}}{\omega_{\theta_0,k}}\Bigr\}^{-1/2}.
\]
This bound is the same for every value of $\zeta_{<k}$, because the
conditional variances \eqref{eq:conditional-variance} do not depend on
$\zeta_{<k}$. The conditional means, which do depend on $\zeta_{<k}$ and
on the parameter, enter only through the exponential factor of the
univariate affinity, which is at most one. Hence
$\mathcal A_k\le\rho_k\mathcal A_{k-1}$, and by induction
$\Aff(\Sigma_{m,\theta},\Sigma_{m,\theta_0})\le\prod_{k=1}^q\rho_k$.
Finally, with $u=\log(s/s_0)$ and $v=\log(l/l_0)$,
\eqref{eq:conditional-variance} gives
$\log(\omega_{\theta,k}/\omega_{\theta_0,k})=u-2|\alpha(k)|v$. Therefore
\begin{equation}\label{eq:coefficient-affinity-product}
 \Aff(\Sigma_{m,\theta},\Sigma_{m,\theta_0})
 \le\prod_{k=1}^q\cosh\bigl(u/2-|\alpha(k)|v\bigr)^{-1/2}
 =\exp\left\{-\frac12\sum_{|\alpha|\le m}
             \log\cosh(u/2-|\alpha|v)\right\}.
\end{equation}

\textbf{A fixed fraction of separated degrees.}
To bound the sum, select a multiindex $\alpha$ uniformly from $I_m$, let
$J=|\alpha|$, and put $Z=J/m$ for $m\ge1$. The number of indices of degree
$j$ is $\binom{j+p-1}{p-1}$. Binomial summation gives
\[
 \E Z=\frac{p}{p+1},\qquad
 \E Z^2=\frac{p}{p+2}
       +\frac{p}{(p+1)(p+2)m}.
\]
The matrix
$\mathbb M=\begin{pmatrix}1&-\E Z\\-\E Z&\E Z^2\end{pmatrix}$ has
$\det\mathbb M=\Var Z\ge\frac p{p+2}-\frac{p^2}{(p+1)^2}
=\frac p{(p+2)(p+1)^2}$ and $\tr\mathbb M\le2$, so
\[
 \lambda_{\min}(\mathbb M)\ge\frac{\det\mathbb M}{\tr\mathbb M}
 \ge c_p:=\frac p{2(p+2)(p+1)^2}.
\]
Put $a=u/2$, $b=mv$, and $L_0=\sqrt{a^2+b^2}$, so that
$a-bZ=u/2-|\alpha|v$. Then
\[
 \E(a-bZ)^2=a^2-2ab\,\E Z+b^2\E Z^2
 =(a,b)\,\mathbb M\,(a,b)^\top\ge c_pL_0^2,
\]
and, since $0\le Z\le1$, $|a-bZ|\le|a|+|b|\le\sqrt2L_0$. If $L_0=0$, then
$u=v=0$ and \eqref{eq:degree-separation} below is trivial. Otherwise,
\Cref{lem:fraction} with $X=a-bZ$, $\sigma^2=c_pL_0^2$, $M=\sqrt2L_0$, and
$\lambda=1/\sqrt2$ shows that the set
\[
 \mathcal G=\bigl\{\alpha\in I_m:|u/2-|\alpha|v|\ge c_0L_0\bigr\},
 \qquad c_0=(c_p/2)^{1/2},
\]
contains at least $(c_p/4)q_m\ge(c_p/4)m^p/p!$ indices.

\textbf{Lower bound for the sum.} For all real $z$,
\begin{equation*}
 \log\cosh(1)\min(z^2,|z|)\le\log\cosh z\le\min(z^2/2,|z|).
\end{equation*}
The upper bound is elementary. For the lower bound, the function
$g(z)=2\log\cosh z-z\tanh z$ satisfies $g(0)=0$ and
$g'(z)=\operatorname{sech}^2z\,(\tfrac12\sinh2z-z)\ge0$ for $z\ge0$, so
$(\log\cosh z)/z^2$ is nonincreasing on $(0,\infty)$; this gives the
bound for $|z|\le1$. For $|z|\ge1$, it follows from convexity of
$\log\cosh$ and $\log\cosh0=0$. For $\alpha\in\mathcal G$, since
$\log\cosh$ is increasing in $|z|$ and $c_0\le1$,
\[
 \log\cosh(u/2-|\alpha|v)\ge\log\cosh(c_0L_0)
 \ge\log\cosh(1)\,c_0^2\min(L_0^2,L_0).
\]
Since $L_0^2\ge(u^2+m^2v^2)/4$, we have
$\min(L_0^2,L_0)\ge\frac14\min\{u^2+m^2v^2,(u^2+m^2v^2)^{1/2}\}$.
Discarding the nonnegative terms with $\alpha\notin\mathcal G$ and using
$|\mathcal G|\ge(c_p/4)m^p/p!$ gives
\begin{equation}\label{eq:degree-separation}
 \sum_{|\alpha|\le m}\log\cosh(u/2-|\alpha|v)
 \ge cm^p\min\bigl\{u^2+m^2v^2,(u^2+m^2v^2)^{1/2}\bigr\}.
\end{equation}

By \Cref{lem:affinity-monotone}, the affinity of the observation laws
is at most the affinity of the laws of the parameter-independent linear
statistic $H_nY_n$. By \Cref{lem:recovery}, the latter log affinity differs from that of the exact
coefficient vector by $o(1)$, uniformly over the parameters. Combining
this fact with \eqref{eq:coefficient-affinity-product},
\eqref{eq:degree-separation}, and $m_n\asymp b_n$ proves
\eqref{eq:affinity-separation}. The coefficient law has no nugget
parameter, and its recovery bound is uniform in $t$; this proves the
claimed uniformity in the nugget as well.

We now prove \eqref{eq:affinity-joint}. On the rectangle, the mean value
theorem gives $|\log(s/s_0)|\ge|s-s_0|/s_+$ and
$|\log(l/l_0)|\ge|l-l_0|/l_+$. Hence, with
$c_1=\min\{1,s_+^{-2},l_+^{-2}\}$,
\[
 \Lambda_n(\theta)^2
 \ge c_1\{(s-s_0)^2+b_n^2(l-l_0)^2\}
 =c_1b_n^{-p}\norm{h_\spat}^2 .
\]
Since $x\mapsto\min(x^2,x)$ is increasing on $[0,\infty)$,
\[
 b_n^p\min\{\Lambda_n(\theta)^2,\Lambda_n(\theta)\}
 \ge\min\bigl\{c_1\norm{h_\spat}^2,c_1^{1/2}b_n^{p/2}\norm{h_\spat}\bigr\}
 \ge c_1\min\bigl(\norm{h_\spat}^2,b_n^{p/2}\norm{h_\spat}\bigr),
\]
and \eqref{eq:affinity-separation} gives
$\Aff(P_{\theta,n},P_{\theta_0,n})
\le\exp\{e_n-cc_1\min(\norm{h_\spat}^2,b_n^{p/2}\norm{h_\spat})\}$.

For the nugget, use the noise block $W_n=V_n^\top Y_n$ of
\eqref{eq:block-notation}. Since $P_nV_n=0$,
$\Cov_\theta(W_n)=t\Id_{d_n}+E_\theta$ with
$E_\theta=V_n^\top S_n(\theta)V_n
=V_n^\top\{S_n(\theta)-P_nS_n(\theta)P_n\}V_n$. By
\eqref{eq:projection-approx}, $0\preceq E_\theta\preceq n^{-20}\Id_{d_n}$
for all $\theta\in\Theta$. Therefore the eigenvalues
$\lambda_1,\ldots,\lambda_{d_n}$ of
$\Cov_{\theta_0}(W_n)^{-1/2}\Cov_\theta(W_n)\Cov_{\theta_0}(W_n)^{-1/2}$
satisfy $|\log\lambda_k-\log(t/t_0)|\le n^{-20}/t_-$. By
\eqref{eq:gaussian-affinity},
\[
 -\log\Aff\{\Cov_\theta(W_n),\Cov_{\theta_0}(W_n)\}
 =\frac12\sum_{k=1}^{d_n}\log\cosh\Bigl(\frac12\log\lambda_k\Bigr).
\]
The function $y\mapsto\log\cosh(y/2)$ is $\tfrac12$-Lipschitz, and since
its second derivative is $\frac14\operatorname{sech}^2(y/2)$, it is at
least $c_2y^2$ for $|y|\le\log(t_+/t_-)$, where
$c_2=\frac18\operatorname{sech}^2\{\frac12\log(t_+/t_-)\}$. Using
$|\log(t/t_0)|\ge|t-t_0|/t_+$, $d_n\le n$, and $d_n\ge n/2$ for large $n$,
\[
 -\log\Aff\{\Cov_\theta(W_n),\Cov_{\theta_0}(W_n)\}
 \ge\frac{c_2}{4t_+^2}\,n(t-t_0)^2-\frac{n^{-19}}{t_-}
 =\frac{c_2}{4t_+^2}\,h_t^2-\frac{n^{-19}}{t_-}.
\]
The statistic $W_n=V_n^\top Y_n$ is linear and parameter independent, so
\Cref{lem:affinity-monotone} shows that the affinity of the observation
laws is at most this affinity. Since
$\min(e^{-x},e^{-y})\le e^{-(x+y)/2}$, the spatial and nugget bounds
together prove \eqref{eq:affinity-joint}, with $c$ replaced by
$\frac12\min\{cc_1,c_2/(4t_+^2)\}$ and $e_n$ by $\max(e_n,n^{-19}/t_-)$.
The design enters only through \Cref{lem:projection,lem:recovery}. Their
constants depend on $\calX$, $Q$, $C_h$, and $\kappa$, and their proofs use
\eqref{eq:coverage} only at the current $n$. No constant depends on
$\theta_0$: the bounds of \Cref{lem:recovery} hold uniformly over pairs
$\theta,\theta'\in\Theta$, and the remaining constants depend only on
$p$ and the endpoints of $\Theta$.
\end{proof}

\subsection{\texorpdfstring{Proof of \Cref{lem:lr-process}}{Proof of the likelihood-ratio process lemma}}\label{app:lr-process}

\begin{proof}
The matrix $C_n(\theta)$ is a smooth positive definite function of
$\theta$ on a neighborhood of $\Theta$, so $l_n$ is continuously
differentiable there and $\xi_n$ has continuous paths. Write
$\theta_h=\theta_0+D_n^{-1}h$ and
$\nabla_hl_n(h)=D_n^{-1}(\nabla l_n)(\theta_h)$. Since $Z_n(h)$ is
the density of $P_{\theta_h,n}$ with respect to $P_{\theta_0,n}$,
\[
 \E_{\theta_0,n}\xi_n(h)^m=\E_{\theta_0,n}Z_n(h)^{1/2}
 =\int\sqrt{dP_{\theta_h,n}\,dP_{\theta_0,n}},
\]
which is \eqref{eq:lr-moment}.

For \eqref{eq:lr-lipschitz}, let $u,v\in U_n$ and $w_r=v+r(u-v)$ for
$r\in[0,1]$; these points lie in the box $U_n$. Since
$\frac{d}{dr}\xi_n(w_r)=(2m)^{-1}\xi_n(w_r)\nabla_hl_n(w_r)^\T(u-v)$,
Minkowski's integral inequality gives
\[
 \norm{\xi_n(u)-\xi_n(v)}_{L^m}
 \le\frac{\norm{u-v}}{2m}\sup_{w\in U_n}
 \bigl(\E_{\theta_0,n}[Z_n(w)^{1/2}\norm{\nabla_hl_n(w)}^m]\bigr)^{1/m}.
\]
Fix $w$ and abbreviate $\theta=\theta_w$. Changing measure and applying
the Cauchy--Schwarz inequality under $P_{\theta,n}$,
\[
\begin{aligned}
 \E_{\theta_0,n}\bigl[Z_n(w)^{1/2}\norm{\nabla_hl_n(w)}^m\bigr]
 &=\E_{\theta,n}\bigl[Z_n(w)^{-1/2}\norm{\nabla_hl_n(w)}^m\bigr]\\
 &\le\bigl(\E_{\theta,n}Z_n(w)^{-1}\bigr)^{1/2}
 \bigl(\E_{\theta,n}\norm{\nabla_hl_n(w)}^{2m}\bigr)^{1/2},
\end{aligned}
\]
and $\E_{\theta,n}Z_n(w)^{-1}=\E_{\theta_0,n}1=1$. Under $P_{\theta,n}$,
$X=C_n(\theta)^{-1/2}Y_n\sim N_n(0,\Id_n)$, and the coordinates of
$\nabla_hl_n(w)$ are $\frac12\{X^\top B_a(\theta)X-\tr B_a(\theta)\}$
for $a\in\{s,l,t\}$, where
$B_a(\theta)=D_{n,aa}^{-1}C_n(\theta)^{-1/2}C_{n,a}(\theta)C_n(\theta)^{-1/2}$.
Dividing \eqref{eq:raw-frobenius} by $D_{n,aa}$ gives
$\norm{B_a(\theta)}_\Frob\le C$ for $a\in\{s,l\}$, uniformly over
$\theta\in\Theta$ and $n$, and
$\norm{B_t(\theta)}_\Frob=n^{-1/2}\norm{C_n(\theta)^{-1}}_\Frob\le t_-^{-1}$.
\Cref{lem:qf-moments} therefore gives
$\E_{\theta,n}\norm{\nabla_hl_n(w)}^{2m}\le C$, uniformly in $w$ and
$n$, which proves \eqref{eq:lr-lipschitz}.
\end{proof}

\subsection{\texorpdfstring{Proof of \Cref{lem:tail}}{Proof of the tail lemma}}\label{app:tail}

\begin{proof}
Take $m=4$, so that $m>k=3$, and let $\xi_n$ be the process of
\Cref{lem:lr-process} on the box $U_n=D_n(\Theta-\theta_0)$. Write
$u=(u_1,u_2)$, where $u_1\in\R^2$ holds the spatial coordinates and
$u_2\in\R$ the nugget coordinate. Let $n$ be large enough that $b_n\ge1$. By
\eqref{eq:lr-moment} and \eqref{eq:affinity-joint}, the envelope
\eqref{eq:envelope-moment} holds with $A=\exp(\sup_ne_n)$, with $a$ equal
to the constant $c$ of \eqref{eq:affinity-joint}, and with
$\gamma_1=b_n^{p/2}$ and $\gamma_2=\infty$. By \eqref{eq:lr-lipschitz},
\eqref{eq:envelope-lipschitz} holds with $B=C_L$. These constants do not
depend on $n$.

Since $\thetan$ maximizes $l_n$ over $\Theta$,
$\xi_n(\widehat h_n)\ge\xi_n(0)=1$. Hence Markov's inequality and
\Cref{cor:envelope} with $(H_1,H_2)=(H,0)$ give
\[
\begin{aligned}
 \PP\{\norm{\widehat h_{n,\spat}}\ge H\}
 &\le\PP\Bigl\{\sup_{u\in U_n,\,|u_1|\ge H}\xi_n(u)\ge1\Bigr\}
 \le\E\sup_{u\in U_n,\,|u_1|\ge H}\xi_n(u)^m\\
 &\le C\exp\{-a''\min(H^2,b_n^{p/2}H)\},
\end{aligned}
\]
with $a''=a/128$, since $\psi_{\gamma_2}(0)=0$. With $(H_1,H_2)=(0,H)$,
the same argument gives
$\PP\{|\widehat h_{n,t}|\ge H\}\le C\exp(-a''H^2)$.
This proves \eqref{eq:tail-bound}. The constants $A$, $a$, $B$, and
$\gamma_1$ do not depend on $\theta_0$, and neither do the constants of
\Cref{cor:envelope}, which do not depend on the box $U_n$. The design
enters only through \eqref{eq:affinity-joint} and \Cref{lem:lr-process}.
This gives the stated dependence of the constants.
\end{proof}

\subsection{\texorpdfstring{Proof of \Cref{lem:information}}{Proof of the information bounds lemma}}\label{app:information}

\begin{proof}
The spatial upper bounds follow from
\eqref{eq:fisher} and the first derivative Frobenius bounds in
\Cref{lem:projection}. The nugget upper bound follows from
$C_n^{-1}\preceq t_-^{-1}\Id_n$.

\textbf{Spatial lower bound.}
Write $D_{n,\spat}=\diag(b_n^{p/2},b_n^{p/2+1})$.
For $\norm{h}\le1$ in $\R^2$, set
$\theta_h=((s_0,l_0)+D_{n,\spat}^{-1}h,t_0)$ and
\[
 E_n(h)=C_{n,0}^{-1/2}
       \{C_n(\theta_h)-C_{n,0}\}C_{n,0}^{-1/2}.
\]
For $a\in\{s,l\}$, let
$B_{n,a}=D_{n,aa}^{-1}C_{n,0}^{-1/2}C_{n,a}(\theta_0)C_{n,0}^{-1/2}$ be the
normalized whitened first derivatives at $\theta_0$.

\textbf{Covariance comparability.}
Fix $h$ with $\norm h\le1$, put $C(u)=C_n(\theta_{uh})$ for $u\in[0,1]$,
and fix $x\in\R^n\setminus\{0\}$. The function $f(u)=x^\top C(u)x$ is
positive, and
\[
 f'(u)=\sum_{a\in\{s,l\}}h_aD_{n,aa}^{-1}\,x^\top C_{n,a}(\theta_{uh})x
 =\sum_{a\in\{s,l\}}h_a\,y^\top\widetilde B_a(u)y,
\]
where $y=C(u)^{1/2}x$ and
$\widetilde B_a(u)=D_{n,aa}^{-1}C(u)^{-1/2}C_{n,a}(\theta_{uh})C(u)^{-1/2}$.
Here the whitening is at the same parameter as the derivative, so
dividing \eqref{eq:raw-operator} by $D_{n,aa}$ gives
$\norm{\widetilde B_a(u)}_{\op}\le C\epsilon_n$, uniformly in $u$ and $h$.
Since $|h_s|+|h_l|\le\sqrt2$ and $\norm y^2=f(u)$,
\[
 |f'(u)|\le\sqrt2\,C\epsilon_n f(u),
 \qquad\text{that is,}\qquad
 \Bigl|\frac{d}{du}\log f(u)\Bigr|\le\sqrt2\,C\epsilon_n.
\]
Integrating over $[0,u]$ gives
$e^{-\sqrt2C\epsilon_n}f(0)\le f(u)\le e^{\sqrt2C\epsilon_n}f(0)$. Since $x$ was
arbitrary, after renaming the constant,
\begin{equation}\label{eq:info-comparability}
 e^{-C\epsilon_n}C_{n,0}\preceq C_n(\theta_{uh})\preceq e^{C\epsilon_n}C_{n,0},
 \qquad u\in[0,1],\ \norm h\le1.
\end{equation}

\textbf{Change of whitening.}
Let $K(u)=C_{n,0}^{-1/2}C(u)^{1/2}$. Then
$\norm{K(u)}_{\op}^2=\norm{C_{n,0}^{-1/2}C(u)C_{n,0}^{-1/2}}_{\op}$, which
is at most $e^{C\epsilon_n}$ by \eqref{eq:info-comparability}. For any
$n\times n$ matrix $A$,
\[
 C_{n,0}^{-1/2}AC_{n,0}^{-1/2}=K(u)\{C(u)^{-1/2}AC(u)^{-1/2}\}K(u)^\T,
\]
so
\begin{equation}\label{eq:rewhitening}
 \norm{C_{n,0}^{-1/2}AC_{n,0}^{-1/2}}_{\Frob}
 \le e^{C\epsilon_n}\norm{C(u)^{-1/2}AC(u)^{-1/2}}_{\Frob}.
\end{equation}

\textbf{Taylor expansion.}
Taylor's formula with integral remainder, applied to $u\mapsto C(u)$,
gives $E_n(h)=h_sB_{n,s}+h_lB_{n,l}+R_n(h)$ with
\[
 R_n(h)=\int_0^1(1-u)\,C_{n,0}^{-1/2}C''(u)\,C_{n,0}^{-1/2}\,du,
 \qquad
 C''(u)=\sum_{a,b\in\{s,l\}}h_ah_bD_{n,aa}^{-1}D_{n,bb}^{-1}
 C_{n,ab}(\theta_{uh}).
\]
By \Cref{lem:projection}, the normalized second derivatives whitened at
$\theta_{uh}$ have Frobenius norm $O(\epsilon_n)$, so
$\norm{C(u)^{-1/2}C''(u)C(u)^{-1/2}}_{\Frob}=O(\epsilon_n)$. By
\eqref{eq:rewhitening}, the same holds after whitening at $C_{n,0}$, and
therefore
\begin{equation}\label{eq:relative-increment}
 E_n(h)=h_sB_{n,s}+h_lB_{n,l}+R_n(h),\quad
 \sup_{\norm{h}\le1}\norm{R_n(h)}_{\Frob}=O(\epsilon_n).
\end{equation}
Moreover, $\norm{B_{n,a}}_\Frob\le C$ by \Cref{lem:projection}, so
$\norm{E_n(h)}_{\Frob}=O(1)$, and \eqref{eq:info-comparability} with
$u=1$ places every eigenvalue of $E_n(h)$ in
$[e^{-C\epsilon_n}-1,e^{C\epsilon_n}-1]$, so
$\norm{E_n(h)}_{\op}=O(\epsilon_n)$, uniformly on $\norm h\le1$.

For any symmetric $E$ with small operator norm,
\[
 \frac12\log\det(\Id+E/2)-\frac14\log\det(\Id+E)
 =\frac1{16}\tr(E^2)
  +O(\norm{E}_{\op}\norm{E}_{\Frob}^2).
\]
By \eqref{eq:gaussian-affinity} and \eqref{eq:relative-increment},
\begin{equation}\label{eq:local-affinity-expansion}
 -\log\Aff(P_{\theta_h,n},P_{\theta_0,n})
 =\frac18h^\top J_{n,\spat}h+o(1)
\end{equation}
uniformly on $\norm{h}\le1$.

For $\norm{h}=1$, the point $\theta_h$ has rescaled coordinates
$(h,0)$, with spatial part $h$ and nugget part $0$. Hence
\eqref{eq:affinity-joint} gives, once $b_n\ge1$,
\[
 -\log\Aff(P_{\theta_h,n},P_{\theta_0,n})
 \ge c\min(1,b_n^{p/2})-e_n=c-e_n.
\]
Comparing with \eqref{eq:local-affinity-expansion}, we obtain
$\frac18h^\top J_{n,\spat}h\ge c-o(1)$ uniformly on the unit circle, so
$J_{n,\spat}\succeq4c\Id_2$ for all sufficiently large $n$. This argument uses a finite local
shift and does not differentiate the coefficient recovery error.

\textbf{Nugget block and cross information.}
By \Cref{lem:projection}, the positive signal covariance $S_n(\theta_0)$
has at most $r_n=O(b_n^p)$ eigenvalues exceeding $n^{-20}$.
Therefore
\[
 \frac{\calI_{tt,n}(\theta_0)}n
 =\frac1{2n}\tr(C_{n,0}^{-2})
 \longrightarrow\frac1{2t_0^2}.
\]
For $a=s,l$, approximate the normalized whitened derivative $B_{n,a}$
by a matrix $B_{n,a}^{(r)}$ of rank at most $r_n$, using the same
projection. Its Frobenius norm is bounded and the discarded part is
negligible in all traces below. The normalized nugget derivative is
$B_{n,t}=C_{n,0}^{-1}/\sqrt n$, so
\[
 |J_{n,at}|
 \le\frac12\norm{B_{n,t}}_{\op}
      \norm{B_{n,a}^{(r)}}_*
      +o(1)
 \le C\sqrt{\frac{r_n}{n}}+o(1)=o(1),
\]
where $\norm{\cdot}_*$ is the sum of singular values and
$\norm{B^{(r)}}_*\le\sqrt{r_n}\norm{B^{(r)}}_{\Frob}$.
The spatial lower bound and the nugget limit establish
\eqref{eq:information-bounds} and \eqref{eq:information-block}.
\end{proof}

\subsection{\texorpdfstring{Proof of \Cref{lem:quadratic}}{Proof of the local expansion lemma}}\label{app:quadratic}

\begin{proof}
Let $X_n=C_{n,0}^{-1/2}Y_n\sim N_n(0,\Id_n)$.
Define, for $a,b\in\{s,l,t\}$,
\[
 B_a=\frac{C_{n,0}^{-1/2}C_{n,a}(\theta_0)C_{n,0}^{-1/2}}{D_{n,aa}},
 \quad
 B_{ab}=
 \frac{C_{n,0}^{-1/2}C_{n,ab}(\theta_0)C_{n,0}^{-1/2}}
 {D_{n,aa}D_{n,bb}}.
\]
The Gaussian score is
\[
 \Delta_{n,a}=\frac12\{X_n^\top B_aX_n-\tr B_a\}.
\]
It has mean zero and covariance
$J_{n,ab}=\frac12\tr(B_aB_b)$.

Directly differentiating the likelihood gives
\begin{equation}\label{eq:centered-hessian}
 [\nabla_h^2L_n(0)+J_n]_{ab}
 =\frac12\{X_n^\top M_{ab}X_n-\tr M_{ab}\},
 \quad M_{ab}=B_{ab}-B_aB_b-B_bB_a.
\end{equation}
\Cref{lem:projection} gives
$\norm{B_a}_{\op}\le C\epsilon_n$,
$\norm{B_a}_{\Frob}\le C$, and
$\norm{B_{ab}}_{\Frob}\le C\epsilon_n$; the nugget's operator
bound is smaller, and second derivatives involving it are zero.
Thus $\norm{M_{ab}}_{\Frob}\le C\epsilon_n$.
For a symmetric matrix $M$,
$\Var(X_n^\top MX_n)=2\norm{M}_{\Frob}^2$.
Equation \eqref{eq:centered-hessian} proves \eqref{eq:hessian-local}.

\textbf{Uniform third derivatives in the signal block.}
Use the fixed subspace and covariance $A_\theta$ from
\eqref{eq:block-notation}. Compression preserves the relative derivative
bounds in \Cref{lem:projection}. In particular, every nonzero covariance
derivative of total order $k\le3$ in $h$ has relative operator norm
at most $C\epsilon_n^k$. A first nugget derivative is bounded by
$Cn^{-1/2}\le C\epsilon_n$, and higher covariance derivatives involving
the nugget vanish.

For $\norm{h}\le H_n$, integration of the first
relative derivative bound along the segment from $0$ to $h$, exactly as
in the proof of \eqref{eq:info-comparability}, gives
\begin{equation*}
 e^{-C\epsilon_nH_n}A_{\theta_0}
 \preceq A_{\theta_0+D_n^{-1}h}
 \preceq e^{C\epsilon_nH_n}A_{\theta_0}.
\end{equation*}
Since $\epsilon_nH_n\to0$ and
$Z_n^\top A_{\theta_0}^{-1}Z_n\sim\chi_{r_n}^2$,
\begin{equation*}
 \sup_{\norm{h}\le H_n}
 Z_n^\top A_{\theta_0+D_n^{-1}h}^{-1}Z_n=O_{\PP}(r_n).
\end{equation*}
This bounds the entire neighborhood using the same random quadratic form.

Every third derivative of
$-\frac12\log\det A_\theta-\frac12Z_n^\top A_\theta^{-1}Z_n$
is a finite sum of traces or quadratic forms formed from products of
whitened covariance derivatives of total order three. Each product has
operator norm at most $C\epsilon_n^3$. The traces are bounded by
$Cr_n\epsilon_n^3$, and the quadratic forms by
$C\epsilon_n^3Z_n^\top A_\theta^{-1}Z_n$. Therefore,
\[
 \sup_{\norm{h}\le H_n}\max_{a,b,c}
 |\partial_{h_ah_bh_c}^3\widetilde L_{n,\mathrm{signal}}(h)|
 \le C\epsilon_n^3
 \left(r_n+\sup_{\norm{h}\le H_n}Z_n^\top A_\theta^{-1}Z_n\right)
 =O_{\PP}(\epsilon_n),
\]
because $r_n=O(b_n^p)=O(\epsilon_n^{-2})$.

In the complementary block, the log likelihood is
$-\frac12d_n\log t-\norm{W_n}^2/(2t)$. Only derivatives in $h_t$
are nonzero. Its third derivative is bounded by
$Cn^{-3/2}(d_n+\norm{W_n}^2)=O_{\PP}(n^{-1/2})$ uniformly on the
local ball. The exact likelihood derivative differs by
$O_{\PP}(n^{-16})$ from this reduced likelihood derivative, by
\eqref{eq:likelihood-approx}; normalization only decreases that error.
This proves \eqref{eq:third-local}.

The interior assumption and
$H_n\norm{D_n^{-1}}_{\op}=\epsilon_nH_n\to0$ ensure admissibility of the
whole ball. Since $\nabla_hL_n(0)=\Delta_n$, Taylor's formula gives, for
$\norm h\le H_n$,
\[
 L_n(h)-L_n(0)-h^\top\Delta_n+\frac12h^\top J_nh
 =\frac12h^\top\{\nabla_h^2L_n(0)+J_n\}h
 +\frac16\sum_{a,b,c}\partial^3_{h_ah_bh_c}L_n(\bar h)\,h_ah_bh_c
\]
for some $\bar h$ on the segment from $0$ to $h$. By
\eqref{eq:hessian-local}, \eqref{eq:third-local}, and
$\sum_a|h_a|\le\sqrt3\norm h$, the right side is
$O_{\PP}(\epsilon_nH_n^2+\epsilon_nH_n^3)=O_{\PP}(\epsilon_nH_n^3)$ uniformly
on the ball, because $H_n\ge1$. This proves \eqref{eq:quadratic-remainder},
and \eqref{eq:uniform-quadratic} follows.

\textbf{Sharpness (\Cref{rem:radius}).}
Along the variance direction $h=ue_s$, the covariance is
$C(\sigma)=\sigma R_n(l_0)+t_0\Id_n$ with $\sigma=s_0+b_n^{-p/2}u$, so
$\partial_\sigma C=R_n(l_0)$ and $\partial_\sigma^2C=0$. Differentiating
$l=-\frac12\log\det C-\frac12Y_n^\top C^{-1}Y_n$ gives $\partial_\sigma^3l = -\tr\{(C^{-1}R_n)^3\}+3Y_n^\top C^{-1}(R_nC^{-1})^3Y_n$ and $\partial_\sigma^4l = 3\tr\{(C^{-1}R_n)^4\}-12Y_n^\top C^{-1}(R_nC^{-1})^4Y_n$,
%\begin{align*}
% \partial_\sigma^3l &= -\tr\{(C^{-1}R_n)^3\}+3Y_n^\top C^{-1}(R_nC^{-1})^3Y_n%,
% \quad \text{and}\quad 
% \partial_\sigma^4l = 3\tr\{(C^{-1}R_n)^4\}-12Y_n^\top C^{-1}(R_nC^{-1})^4Y_n,
%\end{align*}
with $R_n=R_n(l_0)$. At $u=0$, in terms of $X_n=C_{n,0}^{-1/2}Y_n$ and
the normalized derivative $B_s$,
$\partial_{h_s}^3L_n(0)=-\tr(B_s^3)+3X_n^\top B_s^3X_n$. Its mean is
$2\tr(B_s^3)$, and its standard deviation is
$3\sqrt2\norm{B_s^3}_\Frob\le C\norm{B_s}_{\op}^2\norm{B_s}_\Frob
\le C\epsilon_n^2$ by \Cref{lem:projection}. The matrix $B_s$ is positive
semidefinite. By the Cauchy--Schwarz inequality over its eigenvalues,
$(\tr B_s^2)^2\le\tr(B_s)\tr(B_s^3)$. Here $\tr B_s^2=2J_{n,ss}\ge2c$ by
\Cref{lem:information}. Moreover, $B_s=b_n^{-p/2}R_n(s_0R_n+t_0\Id_n)^{-1}$
has eigenvalues $b_n^{-p/2}\mu/(s_0\mu+t_0)$, where $\mu$ ranges over the
eigenvalues of $R_n$. By \eqref{eq:projection-approx} and Weyl's
inequality, at most $r_n$ of these $\mu$ exceed $n^{-20}/s_0$, so
$\tr B_s\le b_n^{-p/2}(r_n/s_0+n^{-19}/(s_0t_0))\le C\epsilon_n^{-1}$.
Hence $\tr(B_s^3)\ge c\epsilon_n$, and
$\partial_{h_s}^3L_n(0)\ge c\epsilon_n$ with probability tending to one,
after decreasing $c$.

For the fourth derivative, all matrices involved commute with $R_n$. In
an eigenbasis of $R_n$, with $Y_n$ having independent coordinates of
variances $s_0\mu_i+t_0$, each eigenvalue of $C^{-1}R_n$ is at most
$C\min(1,\mu_i)$ for $|\sigma-s_0|\le s_0/2$. Therefore
$\sup_{|\sigma-s_0|\le s_0/2}|\partial_\sigma^4l|$ is bounded by a
constant times $\sum_i\min(1,\mu_i^4)(1+\chi_i^2)$ with $\chi_i$
independent standard normal, which is $O_{\PP}(r_n)$ by the eigenvalue
count above. After normalization, the fourth derivative in $h_s$ is
$O_{\PP}(b_n^{-2p}r_n)=O_{\PP}(\epsilon_n^2)$ uniformly on
$|u|\le\kappa\epsilon_n^{-1/3}$.

Now let $u=\kappa\epsilon_n^{-1/3}$, so that $\sigma-s_0=\kappa\epsilon_n^{2/3}\to0$.
Taylor's formula in $u$, together with \eqref{eq:hessian-local}, gives
\[
 L_n(ue_s)-L_n(0)-u\Delta_{n,s}+\frac12J_{n,ss}u^2
 =O_{\PP}(\epsilon_nu^2)+\frac16\partial_{h_s}^3L_n(0)\,u^3
 +O_{\PP}(\epsilon_n^2u^4).
\]
The first and last terms are $O_{\PP}(\kappa^2\epsilon_n^{1/3})$ and
$O_{\PP}(\kappa^4\epsilon_n^{2/3})$, and the middle term is at least
$c\kappa^3/6$ with probability tending to one. This proves
\Cref{rem:radius}.
\end{proof}

\subsection{\texorpdfstring{Proof of \Cref{lem:score}}{Proof of the score CLT lemma}}\label{app:score}

\begin{proof}
Use $X_n$ and $B_a$ from the preceding proof. For a fixed
$v\in\R^3$, write
\[
 v^\top M_n^{-\top}\nabla l_n(\theta_0)
 =\frac12\{X_n^\top B_n(v)X_n-\tr B_n(v)\},
\]
where
\[
 B_n(v)=\sum_a[D_nM_n^{-1}v]_a B_a.
\]
Since $M_n^{-1}M_n^{-\top}=(M_n^\top M_n)^{-1}=\calI_n(\theta_0)^{-1}$ and
$\calI_n(\theta_0)=D_nJ_nD_n$,
\[
 (D_nM_n^{-1})(D_nM_n^{-1})^\T=D_n\calI_n(\theta_0)^{-1}D_n=J_n^{-1},
 \qquad
 \norm{D_nM_n^{-1}}_{\op}^2=\norm{J_n^{-1}}_{\op}\le C
\]
by \Cref{lem:information}, so the coefficient vector is bounded. Hence
$\norm{B_n(v)}_{\op}\le C_v\epsilon_n\to0$.
The variance is exactly
$\frac12\tr(B_n(v)^2)
=v^\top M_n^{-\top}\calI_n(\theta_0)M_n^{-1}v=\norm{v}^2$.

For $v\ne0$, diagonalize the symmetric matrix $B_n(v)$, with
eigenvalues $\lambda_{j,n}$. The quadratic form is distributed as
$\frac12\sum_j\lambda_{j,n}(\xi_j^2-1)$ for independent standard
normal variables. Its Lyapunov ratio of order four is bounded by a
constant times
\[
 \frac{\sum_j\lambda_{j,n}^4}
      {(\sum_j\lambda_{j,n}^2)^2}
 \le
 \frac{\max_j\lambda_{j,n}^2}{\sum_j\lambda_{j,n}^2}
 \longrightarrow0.
\]
The univariate central limit theorem and the Cram\'er--Wold device
give \eqref{eq:score-clt}. The same calculation applies to any bounded
sequence of coefficient vectors for $\Delta_n$ whose variances
converge to a positive limit.
\end{proof}

\subsection{\texorpdfstring{Proof of \Cref{lem:random}}{Proof of the random coverage lemma}}\label{app:random}

\begin{proof}
Let $Q$ have side length $L$, and let $A\ge1$ be a fixed constant. For
$n\ge3$, put
\[
 k_n=\left\lfloor\left(\frac{n}{A\log n}\right)^{1/p}\right\rfloor,
\]
and suppose $k_n\ge1$. Partition $Q$ into $k_n^p$ closed subcubes
$Q_1,\ldots,Q_{k_n^p}$ of side length $L/k_n$, which overlap only on
their boundaries. Each has volume $L^p/k_n^p$, so its sampling
probability satisfies
\[
 \pi_j=\PP(X_1\in Q_j)=\int_{Q_j}f(x)\,dx\ge\frac{f_-L^p}{k_n^p}.
\]
Since $X_1,\ldots,X_n$ are independent and $1-x\le e^{-x}$,
\[
 \PP(\text{no }X_i\text{ lies in }Q_j)=(1-\pi_j)^n
 \le\Bigl(1-\frac{f_-L^p}{k_n^p}\Bigr)^n
 \le\exp\Bigl(-\frac{nf_-L^p}{k_n^p}\Bigr).
\]
A union bound over the $k_n^p$ subcubes gives
\[
 \PP(\text{some }Q_j\text{ is empty})
 \le k_n^p\exp\Bigl(-\frac{nf_-L^p}{k_n^p}\Bigr).
\]
By the definition of $k_n$, $k_n^p\le n/(A\log n)$. Hence
$k_n^p\le n$, because $A\log n\ge1$, and $n/k_n^p\ge A\log n$, so that
$\exp(-nf_-L^p/k_n^p)\le\exp(-f_-L^pA\log n)=n^{-f_-L^pA}$. Therefore
\[
 \PP(\text{some }Q_j\text{ is empty})
 \le n\exp(-f_-L^pA\log n)=n^{1-f_-L^pA}.
\]
Given $q>0$, choose $A\ge\max\{1,(q+1)/(f_-L^p)\}$, so that this
probability is at most $n^{-q}$. When every subcube contains an
observation, every $x\in Q$ lies in some $Q_j$ together with an
observation, so it is within the diameter $\sqrt p\,L/k_n$ of that
observation. Since $\lfloor y\rfloor\ge y/2$ for $y\ge1$,
\[
 h_n(Q)\le\frac{\sqrt p\,L}{k_n}\le2\sqrt p\,L\Bigl(\frac{A\log n}n\Bigr)^{1/p}
\]
on this event, for all $n\ge3$ with $k_n\ge1$. This proves the
probability bound with $A_q=2\sqrt p\,LA^{1/p}$, after enlarging $C_q$ to
cover the finitely many $n$ with $n<3$ or $k_n=0$. With $q=2$, the first Borel--Cantelli lemma proves
\eqref{eq:random-fill}. Since
$(\log n/n)^{1/p}=o(n^{-\kappa})$ for $\kappa<1/p$,
the coverage conclusion follows.
\end{proof}

\section{Additional simulation details}\label{apdx:sim}

This appendix provides additional computational details and finite-sample
results for the simulations in \Cref{sec:simulation}. The code, the simulation output, and a Jupyter notebook that performs the
checks in Appendix~\ref{apdx:sim-validation} and reproduces the figures,
\Cref{tab:sim-slopes}, and the other numerical results in
\Cref{sec:simulation} and this appendix are available at \url{https://github.com/Ameer-eng/Simulation-Studies-for-Euclidean-RBF-MLE}.

\subsection{Exact evaluation of the likelihood}\label{apdx:sim-lik}

The sample sizes considered in \Cref{sec:simulation} are too large for
direct $O(n^3)$ evaluation of the Gaussian likelihood. We therefore exploit
the analytic structure of the RBF kernel and, for $p\ge2$, the Kronecker
structure of the regular grid. These calculations evaluate the same
Gaussian likelihood in \eqref{eq:likelihood}, up to numerical truncation
at machine precision.

\textbf{Feature representation.}
On $[0,1]$, let $u=(x-\tfrac12)/l$. We use the expansion
\[
 \exp\left\{-\frac{(x-x')^2}{2l^2}\right\}
 =
 \sum_{k=0}^\infty \phi_k(x)\phi_k(x'),
 \qquad
 \phi_k(x)=e^{-u^2/2}\frac{u^k}{\sqrt{k!}}.
\]
We truncate the expansion after
\[
 M(l)=\left\lceil \bar u+10\sqrt{\bar u}+25\right\rceil,
 \qquad
 \bar u=(2l)^{-2}.
\]
Over the parameter range $l\ge0.05$, the reconstructed kernel and its
lengthscale derivative agree with direct evaluation to about $10^{-14}$
and $10^{-12}$, respectively, in maximum norm. The features are evaluated recursively using
$\phi_k=\phi_{k-1}u/\sqrt{k}$ to avoid numerical over- and underflow.

Let $\Phi$ denote the $n\times M$ feature matrix. Then
\[
 R_n(l)\approx\Phi\Phi^\top,
\]
with the approximation at machine precision under the truncation above.

\textbf{Spectral form of the likelihood.}
For fixed $l$, let
\[
 R_n(l)=U\Lambda U^\top,
 \qquad
 \Lambda=\diag(\lambda_1,\ldots,\lambda_r),
\]
where eigenvalues smaller than $10^{-12}\lambda_{\max}$ are discarded.
Writing $z=U^\top Y$, the Gaussian likelihood can be evaluated as
\[
 -2l_n(\theta)
 =
 \sum_{k=1}^r
 \left\{
 \log(\sigma^2\lambda_k+\tau^2)
 +\frac{z_k^2}{\sigma^2\lambda_k+\tau^2}
 \right\}
 +(n-r)\log\tau^2
 +\frac{\norm{Y}^2-\norm z^2}{\tau^2},
\]
up to an additive constant. The quantities $\Lambda$ and $z$ are obtained
from the much smaller Gram matrix $\Phi^\top\Phi$ and the vector
$\Phi^\top Y$.

\textbf{Linear-cost Gram matrix.}
The feature products satisfy
\[
 \phi_j(x)\phi_k(x)
 =
 e^{-u^2}\frac{u^{j+k}}{\sqrt{j!\,k!}},
\]
so $\Phi^\top\Phi$ is a scaled Hankel matrix:
\[
 [\Phi^\top\Phi]_{jk}
 =
 h_{j+k}\binom{j+k}{j}^{1/2},
 \qquad
 h_m=
 \sum_{i=1}^n
 e^{-u_i^2}\frac{u_i^m}{\sqrt{m!}}.
\]
Thus, forming the Gram matrix requires $O(nM)$ rather than $O(nM^2)$
operations. The midpoint grid is symmetric around $1/2$, so the odd
moments vanish. This symmetry further reduces the computation without
changing the likelihood.

\textbf{Kronecker structure for $p\ge2$.}
For the regular grid used in the simulations,
\[
 R_n(l)=R_G(l)^{\otimes p}.
\]
Consequently, its eigenvalues are products of the one-dimensional
eigenvalues,
\[
 \lambda_{k_1}\cdots\lambda_{k_p},
\]
and projections of the observations are obtained by multiplying the data
array by the one-dimensional eigenvector matrix along each axis. Thus,
only one-dimensional quantities involving $G=n^{1/p}$ grid points need
to be explicitly constructed.

\subsection{Data generation}\label{apdx:sim-generation}

For $p=1$, data are generated as
\[
 Y=\sigma_0\Phi\xi+\tau_0\varepsilon,
\]
where $\xi\sim N_M(0,I_M)$ and $\varepsilon\sim N_n(0,I_n)$ are
independent and $\Phi$ is evaluated at $l_0$. Therefore,
\[
 \Cov(Y)
 =
 \sigma_0^2\Phi\Phi^\top+\tau_0^2I_n
 =
 C_n(\theta_0)
\]
up to the machine-precision truncation described above.

For $p\ge2$, the feature matrix is replaced by its Kronecker product and
$\xi$ by an array of independent standard normal variables. This generates
a new independent realization of both the GP and measurement errors for
each Monte Carlo replicate.

\subsection{Computation of the MLE}\label{apdx:sim-mle}

The MLE is computed over
\[
 \Theta
 =
 [10^{-2},10^2]\times[0.05,2]\times[10^{-4},1]
\]
for $(\sigma^2,l,\tau^2)$.

For a fixed $l$, we optimize over
$(\log\sigma^2,\log\tau^2)$ using L-BFGS-B with analytic gradients,
relative function tolerance $10^{-15}$, and gradient tolerance $10^{-10}$.
Because this two-dimensional problem can have more than one local
maximum, L-BFGS-B is run from two starting values, one with $\sigma^2$ at
its lower bound and one with $\sigma^2$ at a moment estimate based on the
leading eigen-directions of $R_n(l)$, and the better solution is kept.

The profile likelihood in $l$ can also have more than one local maximum.
It is therefore first evaluated at $25$ log-equispaced values of $l$ in
$[0.05,2]$. Every local maximum on this grid is then refined by a bounded
one-dimensional Brent search in $\log l$ between its two neighboring grid
values, with tolerance $10^{-7}$, using the solution at the grid value as
an additional starting value for the inner optimization. The MLE is the
point with the largest likelihood among the grid values and the refined
local maxima.

Among the $2.7\times10^4$ fitted data sets, only three estimates occur on
the boundary of $\Theta$: one estimate of $l$ at its upper bound for
$p=1$, $n=10^2$, and two estimates of $\tau^2$ at its lower bound for
$p=3$, $n=125$.

For the three data sets with the largest $\widehat\sigma^2$ at each
$p=1$ sample size from $10^2$ to $3\times10^4$, we additionally evaluated
the profile likelihood on a fine grid of $400$ lengthscales. In all $18$
cases, no grid value gave a larger likelihood than the computed MLE,
although in four cases the profile likelihood had a second, lower local
maximum. Thus, these extreme estimates arise from the likelihood itself
rather than numerical optimization failure.

\subsection{Computation of the Fisher information}
\label{apdx:sim-information}

For each $n$, the Fisher information $\calI_n(\theta_0)$ in
\eqref{eq:fisher} is computed directly from the observation design.
Let $V$ be an orthonormal basis for the span of $\Phi$ and
$\partial_l\Phi$ at $l_0$. Both $R_n$ and $\dot R_n$ vanish, up to the
machine-precision truncation, on the orthogonal complement of this
space, where $C_n$ acts as $\tau_0^2I$.

The traces in \eqref{eq:fisher} therefore reduce to calculations involving
$V^\top R_nV$ and $V^\top\dot R_nV$, together with the contribution
\[
 \frac{n-\dim(V)}{2\tau_0^4}
\]
to the nugget information. For $p\ge2$, the Kronecker structure again
reduces these calculations to one-dimensional quantities.

Because the resulting computation depends on $n$ primarily through
one-dimensional Gram matrices, we can evaluate the Fisher-information
benchmark beyond the range used for Monte Carlo simulation. We evaluate
it up to $n=10^8$ for $p=1$ and $n=10^{12}$ for $p=2,3$. These extended
calculations are used below to examine how slowly the finite-sample
information approaches the limiting rates in \Cref{thm:rbf}.

\subsection{Monte Carlo summaries}\label{apdx:sim-summaries}

For each parameter, the main text reports the RMSE, the robust standard
deviation $\mathrm{IQR}/1.349$, and the asymptotic standard deviation
obtained from the Fisher information.

For a normal random variable with variance $v$,
\[
 \mathrm{IQR}
 =
 2\Phi^{-1}(0.75)\sqrt v
 \approx1.349\sqrt v.
\]
Thus, $\mathrm{IQR}/1.349$ estimates the standard deviation under
normality. Unlike the RMSE, it depends only on the central half of the
sampling distribution and is therefore less sensitive to a small number
of extreme estimates. Agreement between the RMSE and robust standard
deviation provides an additional indication that the estimation error is
approximately normal.

Bootstrap $95\%$ intervals for the RMSE and robust standard deviation are
computed using $2000$ bootstrap resamples of the $1000$ Monte Carlo
replicates.

\subsection{Additional normality diagnostics}
\label{apdx:sim-normality}

The Q--Q plots in \Cref{fig:sim-p1,fig:sim-p2,fig:sim-p3} provide the
primary graphical assessment of asymptotic normality. Additional numerical
diagnostics lead to the same conclusion.

For $p=3$ and $n=10^6$, the standardized errors of
$\log\widehat\sigma^2$ and $\log\widehat l$ have skewness $-0.05$ and
$0.00$, respectively, and their Kolmogorov--Smirnov distances from
$N(0,1)$ are below $0.02$. Consistent with these diagnostics, the RMSE
and robust standard deviation are nearly identical.

Convergence is slower for $p=1$. The ratio of the RMSE to the
Fisher-information standard deviation remains above one over the simulated
range. For $\log\widehat\sigma^2$, the excess decreases from $18\%$ at
$n=10^2$ to $10\%$ at $n=10^6$; for $\log\widehat l$, it decreases from
$54\%$ to $26\%$. The robust standard deviation is closer to the
Fisher-information benchmark, with deviations between $4\%$ and $20\%$.
Because these ratios continue to decrease with $n$, the Monte Carlo slopes
for $p=1$ over the simulated range are steeper than the corresponding
slopes of the Fisher-information standard deviation.

\subsection{Finite-sample behavior of the nugget}
\label{apdx:sim-nugget}

The particularly rapid convergence of the nugget estimator can be
understood from the effective rank of the RBF kernel matrix. By
\Cref{lem:projection}, the signal component has numerical rank
$O(b_n^p)$. The remaining directions are approximately pure noise with
variance $\tau^2$. Since $b_n^p$ grows much more slowly than $n$, most
directions eventually contribute almost exclusively to estimation of the
nugget.

This explains the limit
\[
 \sqrt n(\widehat\tau^2-\tau_0^2)
 \Longrightarrow
 N(0,2\tau_0^4):
\]
asymptotically, the nugget behaves approximately like a variance estimated
from $n$ independent $N(0,\tau_0^2)$ observations.

The same interpretation explains the deviations at the smallest sample
sizes. The approximation becomes accurate only when $n$ is large relative
to the number of signal directions. For example, for $p=3$ and $n=125$,
$96$ of the $125$ eigen-directions of $R_n(l_0)$ have signal variance
$\sigma_0^2\lambda_k$ exceeding the noise variance $\tau_0^2$. Thus,
relatively few directions behave as nearly pure noise, and the nugget
estimator is farther from its asymptotic regime.

\subsection{Finite-sample behavior of the spatial parameters}
\label{apdx:sim-spatial}

The limiting rates for $\sigma^2$ and $l$ are powers of
\[
 b_n=\frac{\log n}{\log\log n},
\]
which grows extremely slowly. Over the entire simulated range
$10^2\le n\le10^6$, $b_n$ increases only from approximately $3.0$ to
$5.3$. As a result, finite-sample slopes need not be close to their
limiting exponents even when the Monte Carlo errors already closely track
the Fisher-information benchmark.

\textbf{Lower-order terms in the effective dimension.}
The proof suggests approximately
\[
 N_p(m)=\binom{m+p}{p}
\]
recoverable Taylor coefficients through degree $m\asymp b_n$. The
limiting rate for $\sigma^2$ uses
\[
 N_p(m)\sim\frac{m^p}{p!},
\]
but the lower-order terms are substantial when $m$ is only around $3$--$5$.
For example,
\[
 \frac{d\log N_2(m)}{d\log m}=1.55
\]
at $m=5$, rather than its limiting value $2$. Since the variance of
$\widehat\sigma^2$ is of order $N_p(m)^{-1}$, this produces a shallower
finite-sample slope than the limiting value $-p/2$.

The exact Fisher-information calculations show this effect directly.
Near $n=10^6$, the local slopes of the asymptotic standard deviation
against $b_n$ are $-0.37$, $-0.74$, and $-1.13$ for $\sigma^2$ when
$p=1,2,3$, compared with the limiting values $-0.5$, $-1$, and $-1.5$.
The convergence is extremely slow: the corresponding slopes reach only
$-0.39$ at $n=10^8$ for $p=1$ and $-0.82$ and $-1.22$ at $n=10^{12}$
for $p=2,3$. For $p=2,3$, the Monte Carlo slopes in
\Cref{tab:sim-slopes} closely reproduce these finite-$n$
Fisher-information slopes.

The behavior of the lengthscale is somewhat different because its
information is weighted toward the highest recoverable degrees. Near
$n=10^6$, the local Fisher-information slopes are $-1.57$, $-2.06$, and
$-2.57$ for $p=1,2,3$, already close to the limiting values
$-(p+2)/2$. At smaller sample sizes they are steeper; for example, the
local slopes between $n=10^2$ and $10^3$ are approximately $-2.57$ for
$p=2$ and $-4.46$ for $p=3$. This explains why slopes fitted over the
entire simulated range can be steeper than the limiting exponent.

\textbf{The Fisher information is the relevant finite-sample benchmark.}
\Cref{thm:rbf} determines the orders of the Fisher information but does
not require $J_n$ to converge. Consequently, the limiting powers of
$b_n$ describe the eventual rate but need not provide an accurate
finite-$n$ approximation. The design-specific quantity
\[
 [\calI_n(\theta_0)^{-1}]_{jj}^{1/2}
\]
is therefore the appropriate benchmark for the Monte Carlo errors at a
given $n$. The close agreement between this quantity and the simulated
errors for $p=2,3$ shows that the MLE is already well described by the
Fisher approximation even though the Fisher information itself has not
yet reached its limiting power law.

\textbf{The one-dimensional case.}
The normal approximation is slower for $p=1$ because the effective number
of signal directions remains small. Over $10^2\le n\le10^6$, only
approximately $7$--$11$ eigen-directions have signal variance exceeding
the noise variance. Thus, $\widehat\sigma^2$ behaves roughly like a
variance estimator based on a small number of independent Gaussian
components, leading to visible skewness on the original scale.

The log transformation removes much of this skewness. The remaining
tail behavior is associated with the strong positive dependence between
$\widehat\sigma^2$ and $\widehat l$ along the likelihood ridge. As noted
in \Cref{apdx:sim-mle}, direct evaluation of the profile likelihood for
the extreme cases confirms that these estimates are genuine likelihood
maximizers rather than numerical optimization failures.

Correspondingly, at $n=10^6$ the asymptotic standard deviations of
$\log\widehat\sigma^2$ are $0.59$, $0.26$, and $0.12$, and those of
$\log\widehat l$ are $0.059$, $0.022$, and $0.010$, for $p=1,2,3$,
respectively. The larger number of informative directions in higher
dimensions also helps explain why the normal approximation improves from
$p=1$ to $p=3$.

\subsection{Validation against dense computations}
\label{apdx:sim-validation}

For designs small enough to permit direct $O(n^3)$ linear algebra, we
compared the fast implementation with dense calculations that form the
full $n\times n$ covariance matrix. The comparisons used $n=300$, $1000$,
and $4000$ for $p=1$, $G=20$ for $p=2$, and $G=6$ for $p=3$.

The log likelihoods evaluated at four parameter values, including
$\theta_0$, agreed to within $10^{-6}$, a relative error below $10^{-9}$.
For $n\le1000$, the MLEs agreed with those obtained by direct
Nelder--Mead optimization of the dense likelihood to at least five
significant digits. The Fisher information matrices agreed to relative
error below $10^{-13}$. Finally, for designs with $n=40$, $36$, and $64$
points for $p=1,2,3$, the empirical covariance matrices
computed from $2\times10^4$ independently generated data sets
agreed with $C_n(\theta_0)$ within Monte Carlo error.

These checks confirm that the feature and Kronecker calculations used in
the simulations reproduce the exact Gaussian likelihood and Fisher
information to numerical precision.

\subsection{Monte Carlo implementation and software}
\label{apdx:sim-software}

Each Monte Carlo replicate uses an independent random seed generated from 
\[\texttt{numpy.random.SeedSequence((20261003, p))}.\] Consequently, the
results do not depend on the number of parallel workers, and any
individual replicate can be regenerated on its own.

Computations used Python 3.13 with NumPy 2.4 and SciPy 1.18 on a laptop
with a four-core (eight-thread) Intel Core i7-1165G7 processor. The total
computation time was approximately $8.5$ hours:
about $8$ hours for $p=1$, dominated by the cases
$n\ge3\times10^5$, approximately $15$ minutes for $p=2$, and
approximately $20$ minutes for $p=3$. For $p\ge2$, a single MLE fit at
$n=10^6$ requires approximately one second.

\subsection{Results on the original scale}
\label{apdx:sim-original}

\Cref{fig:sim-raw-p1,fig:sim-raw-p2,fig:sim-raw-p3} repeat the simulation
results in \Cref{fig:sim-p1,fig:sim-p2,fig:sim-p3} for
$(\sigma^2,l,\tau^2)$ on the original parameter scale. The reference
rates are unchanged, and the asymptotic standard deviation is
$[\calI_n(\theta_0)^{-1}]_{jj}^{1/2}$ rather than its log-scale
counterpart
$[\calI_n(\theta_0)^{-1}]_{jj}^{1/2}/\theta_{0j}$.

The conclusions concerning the convergence rates are unchanged. The
main visible difference is greater right skewness of
$\widehat\sigma^2$, particularly for $p=1$, and also of $\widehat l$ in
the one-dimensional case. This skewness increases the RMSE relative to
the robust standard deviation and is substantially reduced by the log
transformation, consistent with the discussion in
\Cref{sec:simulation}.

\renewcommand{\thefigure}{D\arabic{figure}}
\renewcommand{\theHfigure}{D\arabic{figure}}
\setcounter{figure}{0}

\begin{figure}[!h]
\centering
\includegraphics[width=\textwidth]{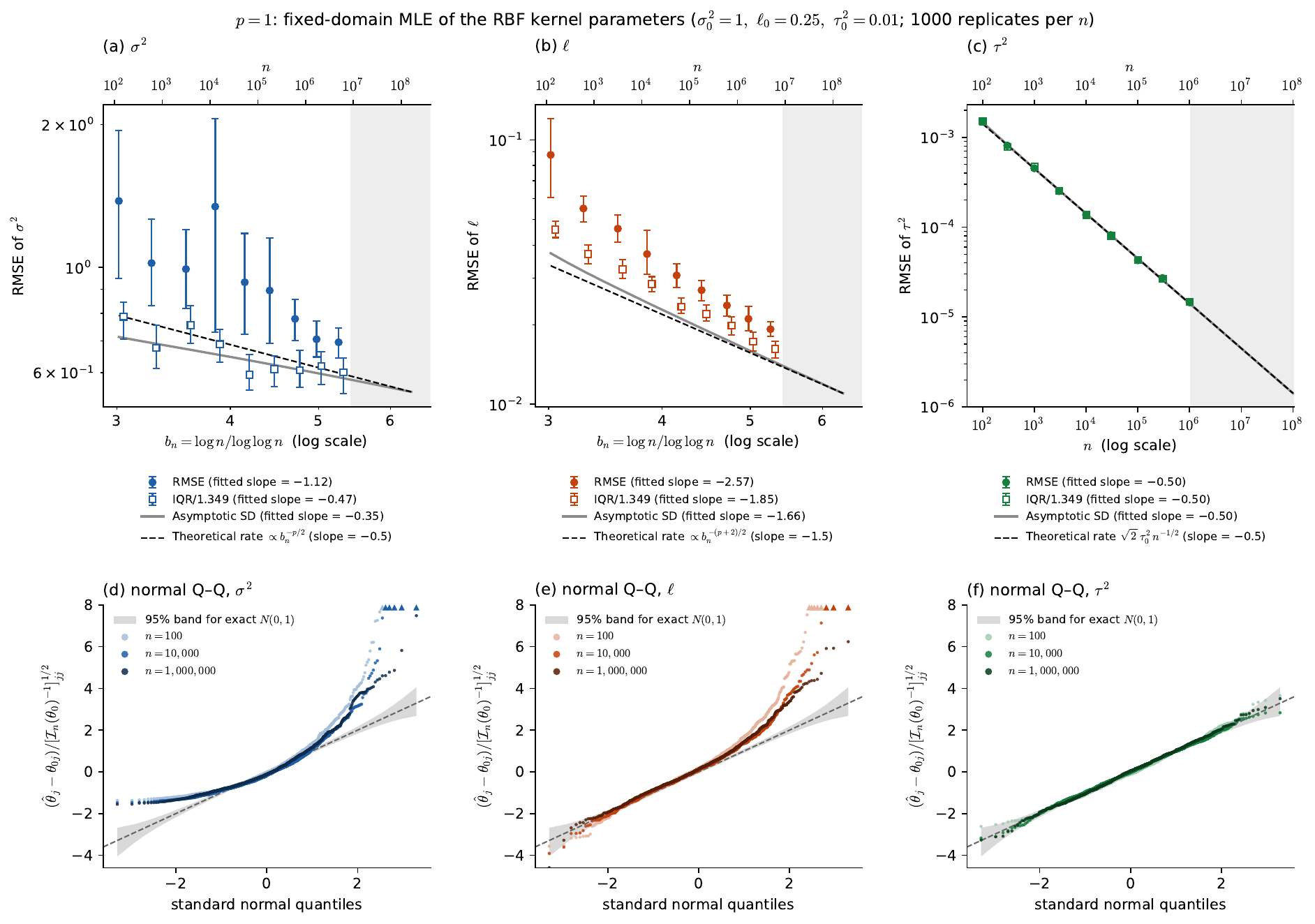}
\caption{Simulation results for $p=1$ on the original parameter scale,
with the layout and graphical elements as in \Cref{fig:sim-p1}.}
\label{fig:sim-raw-p1}
\end{figure}

\begin{figure}[!h]
\centering
\includegraphics[width=\textwidth]{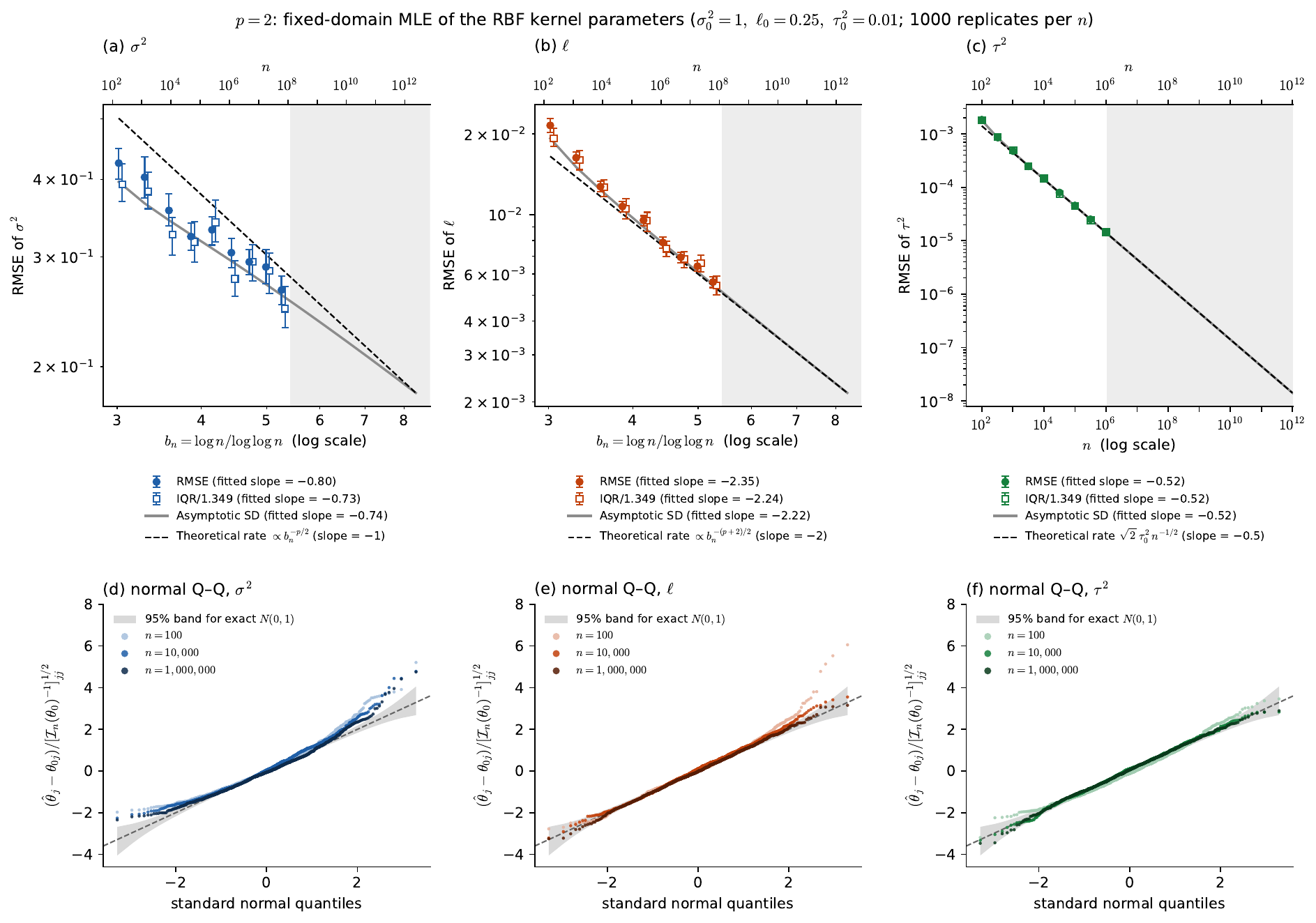}
\caption{Simulation results for $p=2$ on the original parameter scale,
with the layout and graphical elements as in \Cref{fig:sim-p1}.}
\label{fig:sim-raw-p2}
\end{figure}

\begin{figure}[!h]
\centering
\includegraphics[width=\textwidth]{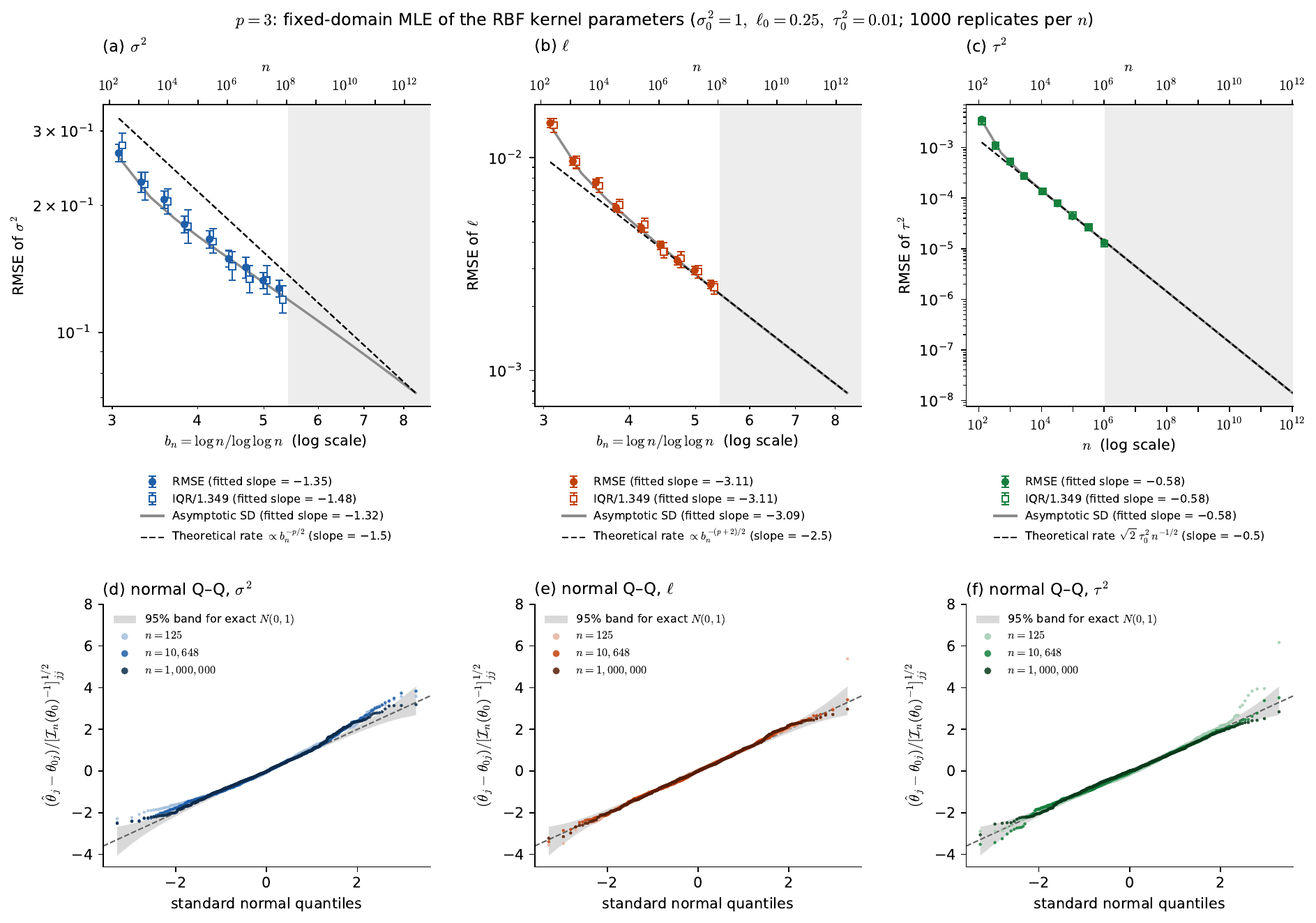}
\caption{Simulation results for $p=3$ on the original parameter scale,
with the layout and graphical elements as in \Cref{fig:sim-p1}.}
\label{fig:sim-raw-p3}
\end{figure}

\section*{Disclosures} The authors interacted with GPT-6 and Claude Opus 5.5 to enhance mathematical clarity with proofs and presentation of the manuscript, and Claude Opus 5.5 to assist with coding the simulation experiments (available from \url{https://github.com/Ameer-eng/Simulation-Studies-for-Euclidean-RBF-MLE}). The authors independently derived all results and take full responsibility for the content of this paper.

\bibliographystyle{plainnat}
\bibliography{ref}

\end{document}